\newcommand{\maxi}{\mathfrak{M}}
\newcommand{\mini}{\mathscr{M}}
\newtheorem{thm}{Theorem}
\newtheorem{example}{Example}
\newtheorem{prop}{Proposition}[section]
\newtheorem{lemma}[prop]{Lemma}
\newtheorem{cor}[prop]{Corollary}
\newtheorem{rmk}[prop]{Remark}
\newtheorem{main}{Theorem}
\newtheorem{defi}[prop]{Definition}
\newcommand{\proof}[1][]{{\it Proof#1: }}
\newcommand{\qed}[1][3mm]{\hspace*{\fill} $\Box$ \vspace{#1}}
\newcommand{\Tg}{\mathcal{T}_g}
\newcommand{\ZZ}{{\mathbf Z}}
\newcommand{\CC}{{\mathbf C}}
\newcommand{\PP}{{\mathbf P}}
\newcommand{\FF}{{\mathbb F}}
\newcommand{\homeo}{\mathcal{H}omeo}
\newcommand{\bff}{\mathbb{F}}
\newcommand{\s}{\sigma}
\newcommand{\tto}{\longrightarrow}
\newcommand{\surj}{\to\!\!\!\!\!\!\!\!\tto}
\newcommand{\inv}{^{^{-1}}}
\newcommand{\dfami}{{\cal D}}
\newcommand{\hfami}{{\cal H}}
\newcommand{\ofami}{{\cal O}}
\newcommand{\tfami}{{\cal T}}
\newcommand{\map}{\mathcal{M}ap}
\newcommand{\cutoff}[1]{}
\newcommand{\Xmg}{X_{M,g}}
\newcommand{\Xmgp}{X_{M,g}'}
\newcommand{\omg}{\Omega^{M}_{2,g+2}}
\newcommand{\calx}{\mathcal{X}}
\newcommand{\cald}{\mathcal{D}}
\newcommand{\ko}{\mathcal{O}}
\newcommand{\ra}{\rightarrow}
\begin{document}

\title{Mapping Class Groups of Trigonal Loci.}



\author{Michele Bolognesi and Michael L\"onne}

\maketitle

\begin{abstract}
In this paper we study the topology of the stack $\Tg$ of smooth trigonal curves of genus $g$, over the complex field. We make use of a construction by the first named author and Vistoli, that describes $\Tg$ as a quotient stack of the complement of the discriminant. This allows us to use techniques developed by the second named author to give presentations of the orbifold fundamental group of $\Tg$, of its substrata with prescribed Maroni invariant and describe their relation with the mapping class group $\map_g$ of Riemann surfaces of genus $g$.
\end{abstract}


\section{Introduction}

The theory of moduli spaces is one of the most charming subjects of algebraic geometry. Already at the very first stages of its development, it seemed clear that a good way to construct \it spaces \rm that would solve in some sense a moduli problem was to display them as quotients by group actions. 
As everybody learns in a first course of GIT, taking quotients is a delicate operation in algebraic geometry, but a good solution, at least in moduli theory, has been given by the theory of quotient stacks. If an algebraic stack $S$ is a quotient stack $[X/G]$, where $G$ is an algebraic group acting on an algebraic variety $X$, then its geometry is very much related to the action of $G$ on $X$.
The first example that comes to mind is the stack $\mathcal{M}_{1,1}$ of elliptic curves, which is a quotient $[X/\mathbb{G}_m]$, where $X$ is the complement in $\mathbb{A}^2$ of the discriminant
hypersurface $4x^3 + 27y^2 = 0$, and $\mathbb{G}_m$ acts with weights 4 and 6.
Very powerful techniques have been developed in equivariant intersection theory, after the landmark work of Edidin and Graham \cite{edidin-graham}, and applications have flourished (e.g. \cite{vistoli-m2,arsie-vistoli,viviani-fulghesu}) in equivariant intersection theory.

\medskip

The goal of this paper is to explore a particular quotient stack, the stack of smooth trigonal curves, under the somewhat different light of homotopy groups.
This stack has been constructed in \cite{bolognesi-vistoli} and it has a presentation as a quotient stack $[X'/\Gamma_g]$, where $\Gamma_g$ is a certain algebraic group, and $X'$ is an open set inside the total space of a vector bundle over an open subset of a representation of $\Gamma_g$ (see Sect. \ref{sect-trigostack} for more details). In particular we will concentrate on the study of the orbifold fundamental group (see Sect. \ref{sect-orbifund} for a detailed definition) of the stack $\Tg$.

\medskip

The universal family of curves over $\Tg$ has a structure of fibre bundle
for the group $\homeo_g$ of orientation preserving homeomorphisms of a Riemann surface of genus $g$. This allows us to define a \it monodromy map \rm 

$$\mu_g^{\mathcal{T}}: \pi_1^{orb}(\Tg,x_0) \to \pi_0(\homeo_g)=\map_g,$$

given a base point $x_0\in \Tg$.
The analogous map for the stack of hyperelliptic curves maps
to the proper subgroup of hyperelliptic mapping classes.
But in Theorem \ref{surjmontrigo} we show the following quite surprising statement.

\begin{main}
The monodromy map $\mu_g^{\mathcal{T}}$ is surjective.
\end{main}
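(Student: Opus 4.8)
The plan is to exploit the explicit presentation of $\pi_1^{orb}(\Tg)$ coming from the quotient description $[X'/\Gamma_g]$ together with the braid-monodromy techniques recalled above, and to track the images of its generators under $\mu_g^{\mathcal{T}}$. Since $X'$ is the complement of the discriminant $\cald$ inside (an open subset of) the total space of the relevant vector bundle, a van Kampen / braid-monodromy computation expresses the generators of $\pi_1^{orb}(\Tg)$ as meridians around the irreducible components of $\cald$, corrected by the $\Gamma_g$-action. The decisive feature is that a meridian around the component parametrising trigonal curves with a single node is sent by $\mu_g^{\mathcal{T}}$, via Picard--Lefschetz, to the Dehn twist about the corresponding vanishing cycle on a reference fibre.

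First I would fix a smooth reference trigonal curve $C_0 \to \mathbb{P}^1$, presented as a triple cover simply branched over $2g+4$ points (Riemann--Hurwitz), and make the $\homeo_g$-bundle structure explicit near $C_0$. Using the triple-cover picture I would draw, on the topological surface of genus $g$, the simple closed curves arising as vanishing cycles when two branch points carrying the \emph{same} transposition collide. Here the monodromy of the cover takes values in $S_3$, and there are now three types of transposition, $(12),(13),(23)$; collisions of branch points sharing a label produce several families of vanishing cycles, mutually transverse, rather than assembling into a single chain. This is the structural contrast with the hyperelliptic case, where the unique transposition in $S_2$ forces all vanishing cycles into the one chain $c_1,\dots,c_{2g+1}$, whose twists generate only the hyperelliptic mapping class group.

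The step I expect to be the main obstacle is to show that the Dehn twists about the realizable vanishing cycles generate all of $\map_g$, and not merely a proper subgroup. The strategy is to isolate, among these cycles, a configuration already known to generate: for instance a Humphries-type system of $2g+1$ curves, or a chain together with one extra curve meeting it in the correct pattern so that Lickorish's generation criterion applies. This demands careful bookkeeping of which pairs of adjacent branch points may be made to collide compatibly with the trigonal monodromy, together with a computation of the geometric intersection numbers of the resulting cycles; surjectivity of $\mu_g^{\mathcal{T}}$ onto $\map_g$ then follows from the classical fact that such a configuration of Dehn twists generates the entire mapping class group.

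Finally, I would package the argument so that it runs uniformly in $g$: once the generating configuration is exhibited on the reference curve $C_0$, one checks that the image of $\mu_g^{\mathcal{T}}$ contains the corresponding twists, and surjectivity is immediate. As a sanity check, applying the same recipe to the degree-$2$ cover should reproduce exactly the hyperelliptic chain, recovering the known proper-subgroup statement and pinpointing where the degree-$3$ geometry supplies the additional, transverse vanishing cycles responsible for surjectivity.
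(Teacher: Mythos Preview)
Your proposal is a plan rather than a proof: you correctly identify that meridians around the discriminant map to Dehn twists about vanishing cycles, but you leave the decisive step --- showing that the resulting collection of twists generates all of $\map_g$ --- as an unresolved ``main obstacle''. Without that step actually carried out, nothing is proved; and carrying it out is not routine bookkeeping, since the $S_3$-monodromy constraints on which branch points may collide make the combinatorics of the vanishing cycles genuinely delicate (this is essentially the content of the Birman--Wajnryb and Hilden analysis of liftable mapping classes for simple triple covers).

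The paper's argument takes a different and shorter route that avoids this combinatorial step entirely. Rather than working inside $X'_g$, it exhibits a single family with surjective monodromy and appeals to the universality of $\mu_g^{\mathcal T}$ (Proposition~\ref{tauto}). Concretely, it passes to the Hurwitz space $\mathcal H^3_g$ of simple triple covers of $\PP^1$, identifies it with an \'etale cover $\widetilde{\mathcal U}$ of the configuration space of $2g+4$ points on $\PP^1$ cut out by the subgroup of \emph{liftable} mapping classes, and then invokes Hilden's theorem \cite[Thm.~4]{hilden}: for a simple $3$-sheeted branched cover $C\to S^2$, every homeomorphism of $C$ is isotopic to the lift of a homeomorphism of $S^2$. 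This gives an isomorphism $\pi_0(\homeof_{0,2g+4})\cong\map_g$, and surjectivity of $\mu_g^{\mathcal T}$ follows immediately from the surjectivity of the sphere braid monodromy onto $\map_{0,2g+4}$. In effect, Hilden's theorem is precisely the black box that does the work you were proposing to do by hand; your approach could in principle be completed, but it would amount to reproving that theorem.
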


Now recall that $\Tg$ admits a classical stratification in terms of Maroni invariant \cite{maroni}. Let $\Tg^M$ denote the stratum with Maroni invariant equal to $M$. Then one can ask what kind of map does the natural inclusion $\Tg^M\hookrightarrow \Tg$ induce on the orbifold fundamental groups. The answer to this question is Theorem \ref{isomaroni}.

\begin{main}
If $M<g/3-1$, the inclusion 
$\Tg^M \hookrightarrow \Tg$ induces a surjection

$$\pi_1^{orb}(\Tg^M) \surj \pi_1^{orb}(\Tg)$$

with kernel a homomorphic image of $\ZZ/(M)$.
\end{main}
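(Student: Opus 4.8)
The plan is to derive everything from the two quotient presentations together with the right-exact homotopy sequence that computes the orbifold fundamental group of a global quotient by a connected group. Recall the presentation $\Tg=[X'/\Gamma_g]$ with $X'$ the complement of the discriminant inside the total space of the relevant $\Gamma_g$-representation. The Maroni invariant is an upper-semicontinuous, $\Gamma_g$-invariant function on $X'$, so the locus $X'_M$ on which it equals $M$ is a $\Gamma_g$-invariant locally closed subvariety and $\Tg^M=[X'_M/\Gamma_g]$, with the extra automorphisms of the degenerate scroll appearing as the enlarged $\Gamma_g$-stabilizers of its points. Since $\Gamma_g$ is connected, the Borel fibration $X'\to X'\times_{\Gamma_g}E\Gamma_g\to B\Gamma_g$ gives
$$\pi_1(\Gamma_g)\to\pi_1(X')\to\pi_1^{orb}(\Tg)\to1,$$
and the analogous sequence for $X'_M$. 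The inclusion $\Tg^M\hookrightarrow\Tg$ organizes these into a commutative ladder with identical left-hand groups $\pi_1(\Gamma_g)$. First I would establish this ladder and reduce the whole statement to understanding the map $\pi_1(X'_M)\to\pi_1(X')$ induced by the inclusion of the stratum, together with the position of $\pi_1(\Gamma_g)$ in each group.

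Second, I would feed both $\pi_1(X'_M)$ and $\pi_1(X')$ into the braid-monodromy machinery of the second author, which presents the fundamental group of such a discriminant complement by geometric (meridian) generators subject to relations read off a generic pencil. The point is that the inclusion of the stratum becomes visible on generators: each meridian of the big discriminant is represented by a small loop around a smooth, transverse degeneration that can already be realized inside $X'_M$, provided the stratum is rich enough to meet a Lefschetz pencil in the expected non-degenerate way. This is where $M<g/3-1$ enters: in this range $X'_M$ is non-empty, irreducible, of the expected codimension, and meets a generic pencil so that its braid monodromy surjects onto that of $X'$. Granting this, $\pi_1(X'_M)\to\pi_1(X')$ is onto, and a short chase in the ladder yields surjectivity of $\pi_1^{orb}(\Tg^M)\surj\pi_1^{orb}(\Tg)$.

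Third, I would identify the kernel. By the ladder it is the image in $\pi_1^{orb}(\Tg^M)$ of $\ker\bigl(\pi_1(X'_M)\to\pi_1(X')\bigr)$, modulo what is already trivialized by $\pi_1(\Gamma_g)$. Geometrically this kernel is carried by the meridians of the Maroni-jump locus, i.e. the loops in $X'_M$ that bound once one is allowed to leave the stratum into the more balanced curves of $X'$. I expect this subgroup to be cyclic, generated by a single such meridian $\mu$, because the normal geometry of $X'_M$ in $X'$ along its generic point is one-dimensional up to the group action. The order is then forced by the stack structure: a suitable $\mathbb{G}_m\subset\Gamma_g$ (the scaling of the scroll parameter) acts on this normal direction with weight $M$, so that $\mu^{M}$ is identified with a loop coming from $\pi_1(\Gamma_g)$ and is trivial in $\pi_1^{orb}(\Tg^M)$. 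This exhibits the kernel as a homomorphic image of $\ZZ/(M)$.

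The main obstacle is the precise kernel computation in the third step: proving that the kernel is generated by the single meridian $\mu$ and nothing more, so that it is cyclic, and that the weight of the $\mathbb{G}_m$-action on the normal direction is exactly $M$, which yields the relation $\mu^{M}=1$ but no a priori smaller one. The braid-monodromy surjectivity of the second step is the other delicate input, but it is of a standard Lefschetz/Zariski-van Kampen type once the bound $M<g/3-1$ is in force; by contrast the kernel statement requires tracking the exact $\Gamma_g$-weights through the Bolognesi-Vistoli model, and this is the genuinely model-specific part of the argument.
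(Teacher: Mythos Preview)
Your ladder of right-exact sequences from the Borel construction is exactly the scaffolding the paper uses, and your instinct that everything reduces to comparing the middle columns together with the images of the group's fundamental group is correct. But from that point on the paper takes a substantially different route, and your Step~3 as written does not go through.

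\textbf{What the paper actually does.} Rather than comparing $\pi_1(X'_M)\to\pi_1(X'_g)$ directly, the paper passes to a \emph{single fibre} $F'_M$ of $X'_g\to\Omega_{2,g+2}$ over a point with Maroni invariant $M$, and replaces the acting group by the stabiliser $\Gamma_M\subset\Gamma_g$ of that point. One checks $[F'_M/\Gamma_M]\simeq[X'_M/\Gamma_g]$ on orbifold $\pi_1$. The crucial technical input is that the inclusion $F'_M\hookrightarrow X'_g$ induces an \emph{isomorphism} on $\pi_1$, not merely a surjection. This is proved by a Zariski--Lefschetz reduction to a generic plane section of the discriminant, together with the observation that the Pl\"ucker characteristics of that plane curve depend only on $g$ and not on $M$; the hypothesis $M<g/3-1$ is exactly the condition $c(g,M)\ge 3$ needed for this Pl\"ucker argument (and for a result of Shimada on irreducibility of the cuspidal stratum of the dual). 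With the middle vertical an isomorphism, the kernel of $\pi_1^{orb}(\Tg^M)\to\pi_1^{orb}(\Tg)$ is the image of $\pi_1(\Gamma_g)$ modulo the image of $\pi_1(\Gamma_M)$, i.e.\ a quotient of $\operatorname{coker}\bigl(\pi_1(\Gamma_M)\to\pi_1(\Gamma_g)\bigr)$. That cokernel is computed by an explicit matrix: the map $\ZZ^2\to\ZZ^2$ given by $\left(\begin{smallmatrix}-m&-n\\ m+1&n+1\end{smallmatrix}\right)$ has determinant $n-m=M$, whence the $\ZZ/(M)$.

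\textbf{Where your proposal breaks.} Your Step~3 models the kernel as generated by a ``meridian of the Maroni-jump locus'' with a $\mathbb G_m$-weight~$M$ on the normal direction. This picture is off in two ways. First, $X'_M$ sits inside $X'_g$ as a higher-codimension locally closed subvariety (the open stratum is the one of \emph{minimal} Maroni invariant), so the inclusion is not that of a divisor complement and there is no meridian in the usual sense; the kernel of $\pi_1(X'_M)\to\pi_1(X'_g)$ is rather governed by $\pi_1(\Omega^M_{2,g+2})$ via the fibration, and you give no control over that. Second, the $\ZZ/(M)$ in the paper does not arise from a normal-bundle weight at all but from comparing $\pi_1$ of the full symmetry group $\Gamma_g$ with $\pi_1$ of the stabiliser $\Gamma_M$; your ladder keeps $\Gamma_g$ on both rows and therefore never sees this discrepancy. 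In short, the missing idea is the switch to the fibre-with-stabiliser presentation, which converts the kernel question from a geometric one about strata into an elementary computation with $\pi_1$ of linear groups.
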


As a corollary of this theorem we obtain that the restriction of the monodromy map $\mu_g^{\Tg}$ to $\Tg^M$ is still surjective, except for maximal Maroni invariant and $g\equiv 1 (mod\ 3)$.

\medskip

Finally we derive an explicit finite presentation for the orbifold
fundamental group of the maximal Maroni stratum $\Tg^{\maxi}$.
Recall that Dolgachev and Libgober \cite{dolga-ligbo}
posed the vastly open problem to determine the fundamental
group of the discriminant complement of any (complete) linear
system.

They handle the case of linear systems of elliptic curves on $\PP^2$
and $\PP^1\times\PP^1$ as well as linear systems on curves, 
but actually the first result of that kind is due to Zariski
who considered the complete linear systems on $\PP^1$, which
he showed to have fundamental group given by the braid group
with one additional relation.

Libgober \cite{lib-cubic} and Looijenga \cite{loo-artin}
later considered orbifold fundamental groups which they showed
to be natural quotients of groups finitely presented of Artin type.

Building on the results of \cite{loenne-duke} and \cite{loenne-weierstrass}, we manage to obtain a presentation which
again follows that pattern:

\begin{main}
The orbifold fundamental group 
$\pi_1^{orb}(\Tg^{\maxi})$ of the maximal Maroni stratum, in the case $g \equiv 1\ (\mod\ 3)$ has a presentation in terms of generators $t_1,\dots, t_{2g+2}$, and relations:

\begin{itemize}
\item of "diagram type"
\begin{eqnarray*}
t_it_jt_i = t_jt_it_j & \text{if }\ j=i+1\ \text{ or }\ j=i+2;\\
t_it_j=t_jt_i & otherwise, 
\end{eqnarray*}

and

\begin{equation*}
(t_it_jt_i^{-1})t_k=t_k(t_it_jt_i^{-1})\ \text{ if }\ i+1=j=k-1. 
\end{equation*}




\item of "global type":
denote $\delta_0=t_1t_2\: t_3t_4 \cdots t_{2g+1}t_{2g+2}$

\begin{eqnarray*}
\delta_0 & \text{centralizes} & t_{2g+1} t_{2g-1} \cdots t_3 t_1, \\
&& t_{2g+2} t_{2g} \cdots t_4 t_2.
\end{eqnarray*}

\item of "quotient type":
denote $\delta_1=t_{2g+1}t_{2g+2}\: 
t_{2g-1} t_{2g} \cdots t_1t_2.$

\begin{eqnarray*}
(\delta_0\delta_1)^3 & = & 1\\
\delta_0^{g+2} &= &1. 
\end{eqnarray*}

\end{itemize}
\end{main}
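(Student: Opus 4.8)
The plan is to present $\Tg^{\maxi}$, in the range $g\equiv 1\ (\mod 3)$, as a quotient stack $[U/G]$ with $U$ the complement of a trigonal discriminant and $G$ the pertinent automorphism group, to compute $\pi_1(U)$ by braid monodromy following \cite{loenne-duke} and \cite{loenne-weierstrass}, and then to read off the extra relations produced by the quotient. The model to keep in mind throughout is the elliptic-curve stack $\mathcal{M}_{1,1}=[X/\mathbb{G}_m]$ from the introduction, where $\pi_1$ of the cuspidal-cubic complement is the braid group $B_3$ and the weighted $\mathbb{G}_m$-action cuts it down to the modular group; the trigonal maximal stratum should follow exactly this pattern, with the diagram and global relations playing the role of $B_3$ and the quotient relations playing the role of the passage to $\psl$.

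First I would restrict the Bolognesi--Vistoli presentation $\Tg=[X'/\Gamma_g]$ to the closed locus of maximal Maroni invariant. For $g\equiv 1\ (\mod 3)$ this locus sits on a single most-unbalanced scroll, so a trigonal curve on it is cut out by a rigid normal form in two binary forms; I would accordingly exhibit $\Tg^{\maxi}\simeq[U/G]$, where $U$ is the complement of the trigonal discriminant in the affine space of admissible forms and $G\subseteq\Gamma_g$ is the stabilizer of this normal form, containing the scaling torus and the automorphisms of the base $\PP^1$.

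Next I would compute $\pi_1(U)$ by setting up a generic pencil on the space of forms and applying the Zariski--van Kampen method, importing the braid monodromy of the trigonal discriminant from \cite{loenne-weierstrass}. The distinguished vanishing meridians give the generators $t_1,\dots,t_{2g+2}$, split into the odd- and even-indexed families by the two summands of the Tschirnhausen bundle. The local normal forms of the discriminant along its singular strata yield the relations of diagram type: ordinary braid relations for $j=i+1$, the \emph{distance-two} braid relations for $j=i+2$ that are characteristic of degree-three discriminants, commutation for indices at distance at least three, and the mixed relation $(t_it_jt_i^{-1})t_k=t_k(t_it_jt_i^{-1})$ attached to a consecutive triple. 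The relations of global type, namely that $\delta_0$ centralizes the two half-products over odd and over even indices, come, as in \cite{loenne-duke}, from the monodromy at infinity together with the way the two Tschirnhausen components meet.

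Finally I would pass to the quotient via the homotopy exact sequence of $U\to[U/G]\to BG$,
\[
\pi_1(G)\xrightarrow{\ \partial\ }\pi_1(U)\longrightarrow\pi_1^{orb}([U/G])\longrightarrow\pi_0(G)\longrightarrow 1,
\]
which identifies $\pi_1^{orb}(\Tg^{\maxi})$ with $\pi_1(U)$ modulo the normal closure of $\im\partial$. The weighted scaling torus acts on the center of the braid-type group $\pi_1(U)$ and renders two distinguished central elements torsion: the total twist $\delta_0$ becomes of order $g+2$, and $\delta_0\delta_1$ of order three, precisely as the $\mathbb{G}_m$-weights turn $B_3$ into $\psl$ for $\mathcal{M}_{1,1}$. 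These are the relations of quotient type. The main obstacle is completeness: one must prove that $\im\partial$ is generated exactly by the elements producing $\delta_0^{g+2}$ and $(\delta_0\delta_1)^3$, so that no further relation is hidden in the quotient. I would settle this by computing $\partial$ explicitly on a basis of $\pi_1(G)$, cross-checking against the abelianization (which must reproduce the first orbifold homology of $\Tg^{\maxi}$) and against the surjection $\pi_1^{orb}(\Tg^{\maxi})\onto\pi_1^{orb}(\Tg)$ of Theorem \ref{isomaroni}; pinning down the base-automorphism contribution recorded by $\delta_1$ and verifying minimality of the relation set is where the real work lies.
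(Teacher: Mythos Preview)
Your proposal follows essentially the same route as the paper: reduce to a quotient $[\PP V'_{\maxi}/(\CC^*\times GL_2)]$ via Prop.~\ref{isop1quot} and Prop.~\ref{linsysquo}, import the presentation of $\pi_1(\PP V'_{\maxi})$ from \cite{loenne-weierstrass} (yielding the diagram and global relations), then compute the images of explicit $S^1$-orbits of the acting group to obtain the quotient relations (Lemma~\ref{homclasses} and Prop.~\ref{freehomotopic}).

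Two points deserve a caveat. First, the cross-check you propose against Theorem~\ref{isomaroni} is unavailable: that theorem requires $M<g/3-1$, which excludes precisely the maximal stratum when $g\equiv1\ (\mathrm{mod}\ 3)$; indeed Remark~\ref{surjmon} singles this case out as the exception to surjectivity onto $\map_g$. Second, the presentation coming out of \cite{loenne-weierstrass} is in generators $T_i$ attached to a \emph{different} graph (Fig.~\ref{graph}) and carries an infinite family of relations (item $iv)$ of Thm.~\ref{theorem2}); the paper passes to the $t_i$ through an explicit but nontrivial isomorphism of free groups (Cor.~\ref{recast}), and showing that the two ``global'' relations suffice to recover all of $iv)$ is, in the paper's own words, ``quite an effort''---your outline glosses over this reduction step.
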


See Section \ref{sect-presentation} for more details on the different relations.

\bigskip

\bf Acknowledgments: \rm We warmly thank C.Ciliberto, F.Flamini, T.Dedieu, I.Tyomkin and A.Vistoli for suggestions and fruitful email exchange.

\subsection{The orbifold fundamental group}\label{sect-orbifund}

We will denote by $[X/G]$ the (possibly orbifold) quotient of a topological space (or variety, scheme, etc.) $X$ by a group $G$. When we work on an orbifold $[X/\Gamma]$, the orbit of $x_0\in X$ will be indicated by $\Gamma \cdot x_0$.

\begin{defi} \rm\label{orbifundamentalgp}
Let $X$ be a topological space, $x_0$ a point of $X$, $\Gamma$ a group acting on $X$. Let moreover $E\Gamma$ be the universal $\Gamma$-principal bundle over a classifying space $\mathbf{B}\Gamma$. We define the orbifold fundamental group $\pi_1^{orb}([X/\Gamma],\Gamma\cdot x_0)$ as the classical fundamental group $\pi_1([X\times E\Gamma/\Gamma],\Gamma \cdot (x_0,t))$, where $t$ may be any element of $E\Gamma$, since $E\Gamma$ is contractible.
\end{defi}

\begin{rmk}
The orbifold fundamental group then fits into a commutative diagram
\[\xymatrix{\pi_1(X,x_0)\ar[r] \ar@{=}[d] & \pi_1^{orb}([X/\Gamma], \Gamma\cdot x_0) \ar[r] \ar@{=}[d] & \pi_0(\Gamma,1) \\
\pi_1(X\times E\Gamma ,(x_0,e))\ar[r] & \pi_1([X\times E\Gamma/\Gamma], \Gamma\cdot (x_0,t)) \ar[r] &  \pi_0(\Gamma,1). \\ }\]
\end{rmk}

Accordingly, given a morphism $\phi$ from a smooth algebraic variety $Y$, pointed at $y_0$, to an orbifold $[X/\Gamma]$, we define a map of fundamental groups as follows. The morphism $\phi$ is given by the datum consisting of a $\Gamma$-torsor $P_Y$ over $Y$ and a $\Gamma$-equivariant morphism  $\tilde\phi: P_Y \to X$.
By choosing a base point $\tilde y\in P_Y$ over $y_0$ and its images
$\tilde\phi (\tilde y)\in X$ and $\Gamma \cdot \tilde\phi (\tilde y)\in [X/\Gamma]$, we get a commutative diagram


\begin{equation}
\xymatrix{ \pi_1(P_Y, \tilde y) \ar[r]^{\tilde\phi_* } \ar@{->>}[d] & \pi_1(X,\tilde\phi (\tilde y))\ar@{->>}[d]   \\
\pi_1(Y,y_0) \ar[r]^-{\phi_*} &  \pi_1^{orb} ([X/\Gamma],\Gamma \cdot \tilde\phi (\tilde y)) \\ }
\end{equation}

Now, let us recall the construction from Def.\ \ref{orbifundamentalgp}. Since $E\Gamma$ is contractible, then $\pi_1^{orb}([X/\Gamma], \Gamma \cdot x_0)= \pi_1(X,x_0)/\pi_1(\Gamma,e)$. This equality is well-defined and independent of the choice of the base point inside $\Gamma\cdot x_0$, in the sense that there exists a canonical isomorphism $\psi_{x_0,x_1}:\pi_1(X,x_0)\to\pi_1(X,x_1)$,  $\forall x_0,x_1 \in \Gamma\cdot x_0$. Hence, the extension at the bottom of the above diagram - which defines $\phi_*$ - is possible in a unique way and independently of the choice of $\tilde y$, since the kernel of both vertical maps is the image of $\pi_1(\Gamma,e)$.

\subsection{The stack of smooth trigonal curves}\label{sect-trigostack}

\newcommand{\omegafull}{\Omega^{full}_{2,g+2}}
\newcommand{\sym}{\mathrm{Sym}}

The purpose of this subsection is to review and recall the construction of the quotient stack of smooth trigonal curves of genus $g$, as introduced in \cite{bolognesi-vistoli} generalizing a construction by Miranda \cite{miranda}. This stack is constructed as the complement of an invariant hypersurface inside a quotient stack of a vector bundle over an open set of an affine space. On the other hand, it also has a presentation as an orbifold itself. Let us review these constructions.

\newcommand{\wt}{\widetilde}
\newcommand{\Mat}{M\!at}

\medskip
 
We warn the reader that our notation is slightly different from that of \cite{bolognesi-vistoli}. Let us recall from \cite{miranda} that the datum of a trigonal curve $t:C\to \PP^1$ of genus $g$ is equivalent to the datum of a rank two vector bundle $E$ on $\PP^1$ (actually obtained as $t_*\ko_C/ \ko_{\PP^1}$, and known as \it Tschirnhausen module \rm) with a few properties, and a section of $\sym^3E\otimes \det E^*$. Notably, the splitting type $(m,n)$ of $E$ should be such that $m + n = g + 2$, and, if $C$ is integral, then $m,n \geq \frac{g+2}{3}$ (see also \cite[Prop. 2.2]{bolognesi-vistoli}). The stack of smooth trigonal curves is constructed starting from this datum.

Let $\Mat_{2,g+2}$ be the affine space of $(g+2)\times (g+4)$ matrices $(l_{ij})$, where each $l_{ij}$ is a linear form in two indeterminates. Let us denote by $\wt{\Omega}_{2,g+2}$ the open subscheme of $\Mat_{2,g+2}$ parametrizing matrices $(l_{ij})$ with the property that the matrix $(l_{ij}(p))$ has rank $2$ at all points $p\in\PP^1$. As remarked in \cite[Prop. 4.2]{bolognesi-vistoli}, 

\begin{lemma}\label{purecod}
The complement $\Mat_{2,g+2}/\wt{\Omega}_{2,g+2}$ is pure-dimensional of codimension at least 2.
\end{lemma}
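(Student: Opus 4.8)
The plan is to read the complement as the image of an incidence variety and to bound its dimension by the classical theory of generic determinantal loci. Writing each entry as $l_{ij}=x\,a_{ij}+y\,b_{ij}$, a matrix of linear forms is the same datum as a pencil $M=xA+yB$ of $(g+2)\times(g+4)$ scalar matrices, and evaluation at $p=[x_0:y_0]\in\PP^1$ gives $M(p)=x_0A+y_0B$. By definition $\wt\Omega_{2,g+2}$ is the locus where $M(p)$ has maximal rank $g+2$ for every $p$, equivalently where $\ker M(p)$ has the minimal dimension $(g+4)-(g+2)=2$ at all $p$, so that the kernel sheaf is the rank-$2$ Tschirnhausen bundle. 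Hence the complement is exactly the set of $M$ for which $\ON{rank}M(p)\le g+1$ for some $p$.

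First I would introduce the incidence variety
$$Z=\{(M,p)\in\Mat_{2,g+2}\times\PP^1 : \ON{rank}M(p)\le g+1\},$$
cut out by the vanishing of the maximal $(g+2)\times(g+2)$ minors of the universal evaluated matrix, together with its two projections $\pi_1\colon Z\to\Mat_{2,g+2}$ and $\pi_2\colon Z\to\PP^1$. Since $\PP^1$ is proper, $\pi_1$ is closed, so $\pi_1(Z)$ is closed and is precisely the complement $\Mat_{2,g+2}\setminus\wt\Omega_{2,g+2}$; the codimension will therefore be read off from $\dim Z$, using $\dim\pi_1(Z)\le\dim Z$. Next I would compute $\dim Z$ through $\pi_2$. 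For a fixed $p$, evaluation $M\mapsto M(p)$ is a surjective linear map onto the space of $(g+2)\times(g+4)$ scalar matrices, and $\pi_2^{-1}(p)$ is the preimage of the determinantal variety of matrices of rank $\le g+1$. By the classical formula that variety is irreducible of codimension $\big((g+2)-(g+1)\big)\big((g+4)-(g+1)\big)=1\cdot 3=3$, and pulling back under the surjective linear evaluation preserves both irreducibility and codimension. Thus every fibre $\pi_2^{-1}(p)$ is irreducible of dimension $\dim\Mat_{2,g+2}-3$.

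Because $\mathrm{GL}_2$ acts on the two indeterminates, hence transitively on $\PP^1$, and compatibly on $\Mat_{2,g+2}$ preserving $Z$, the map $\pi_2$ is a $\PP^1$-homogeneous, Zariski-locally trivial fibration with this irreducible fibre; trivializing over the two standard charts of $\PP^1$ then shows $Z$ is irreducible of dimension $(\dim\Mat_{2,g+2}-3)+1=\dim\Mat_{2,g+2}-2$. Consequently $\pi_1(Z)$ is an irreducible closed subset, hence automatically pure-dimensional, of dimension at most $\dim Z=\dim\Mat_{2,g+2}-2$, i.e. of codimension at least $2$. (A general degenerate $M$ drops rank at a single point, so $\pi_1$ is generically injective and the codimension is in fact exactly $2$, though only the inequality is needed here.)

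The main obstacle is the purity assertion rather than the codimension bound: once the determinantal codimension $(m-r)(n-r)$ is in hand the dimension estimate is immediate, but equidimensionality requires knowing that the incidence variety $Z$ is \emph{irreducible}. This is where I would rely on the two classical inputs above — irreducibility of the generic determinantal variety of fixed rank, and homogeneity of the family $\pi_2\colon Z\to\PP^1$ under $\mathrm{GL}_2$ — to conclude that $Z$, and therefore its proper image, is irreducible and hence of pure dimension.
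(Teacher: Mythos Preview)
Your argument is correct. The paper itself does not supply a proof of this lemma: it merely records the statement and attributes it to \cite[Prop.~4.2]{bolognesi-vistoli}. Your incidence-variety computation --- using the classical codimension $(m-r)(n-r)=1\cdot3=3$ for the determinantal locus of $(g+2)\times(g+4)$ scalar matrices of rank $\le g+1$, together with the $GL_2$-homogeneity of the projection to $\PP^1$ to obtain irreducibility of $Z$ and hence of its proper image --- is the standard route to such a statement and is almost certainly what the cited source does as well. One harmless slip: on $\wt\Omega_{2,g+2}$ it is the \emph{cokernel} sheaf $E_{2,g+2}$ of the map~(\ref{universalseq}), not the kernel, that is locally free of rank~$2$; this plays no role in the dimension count.
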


In what follows, we will identify a matrix $(l_{ij})$ with the associated sheaf homomorphism over $\PP^1 \times \wt{\Omega}_{2,g+2}$

\begin{equation}\label{universalseq}
\xymatrix{ \ko_{\PP^1\times \wt{\Omega}_{2,g+2}}(-1)^{g+2}\quad\ar[r]^{\quad\: (l_{ij})} & \quad \ko^{g+4}_{\PP^1 \times \wt{\Omega}_{2,g+2}}.\\ }
\end{equation}

We will denote by $E_{2,g+2}$ the cokernel of the above universal morphism. It is locally free of rank 2. Moreover, inside $\wt{\Omega}_{2,g+2}$ we will distinguish an open subset $\Omega_{2,g+2}$, which is defined as follows:
for any matrix $(l_{ij}) \in \Omega_{2,g+2}$, the cokernel sheaf $E_{2,g+2}$ is a globally generated locally free sheaf of rank 2, the degree of its restriction to the geometric fibers of the projection $\pi:\PP^1\times \Omega_{2,g+2} \to \Omega_{2,g+2}$ is $(g+2)$, and if $(m,n)$ is the splitting type of $E_{2,g+2}$ over such a fiber, then $m, n\geq \frac{g+2}{3}$. We will often abuse of notation by denoting simply $E_{2,g+2}$ the restriction to $\Omega_{2,g+2}.$ 
Let us also introduce the well-known Maroni invariant of a trigonal curve $C$ as $M:=|m-n|$. This is a discrete invariant of a trigonal curve (introduced in \cite{maroni}) that takes the even values from 0 to $\lfloor\frac{g+2}{3}\rfloor$ if $g(C)$ is even, or the odd values from 1 to $\lfloor\frac{g+2}{3}\rfloor$ if $g(C)$ is odd. The open subset $\Omega_{2,g+2}$ is naturally stratified by the Maroni invariant $\Omega_{2,g+2}=\Omega^{\mini}_{2,g+2}\cup \cdots \cup \Omega^{M}_{2,g+2}\cup \cdots \cup \Omega^{\maxi}_{2,g+2}$, where $\mini$ and $\maxi$ denote respectively the minimal and the maximal Maroni invariant. The stratification via the Maroni invariant extends to the whole moduli space of trigonal curves.

\medskip

In order to pass from $\Omega_{2,g+2}$ to trigonal curves we need to consider a tensor of $E_{2,g+2}$, namely $\sym^3E_{2,g+2} \otimes \det E_{2,g+2}^*$, and consider the sheaf $\mathcal{E}_{2,g+2}:=\pi_*(\sym^3E_{2,g+2} \otimes \det E_{2,g+2}^*)$ over $\Omega_{2,g+2}$. This is locally free and its formation commutes with base change. We will call $X_g$ the total space of the vector bundle corresponding to $\mathcal{E}_{2,g+2}$. We now introduce (see \cite[Sect. 4]{bolognesi-vistoli}) an algebraic group action on $X_g$. Let us take $G:=GL_{g+4} \times GL_{g+2}\times GL_2$ and consider the embedding

\begin{eqnarray}
\eta:\quad \mathbb{C}^* &\hookrightarrow & GL_{g+4} \times GL_{g+2}\times GL_2,\\
t\:\: & \mapsto & (Id_{g+4},\:\:\:tId_{g+2},\:\:t^{-1}Id_2). \notag
\end{eqnarray}

of the torus. The image of this embedding is a central group subscheme of $G$ and we will denote by $\Gamma_g$ the cokernel of $\eta$. The group $\Gamma_g$ acts naturally on $\Omega_{2,g+2}$ and $X_g$. In \cite[Thm. 5.3]{bolognesi-vistoli} the following is proven.

\begin{thm}\label{trigonalquot}
Let $\hat{\Tg}$ be the moduli stack consisting of objects $(C \stackrel{t}{\ra}P \ra S)$, where $P$ is a $\PP^1$-bundle over $S$, $t$ is a triple cover such that the splitting type $(m,n)$ of the associated rank 2 bundle on $P$ satisfies $m,n \geq \frac{g+2}{3}$. Then there is an equivalence of fibered  categories $\hat{\Tg}\cong [X_g/\Gamma_g]$
\end{thm}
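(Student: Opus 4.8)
The plan is to produce quasi-inverse functors between $\hat{\Tg}$ and $[X_g/\Gamma_g]$, the key geometric content being that $X_g$ is a $\Gamma_g$-torsor over $\hat{\Tg}$. In one direction, the universal matrix (\ref{universalseq}) together with the tautological section over $X_g$ assemble into a family of triple covers over $X_g$; since this family is invariant under the $\Gamma_g$-action (the central subgroup $\eta(\CC^*)$ acting trivially), it descends to a morphism $[X_g/\Gamma_g]\ra\hat{\Tg}$, and the theorem asserts that this is an equivalence. As a first reduction I would pass to the Tschirnhausen data: by Miranda's structure theorem for triple covers \cite{miranda} (the degree-$3$ case of the Casnati--Ekedahl formalism), giving an object $(C\stackrel{t}{\ra}P\ra S)$ of $\hat{\Tg}$ is the same as giving the $\PP^1$-bundle $P\ra S$, the rank-two Tschirnhausen bundle $E=(t_*\ko_C/\ko_P)^\vee$ on $P$, and a section $\eta\in H^0(P,\sym^3E\otimes\det E^*)$ restricting to a smooth trigonal curve on every geometric fibre. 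The splitting-type hypothesis $m,n\ge(g+2)/3$ is exactly the condition defining $\Omega_{2,g+2}$, so it suffices to identify $[X_g/\Gamma_g]$ with the stack of triples $(P,E,\eta)$.

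The main point is the torsor of presentations. Fibrewise one has $E\cong\ko(a)\oplus\ko(b)$ with $a+b=g+2$ and $a,b\ge 0$, so $h^0(E)=g+4$ and $h^1(E)=0$, and evaluation gives a surjection $\ko^{g+4}\twoheadrightarrow E$; a short cohomology computation shows that its kernel $K$ has $H^0(K)=H^1(K)=0$, which forces $K\cong\ko(-1)^{g+2}$ and hence a resolution precisely of the shape (\ref{universalseq}). The ambiguity in this construction is threefold: a basis of $H^0(E)$ (a $\gl_{g+4}$-torsor), a basis of the twisted kernel $K(1)$ (a $\gl_{g+2}$-torsor), and a choice of homogeneous coordinates on the fibre $\PP^1$, which in families is what allows $P$ to be non-trivial and contributes the $\gl_2$-factor together with the relative $\ko(1)$. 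Scaling the source by $t$ and the coordinates by $t^{-1}$ leaves the linear matrix entries unchanged, so $\eta(\CC^*)$ acts trivially on presentations; after quotienting it out, the presentations of a fixed $E$ form a torsor under $\Gamma_g$. Carrying this out in families, using that the relevant pushforwards commute with base change and that $\Mat_{2,g+2}\smallsetminus\wt{\Omega}_{2,g+2}$ has codimension $\ge 2$ (Lemma \ref{purecod}) so that the $H^0$ and $\ON{Pic}$ computations are unaffected, exhibits $\wt{\Omega}_{2,g+2}\ra\{(P,E)\}$ as a $\Gamma_g$-torsor.

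It remains to add the section and conclude. Over $\Omega_{2,g+2}$ the sheaf $\mathcal E_{2,g+2}=\pi_*(\sym^3E_{2,g+2}\otimes\det E_{2,g+2}^*)$ is locally free and its formation commutes with base change, and $X_g$ is its total space; the $\Gamma_g$-action on $\Omega_{2,g+2}$ lifts $\Gamma_g$-linearly to $X_g$, the twist by $\det E^*$ being exactly what makes $\eta(\CC^*)$ act trivially on $H^0(\sym^3E\otimes\det E^*)$ as well. Thus a point of $X_g$ is a presentation together with a section $\eta$, and the fibre of $X_g$ over a triple $(P,E,\eta)$ is again a $\Gamma_g$-torsor. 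This identifies $[X_g/\Gamma_g]$ with the stack of triples $(P,E,\eta)$, which by the first reduction is $\hat{\Tg}$; verifying that the functor attached to the universal family inverts this identification is then formal.

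The step I expect to be the real obstacle is the family version of the second paragraph: one must check that the fibrewise resolution (\ref{universalseq}) globalises to a short exact sequence of the prescribed shape over an arbitrary base $S$, that the group of ambiguities is represented by $\Gamma_g$ and nothing larger, and that the non-triviality of the $\PP^1$-bundle $P$ and the relative $\ko(1)$-twist are correctly absorbed into the $\gl_2$-factor modulo $\eta(\CC^*)$. This is where the cohomological vanishing, base-change compatibility, and the codimension estimate of Lemma \ref{purecod} must be combined, and where the careful bookkeeping of the central torus $\eta(\CC^*)$ is indispensable.
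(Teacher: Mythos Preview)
The paper does not prove this statement: Theorem~\ref{trigonalquot} is quoted verbatim from \cite[Thm.~5.3]{bolognesi-vistoli} and is used here as a black box to set up the quotient presentation of $\Tg$. So there is no proof in the present paper to compare your attempt against.

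That said, your outline is a faithful sketch of the strategy actually carried out in \cite{bolognesi-vistoli}: reduce via Miranda's structure theorem to triples $(P,E,\eta)$, exhibit the space of presentations of $E$ by a matrix of linear forms as a $\Gamma_g$-torsor over the stack of such pairs, and then add the section of $\sym^3E\otimes\det E^*$ by passing to the total space $X_g$. Two small slips are worth noting. First, in your opening reduction you require $\eta$ to cut out a \emph{smooth} curve on every geometric fibre; that is the condition for $\Tg$ (Theorem~\ref{finetrigo}), not for $\hat{\Tg}$, which is the statement at hand and involves all of $X_g$, discriminant included. Second, you write that ``$\wt{\Omega}_{2,g+2}\to\{(P,E)\}$'' is the $\Gamma_g$-torsor; with the splitting-type constraint $m,n\ge(g+2)/3$ already imposed, the relevant open set is $\Omega_{2,g+2}$ rather than $\wt{\Omega}_{2,g+2}$. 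Neither slip affects the architecture of the argument, and the substantive difficulty you flag at the end---globalising the fibrewise resolution and checking that the automorphism ambiguity is exactly $\Gamma_g$ after killing the central $\CC^*$---is indeed where the work in \cite{bolognesi-vistoli} lies.
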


\newcommand{\Dg}{\mathcal{D}_g}
\newcommand{\Dmg}{\mathcal{D}_{M,g}}

We recall from \cite{bolognesi-vistoli} that a point of $\Omega_{2,g+2}$ completely defines a Tschirnhausen module $E$ over $\PP^1$ (up to the action of $\Gamma_g$), whereas it is the fibers of the vector bundle $X_g$ that naturally parametrize all the sections of $\sym^3E\otimes \det E^*$.

In order to consider the moduli stack $\Tg$ of \textit{smooth} trigonal curves we need to consider the complement of a certain closed integral substack $\mathcal{S}_g\subset \hat{\Tg}$ parametrizing singular curves. The substack $\mathcal{S}_g$ is of the form $[\mathcal{D}_g/\Gamma_g]$ for an invariant discriminant hypersurface $\mathcal{D}_g\subset X_g$. We will denote by $X_g'$ the complement $X_g - \cald_g$. It is not hard to observe that the restriction of $\Dg$ to the geometric fibers of the vector bundle $X_g$ corresponds naturally to the discriminant locus of the given space of sections over $\PP^1$, or curves inside the Hirzebruch surface $\mathbb{F}_M$.

\begin{thm}[\cite{bolognesi-vistoli}]\label{finetrigo}
The (fine) moduli substack $\Tg \subset \hat{\Tg}$ of smooth trigonal curves is equivalent to the quotient stack $[X'_g/\Gamma_g]$.
\end{thm}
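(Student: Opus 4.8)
The plan is to deduce the asserted equivalence by restricting the equivalence $\hat{\Tg}\cong[X_g/\Gamma_g]$ of Theorem \ref{trigonalquot} to an open substack. Concretely, I would first locate inside the total space $X_g$ the discriminant locus $\mathcal{D}_g$ over which the associated trigonal curve fails to be smooth, verify that it is a $\Gamma_g$-invariant hypersurface, and then show that its complement $X_g'=X_g-\mathcal{D}_g$ corresponds under the equivalence exactly to the substack of smooth curves.

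First I would make the universal family explicit. By Theorem \ref{trigonalquot} the quotient $[X_g/\Gamma_g]$ carries a universal triple cover; pulling it back along the atlas $X_g\to[X_g/\Gamma_g]$ yields a flat family $\mathcal{C}_X\to X_g$ whose fiber over a point $(E,\phi)$ is the trigonal curve cut out by the section $\phi$ of $\sym^3E\otimes\det E^*$, realised as a relative cubic divisor in the Hirzebruch surface $\PP(E)\cong\FF_M$, $M=|m-n|$. Since the smooth locus of a flat family is open, the set of points of $X_g$ over which the fiber is singular is closed; I would take $\mathcal{D}_g$ to be this locus with its reduced structure.

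Next I would establish the two properties of $\mathcal{D}_g$ that the statement needs. Invariance under $\Gamma_g$ is immediate, because the $\Gamma_g$-action arises from changes of the Tschirnhausen data by the two general linear factors and the central torus, each of which carries a curve to an isomorphic one and so preserves smoothness. To see that $\mathcal{D}_g$ is an integral hypersurface I would work fiberwise over $\Omega_{2,g+2}$: a fiber of $X_g$ is the affine space of sections, i.e. of cubic divisors in a fixed $\FF_M$, and the singular divisors form the classical discriminant of this linear system --- the image of the incidence locus where a cubic and its first partials vanish simultaneously --- which is an irreducible hypersurface. As $\Omega_{2,g+2}$ is irreducible and this fiberwise discriminant is an irreducible divisor varying in family, $\mathcal{D}_g$ is pure of codimension one and integral, so that $\mathcal{S}_g=[\mathcal{D}_g/\Gamma_g]$ is the asserted closed integral substack of $\hat{\Tg}$.

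Finally, being closed and $\Gamma_g$-invariant, $\mathcal{D}_g$ has open invariant complement $X_g'$, so $[X_g'/\Gamma_g]$ is an open substack of $[X_g/\Gamma_g]\cong\hat{\Tg}$; by construction it parametrises precisely the families with smooth fibers, which is $\Tg$. Fineness, i.e. that $\Tg$ is the Deligne--Mumford stack representing the moduli problem of smooth trigonal curves rather than merely its coarse space, is inherited from the quotient presentation together with the finiteness of the automorphism groups of smooth trigonal curves. I expect the main obstacle to be the fiberwise analysis of the third step: one must confirm that the singular-curve locus has codimension exactly one --- so that the generic section yields a smooth curve and $\mathcal{D}_g$ is a proper hypersurface --- and that the discriminant is well behaved uniformly as the Maroni invariant $M$, and with it the ambient surface $\FF_M$, jumps between the strata $\Omega^M_{2,g+2}$ of $\Omega_{2,g+2}$.
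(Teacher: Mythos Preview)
The paper does not actually prove this theorem: it is quoted as a result from \cite{bolognesi-vistoli}, and the surrounding text merely sketches how $\Tg$ arises from $\hat{\Tg}$ by removing the closed integral substack $\mathcal S_g=[\mathcal D_g/\Gamma_g]$ of singular curves. Your proposal reproduces exactly that sketch --- restrict the equivalence of Theorem~\ref{trigonalquot} to the $\Gamma_g$-invariant open complement of the fiberwise discriminant --- and this is also the argument in the cited reference, so there is nothing to contrast.
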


\textbf{Notation}
We will denote by $\Omega^M_{2,g+2}$ the stratum  inside $\Omega_{2,g+2}$ corresponding to the Maroni invariant $M$. By restriction of $X_g$ we define a fibration $\Xmg \to \Omega^M_{2,g+2}$. We will denote by $\Dmg$ and $\Xmgp$ the corresponding discriminant and its complement.


\section{The monodromy map}

In this section we investigate the monodromy map associated to
families of trigonal curves. Though often defined using an
Ehresmann connection on a differentiable fibre bundle, it can be put
on a purely topological footing. We need to employ a more detailed
definition later, but conceptually monodromy does the following:
to a closed path in the base of a fibre bundle it associates the
homeomorphism of the base fibre obtained by a bundle trivialization
along the path. Its isotopy class only depends on the homotopy
class of the path, hence monodromy provides a well-defined map
from the fundamental group of the base to the mapping class
group of the fibre.

In our situation of trigonal families the monodromy map takes
values in the mapping class group of a curve of genus $g$.
Our aim now is threefold: first to define a monodromy map
on the fundamental group of the moduli orbifold, second
to show its universality, i.e.\ that it factors every monodromy map
of a trigonal family in a way to be made precise below, and last
to give properties of the image\footnote{The study of the kernel 
will be taken up in a subsequent paper}.

In our argument we need a notion of monodromy in the more
general setting of $G$-principal bundles - the usual situation corresponding
to $G$ equal to the group of homeomorphisms of the fibre. Let $I$ be the unit interval.

\begin{defi} \rm
Suppose $E\to B$ is a $G$-bundle over a base $B$ pointed at $b_0$,
then the \emph{monodromy map} (see Appendix A for the precise definition)

\[
\mu_G: \quad \pi_1(B,b_0)\quad \tto \quad \pi_0(G)
\]

associates to a closed path the isotopy class of an element $g$ of $G$. A trivialization along the path gives a map $g_I: I\to G$ and $g= g_I(1)$.
\end{defi}

Note that this general notion is well-adapted to comparing monodromy maps of bundles with possibly different structure groups. This feature will be relevant in the rest of the paper.

\begin{example}
There exists a tautological genus $g$ trigonal family over $X'_g$, obtained by pull-back along the quotient map of the universal family on $[X'_g/\Gamma_g]$. It is a fibre bundle for the group $\mathcal{H}omeo_g$ of orientation preserving homeomorphisms of a Riemann surface of genus $g$, hence the monodromy
associated to a base point $x_0\in X_g'$ is

\[
\mu^{X'}_g: \quad
\pi_1(X_g', x_0) 
\quad\tto\quad
\pi_0(\mathcal{H}omeo_g)=\mathcal{M}ap_g
\]

with values in the mapping class group $\map_g$ of genus $g$.
\end{example}

We make the following observation:

\begin{prop}
\label{trivially}
If $x_0\in X_g'$ is any base point and

\[
\pi_1(\Gamma_g,e) \quad \tto
\quad \pi_1(X'_g,x_0)
\]

the map induced by the $\Gamma_g$-action on $X'_g$,
then $\mu^{X'}_g $ is trivial on the image.
\end{prop}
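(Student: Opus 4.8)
The plan is to exploit the fact that the tautological family $\calx\to X_g'$ is $\Gamma_g$-equivariant. Indeed, as recalled in the Example it is the pull-back along the quotient map $q\colon X_g'\to[X_g'/\Gamma_g]=\Tg$ of the universal family on the quotient stack, and any object pulled back along such an atlas carries a canonical $\Gamma_g$-equivariant structure (this is just descent for the equivalence $\hat{\Tg}\cong[X_g/\Gamma_g]$ of Theorem \ref{trigonalquot}). Write $o\colon\Gamma_g\to X_g'$, $\gamma\mapsto\gamma\cdot x_0$, for the orbit map; then $o(e)=x_0$ and the homomorphism in the statement is $o_*\colon\pi_1(\Gamma_g,e)\to\pi_1(X_g',x_0)$. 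Fix the reference fibre $F:=\calx_{x_0}$.

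First I would make the equivariance explicit on fibres. The lift of the $\Gamma_g$-action to the total space of $\calx$ restricts, for each $\gamma$, to an isomorphism of trigonal curves $\Phi_\gamma\colon\calx_{x_0}\to\calx_{\gamma\cdot x_0}$, with $\Phi_e=\mathrm{id}$, and these depend continuously on $\gamma$ in the analytic topology (the action is algebraic). Since $\Gamma_g=(GL_{g+4}\times GL_{g+2}\times GL_2)/\CC^*$ is connected and $\Phi_e$ is the identity, every $\Phi_\gamma$ is an orientation preserving homeomorphism between the respective fibres.

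The crucial step is to recognise the family $\{\Phi_\gamma\}_{\gamma\in\Gamma_g}$ as a global trivialization of the pulled-back bundle $o^*\calx\to\Gamma_g$: its fibre over $\gamma$ is $\calx_{\gamma\cdot x_0}$, and $(\gamma,v)\mapsto\Phi_\gamma(v)$ defines an isomorphism $\Gamma_g\times F\xrightarrow{\sim}o^*\calx$ of $\homeo_g$-bundles. Thus $o^*\calx$ is trivial as a $\homeo_g$-bundle, and a trivial bundle has trivial monodromy, so the monodromy map $\pi_1(\Gamma_g,e)\to\pi_0(\homeo_g)=\map_g$ of $o^*\calx$ is trivial. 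By the functoriality of the monodromy construction (Appendix A), the monodromy of the pull-back $o^*\calx$ equals $\mu^{X'}_g\circ o_*$; hence $\mu^{X'}_g$ vanishes on the image of $o_*$, which is exactly the assertion.

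The main obstacle I anticipate is the bookkeeping in the first two steps: upgrading the abstract $\Gamma_g$-equivariant structure on $\calx$ coming from Theorem \ref{trigonalquot} into an honestly continuous family $\gamma\mapsto\Phi_\gamma$ of homeomorphisms, i.e. a genuine trivialization in the structure group $\homeo_g$ rather than a mere pointwise collection of abstract isomorphisms of curves. This requires tracing the universal family through the equivalence and checking compatibility of the descent with the analytic topology, together with the (elementary but needed) connectedness of $\Gamma_g$ that keeps the trivialization inside the orientation preserving subgroup. Once continuity is in hand, the triviality of the monodromy of a trivial bundle and the appeal to functoriality are formal, and the proposition follows.
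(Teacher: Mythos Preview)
Your proposal is correct and follows essentially the same idea as the paper. The paper's own proof is extremely terse: it simply observes that the $\Gamma_g$-orbit through $x_0$ parametrizes different coordinate presentations of one and the same trigonal curve, so the restricted family is constant and hence has trivial monodromy. Your argument unpacks exactly this observation into the equivariant structure on the tautological family and the resulting global trivialization of $o^*\calx$, which is a more careful way of saying the same thing.
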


\proof
It suffices to see that the restriction to the $\Gamma_g$-orbit 
$\Gamma_g \cdot x_0$ has trivial monodromy. But this is immediate
from the fact that the orbit consists of different ways to describe one
trigonal curve in a ruled surface by putting coordinate systems
on the base and on the surface.
\qed

Instead of giving a general definition of monodromy associated to
a moduli orbifold, we rather provide an ad hoc definition in our
special situation. Since in our situation the orbifold fundamental group is the cokernel of the inclusion above, we may define:

\begin{defi} \rm
The \emph{monodromy map} of the moduli orbifold is

\begin{equation}
\mu_g^{\mathcal{T}}: \quad \pi_1^{orb} ([X_g'/\Gamma_g],\Gamma_g\cdot x_0)
\:=\: \pi_1(X_g',x_0)\big/ \pi_1(\Gamma_g,e)
\quad \tto \quad \pi_0(\mathcal{H}omeo_g)
\end{equation}

induced from the monodromy of $X'_g$.
\end{defi}


To proceed we have to rely on the notion of morphism to an
orbifold, as described in Section \ref{sect-orbifund}, and the universality property of the (fine) moduli orbifold.





Let us recall from Section \ref{sect-trigostack} that, given a family of trigonal curves over a smooth projective variety $Y$, pointed at $y_0$, there is a natural $\Gamma_g$-torsor $P_Y$ over $Y$. The natural $\Gamma_g$-equivariant map from $P_Y$ to $X_g'$ is the datum which fixes the classifying morphism

\[
c: \quad Y \quad\tto\quad [X_g'/\Gamma_g].
\]


\begin{prop}\label{tauto}
Let $P_{y_0}$ be the fiber of $P_Y$ over $y_0\in Y$.
Given a family of trigonal curves over the smooth variety
$Y$, pointed at $y_0$, there is a commutative diagram

\[
\begin{matrix}
\pi_1(Y, y_0)  & \tto & \pi_0(\mathcal{H}omeo(P_{y_0})) \\[2mm]
\big\downarrow c_* & & \big\downarrow \;\cong \\[2mm]
\pi_1^{orb} ( [X_g'/ \Gamma_g], \Gamma_g \cdot x_0)
& \stackrel{\mu^{\mathcal{T}}_g}{\tto} & \pi_0(\mathcal{H}omeo_g)
\end{matrix}
\]
with horizontal monodromy maps and $c_*$ induced by the classifying
morphism $c$ from $Y$ to the moduli orbifold. 
\end{prop}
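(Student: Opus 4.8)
The plan is to establish the diagram of Proposition \ref{tauto} by realizing the classifying morphism $c$ concretely and then tracking what monodromy does along it. The setup is precisely the one recorded in the displayed diagram (1) of Section \ref{sect-orbifund}: the family of trigonal curves over $Y$ determines a $\Gamma_g$-torsor $P_Y$ together with a $\Gamma_g$-equivariant map $\tilde\phi\colon P_Y \to X_g'$, and these data are exactly what pins down the classifying morphism $c$. So the left vertical arrow $c_*$ is already defined, by the recipe in Section \ref{sect-orbifund}, as the unique factorization of $\tilde\phi_*\colon \pi_1(P_Y,\tilde y)\to\pi_1(X_g',\tilde\phi(\tilde y))$ through the quotients $\pi_1(Y,y_0)$ and $\pi_1^{orb}([X_g'/\Gamma_g],\Gamma_g\cdot x_0)$; the key point from that section is that this factorization exists and is canonical because the kernel of both vertical surjections is the image of $\pi_1(\Gamma_g,e)$.

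**Next I would** produce the right vertical isomorphism and the top horizontal arrow geometrically. The fiber $P_{y_0}$ of the torsor carries, by pullback of the tautological family along $\tilde\phi$, a genus $g$ trigonal curve; choosing the base point $\tilde y\in P_{y_0}$ identifies this curve with the reference fiber $\Sigma_g$, and hence gives a preferred isomorphism $\pi_0(\homeo(P_{y_0}))\cong\pi_0(\homeo_g)=\map_g$. This is the arrow marked $\cong$. The top horizontal map is then just the ordinary monodromy $\mu_{\homeo}$ of the trigonal family over $Y$, in the sense of the general Definition of $\mu_G$ for $G=\homeo(P_{y_0})$.

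**The heart of the argument**, and the step I expect to be the main obstacle, is commutativity: I must show that the monodromy of the family over $Y$ equals the composite $\mu_g^{\mathcal{T}}\circ c_*$. The clean way is functoriality of monodromy under pullback of $G$-bundles. Concretely, the trigonal family over $Y$ is the pullback, via $\tilde\phi$ composed with the quotient, of the tautological family over $X_g'$; equivalently one compares the $\homeo_g$-bundle over $Y$ with the one over $X_g'$ whose monodromy is $\mu_g^{X'}$. A closed loop $\gamma$ in $Y$ lifts (after passing to $P_Y$, where $\pi_1(P_Y,\tilde y)\onto\pi_1(Y,y_0)$) to a path in $P_Y$ whose image under $\tilde\phi$ is a path in $X_g'$; trivializing the bundle along these mutually compatible paths yields the same element of $\map_g$ up to the identification fixed by $\tilde y$. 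I would phrase this as the statement that monodromy is a natural transformation on the category of $G$-bundles, so that a bundle map covering $c$ induces the claimed equality of maps on $\pi_1$. Here is where Proposition \ref{trivially} does the real work: it guarantees that the $\pi_1(\Gamma_g,e)$-indeterminacy in choosing the lift of $\gamma$ to $P_Y$ has trivial monodromy, so the composite $\mu_g^{X'}\circ\tilde\phi_*$ descends unambiguously to $\mu_g^{\mathcal{T}}$ on the orbifold quotient and the resulting element of $\map_g$ is independent of all choices.

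**Finally I would** verify that the identifications are consistent with base points — that the isomorphism $\pi_0(\homeo(P_{y_0}))\cong\pi_0(\homeo_g)$ induced by $\tilde y$ is the one compatible with the base point $\Gamma_g\cdot x_0=\Gamma_g\cdot\tilde\phi(\tilde y)$ used to define $\mu_g^{\mathcal{T}}$ — which is automatic once we take $x_0=\tilde\phi(\tilde y)$. The technical definitions deferred to Appendix A (the precise meaning of $\mu_G$ via trivialization along paths) are exactly what make the naturality statement rigorous, so I would invoke them rather than reprove them here. Modulo that appendix, the proposition reduces to the formal naturality of monodromy together with Proposition \ref{trivially}.
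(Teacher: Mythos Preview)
Your proposal is correct and follows essentially the same route as the paper: construct the $\Gamma_g$-equivariant map $\tilde\phi\colon P_Y\to X_g'$ realizing the classifying morphism, use that the family over $Y$ (pulled back to $P_Y$) is the pullback of the tautological family along $\tilde\phi$ to factor the monodromy as $\pi_1(P_Y,\tilde y)\xrightarrow{\tilde\phi_*}\pi_1(X_g',\tilde\phi(\tilde y))\to\pi_0(\homeo_g)$, and then invoke Proposition~\ref{trivially} to descend to the orbifold fundamental group. The paper's own proof is simply a terser version of exactly this argument, omitting the explicit discussion of the right vertical isomorphism and the base-point compatibility that you spell out.
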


\proof
The family over $Y$ gives rise to a $\Gamma_g$-torsor $P_Y$ and
a classifying $\Gamma_g$-equivariant map $\tilde c: P_Y \to X_g'$
such that pull-back of the trigonal family from $Y$ and the pull-back
of the tautological trigonal family over $X'_g$ along $\tilde c$ are isomorphic.
Let $\tilde{y}\in P_{y_0}$, then the monodromy homomorphism associated to the family over $P_Y$ factors as

\[
\pi_1(P_Y, \tilde y) \quad \stackrel{\tilde c_*}{\tto} 
\quad \pi_1(X'_g,\tilde c (\tilde y))
\quad \tto \quad \pi_0 ( \mathcal{H}omeo_g)
\]

Then the diagram of the claim follows, since the monodromy is
trivial along $\Gamma$-orbits thanks to Prop.\ref{trivially}.
\qed

In order to show that the bottom map in Prop. \ref{tauto} is surjective
it suffices to construct a trigonal family with surjective monodromy. The rest of this section will be devoted to showcasing such a family.

\begin{defi} \rm
We define the \emph{tautological family} of branch data as the universal hypersurface $\mathcal{H}_{2g+4}\subset \PP^1 \times \PP^{2g+4}$
of degree $2g+4$ given by the homogeneous equation

\begin{equation}
\label{universal}
\sum_{i+j=2g+4}  a_i x_0^i x_1^j \quad = \quad 0.
\end{equation}
\end{defi}

\newcommand{\homeof}{\mathcal{H}omeo{\tilde{\:}\!}\!\;\!}

The hypersurface $\mathcal{H}_{2g+4}$ naturally defines a discriminant locus $\mathcal{D}_{2g+4}$, that is the locus of $a\in\PP^{2g+4}$ where at least two of the $2g+4$ roots of (\ref{universal}) coincide. Let $\mathcal{U}$ be the complement of $\mathcal{D}_{2g+4}$ 
inside $\PP^{2g+4}$. Consider now the trivial $\PP^1$-bundle $\PP^1\times \mathcal{U} \to \mathcal{U}$. Its structure group can be reduced from $\homeo_0$
to the subgroup $\homeo_{0,2g+4}$ of orientation preserving
homeomorphisms of a fibre, which preserve the intersection with 
$\mathcal{H}_{2g+4}$. Let $u_0\in \mathcal{U}$. Under the identification of  $\pi_1( \mathcal{U}, u_0)$ with the braid group of the sphere on $2g+4$ strands due to Zariski \cite{zariski-poincare}, the corresponding monodromy map is the
surjection (cf.\ \cite[(9.1), page 245]{fm})

\begin{equation}\label{mapclasssurj}
\xymatrix{\mu_{0,2g+4}:\pi_1( \mathcal{U}, u_0) \ar@{->>}[r] &
\pi_0(\homeo_{0,2g+4}) = \map_{0,2g+4}\\}.
\end{equation}

\begin{defi} \rm
Let $u_0\in \mathcal{U}$. Consider a simple generic triple cover $f:C\to \PP^1$ branched at the  $2g+4$ points of intersections with $\mathcal{H}_{2g+4}$ in the
fibre over $u_0$. The group of liftable homeomorphisms $\homeof_{0,2g+4}$ of $\PP^1$ with respect to $f$ is given by the 
homeomorphisms of $\PP^1$, such that there exists a homeomorphism of $C$ and a commuting diagram

\[
\xymatrix{ C \ar[r] \ar[d]_f & C \ar[d]^f \\
\PP^1 \ar[r] & \PP^1 \\}
\]


\end{defi}

Since, by simplicity, fibres over the branch points consist of only 2 points, any liftable homeomorphisms belongs to $\homeo_{0,2g+4}$. Thus, there is a sequence of successive inclusions

\begin{equation*}
\xymatrix{ \homeof_{0,2g+4} \ar@{^{(}->}[r] & \homeo_{0,2g+4}  \ar@{^{(}->}[r] & \homeo_{g}\\}.
\end{equation*}

\begin{thm}\label{surjmontrigo}
The monodromy map $\mu^{X'}_g$ 
descends to the surjective map

\begin{equation}
\xymatrix{ \mu_g^{\tfami}=
\pi_1^{orb}([X'_g/\Gamma_g], \Gamma_g\cdot x_0) \ar@{->>}[r] & \map_g.\\ }
\end{equation}

\end{thm}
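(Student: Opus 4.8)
The plan is to exploit the reduction already isolated just before the statement: by the commutative square of Proposition~\ref{tauto}, whose right vertical arrow is an isomorphism, the image of $\mu^{\mathcal{T}}_g$ contains the image of the monodromy of \emph{any} trigonal family over a smooth base $Y$, read off through the classifying map $c_*$. Hence it suffices to produce one genus $g$ trigonal family whose monodromy map $\pi_1(Y,y_0)\to\map_g$ is onto. I would build such a family from the tautological branch data: let $Y$ be a connected component of the Hurwitz space of simple generic triple covers of $\PP^1$ branched over the $2g+4$ points cut out by $\mathcal{H}_{2g+4}$, a finite covering of $\mathcal{U}$, and let $\mathcal{C}\to Y$ be the associated universal family of trigonal curves. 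The Riemann--Hurwitz count $2-2g(\mathcal{C})=3\cdot 2-(2g+4)$ pins the fibre genus to $g$, so $\mathcal{C}\to Y$ is a genuine genus $g$ trigonal family.

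Next I would identify the monodromy of $\mathcal{C}\to Y$ with a lifting homomorphism. The map $\pi_1(Y,y_0)\to\pi_1(\mathcal{U},u_0)$ realizes $\pi_1(Y,y_0)$ as the finite-index stabilizer of the chosen Hurwitz datum inside the sphere braid group on $2g+4$ strands, and under $\mu_{0,2g+4}$ of~(\ref{mapclasssurj}) its image is exactly the liftable mapping class group, i.e.\ the image of $\homeof_{0,2g+4}$. Because a generic triple cover is non-Galois with trivial deck group, each liftable homeomorphism of $\PP^1$ admits a \emph{unique} lift to $C$; this makes the assignment ``liftable class $\mapsto$ its lift'' a well-defined homomorphism $\pi_0(\homeof_{0,2g+4})\to\map_g$, and the monodromy of $\mathcal{C}\to Y$ factors as a surjection $\pi_1(Y,y_0)\to\pi_0(\homeof_{0,2g+4})$ followed by this lifting map, through the inclusions recorded before the statement. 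Surjectivity of $\mu^{\mathcal{T}}_g$ is thereby reduced to surjectivity of the lifting map onto $\map_g$.

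The core of the argument, and the step I expect to be the main obstacle, is then the explicit lifting calculus. I would place the $2g+4$ branch points along a real arc in $\PP^1$ and choose the transposition monodromies in $S_3$ so that the elementary half-twists $\sigma_i$ exchanging consecutive points --- which, together with the sphere relation, generate the liftable braids --- lift to \emph{Dehn twists} along an explicit configuration of simple closed curves on $C$. Using the lifting techniques of \cite{loenne-duke} and \cite{loenne-weierstrass}, I would compute these lifted curves and verify that the resulting collection contains a known generating set of $\map_g$, for instance a chain of $2g+1$ Humphries twists. The delicate point is precisely where the trigonal case departs from the hyperelliptic one: for a double cover the half-twists lift to twists filling out only the proper hyperelliptic subgroup, whereas the three-sheeted structure --- encoded by having two distinct transpositions occur among the monodromies --- produces enough additional twists to close up a full generating set. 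Verifying that this concrete configuration of lifted curves indeed generates all of $\map_g$, rather than some proper subgroup, is the heart of the matter.
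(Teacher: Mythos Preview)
Your overall strategy matches the paper's: reduce via Proposition~\ref{tauto} to a single trigonal family, take the universal family over the Hurwitz space $\mathcal{H}^3_g$ of simple triple covers of $\PP^1$, and factor its monodromy as
\[
\pi_1(\mathcal{H}^3_g)\;\twoheadrightarrow\;\pi_0(\homeof_{0,2g+4})\;\longrightarrow\;\map_g.
\]
The paper also makes more precise your identification of $\pi_1(Y)$ with the liftable subgroup: it constructs the finite cover $\widetilde{\mathcal{U}}\to\mathcal{U}$ associated to the preimage of $\pi_0(\homeof_{0,2g+4})$ under the surjection~(\ref{mapclasssurj}), and then proves $\mathcal{H}^3_g\cong\widetilde{\mathcal{U}}$ by a fiber-product argument. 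This cleanly yields the surjectivity of the first arrow, which you assert but do not justify; in particular it bypasses your restriction to a single connected component.

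The real divergence is at the last step. You propose to establish surjectivity of $\pi_0(\homeof_{0,2g+4})\to\map_g$ by an explicit lifting calculus: lift the elementary half-twists to Dehn twists and check that they contain a Humphries generating set. The paper does none of this; it simply invokes Hilden's theorem \cite[Thm.~4]{hilden}, which asserts exactly that every homeomorphism of $C$ is isotopic to a lift of a homeomorphism of $\PP^1$ for a simple $3$-sheeted branched cover, i.e.\ that the lifting map is onto (indeed an isomorphism). Your proposed computation would amount to reproving Hilden's result from scratch, and the references you cite (\cite{loenne-duke}, \cite{loenne-weierstrass}) concern presentations of discriminant-complement groups rather than the branched-cover lifting calculus you need. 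So nothing in your outline is wrong, but the ``heart of the matter'' you identify is already a theorem in the literature, and the paper's proof is accordingly much shorter.
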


\newcommand{\cM}{\mathcal{M}}
\newcommand{\cU}{\mathcal{U}}
\newcommand{\cH}{\mathcal{H}}

\begin{proof}
Let $\cM$ be the mapping class group associated to $\homeof_{0,2g+4}$. The group $\cM$ is of finite index inside $\map_{0,2g+4}$ \cite{bw}. 
Moreover, by means of the surjective map (\ref{mapclasssurj}), we can consider the inverse image of $\cM$ inside $\pi_1(\mathcal{U},u_0)$. This is also of finite index inside $\pi_1(\mathcal{U},u_0)$ and hence induces a finite \'etale cover $\widetilde{\mathcal{U}} \to \mathcal{U}$, which is a smooth complex variety. If we pull the trivial $\PP^1$-bundle over $\mathcal{U}$ back to $\widetilde{\mathcal{U}}$, then its structure group can be reduced to $\homeof_{0,2g+4}$ and its monodromy surjects onto $\cM$. Let $\tilde{\mu}_{0,2g+4}$ the monodromy map of $\tilde{\mathcal{U}}$ and $\tilde{u}_0$ a base point over $u_0$. The following diagram resumes the situation.
\medskip


\begin{equation}
\xymatrix{
& \pi_1(\widetilde{\cU},\tilde{u}_0) \ar[d] \ar[r]^-{\tilde{\mu}_{0,2g+4}}
& \pi_0(\homeof_{0,2g+4}) \ar[d] \ar@{=}[r]  & \cM\\
& \pi_1(\cU,u_0) \ar[r]^-{\mu_{0,2g+4}} & \pi_0(\homeo_{0,2g+4}) }
\end{equation}

In order to proceed we need to compare this with a second \'etale cover of $\mathcal{U}$. Let us describe it. Let $\cH^3_g$ be the Hurwitz space of simple triple covers of the
projective line by a smooth projective curve of genus $g$, considered up to
isomorphisms covering the identity on the projective line

\[
\begin{matrix}
C & \tto & C\\
\downarrow & & \downarrow \\
\PP^1 & \stackrel{Id}{\tto} & \PP^1
\end{matrix}
\]

By \cite[Thm. 1.53]{harris-morrison}
$\cH^3_g$ is an \'etale cover of $\cU$. Let us now compare the images of $\pi_1(\cH_g^3)$ and $\pi_1(\widetilde{U})$ inside $\pi_1(\cU)$. Composing the homomorphism of fundamental groups induced by the \'etale cover $\cH^3_g \to \cU$ with the surjective monodromy map $\mu_{0,2g+4}$, we get a monodromy map

\[
\mu_{0,2g+4}^{\cH^3}: \pi_1(\cH^3_g)\to \pi_0(\homeo_{0,2g+4}).
\]

By the second part  of \cite[thm. 1.53, page 33]{harris-morrison}
there exists a universal family of trigonal curves $\mathcal{C}^3_g \to \cH_g^3$ over the Hurwitz space. The existence of such a family implies that $\mu^{\cH^3}_{0,2g+4}$ factors through the homomorphism 

\[
\pi_0(\homeof_{0,2g+4})\to \pi_0(\homeo_{0,2g+4})
\]

induced by inclusion. Hence we get the following diagram (where for simplicity we omitted the base points)

\medskip

\qquad \qquad \qquad \xymatrix{
\pi_1(\cH^3_g) \ar@/_/[ddr] \ar@/^/[drr]
\ar@{.>}[dr]|-{\mu'} \\
& \pi_1(\widetilde{\cU}) \ar[d] \ar[r]^-{\tilde{\mu}_{0,2g+4}}
& \pi_0(\homeof_{0,2g+4}) \ar[d] \\
& \pi_1(\cU) \ar[r]^-{\mu_{0,2g+4}} & \pi_0(\homeo_{0,2g+4}) }

\medskip

We observe that $\pi_1(\widetilde{\cU})$ is the fiber product in the square. Hence by the universal property there exists a homomorphism

\[
\mu': \pi_1(\cH^3_g) \to \pi_1(\widetilde{\cU}).
\]
In fact, $\mu'$ is injective since it factors the 
injective map $\pi_1(\cH^3_g)\to \pi_1(\cU)$.
Thus there exists an \'etale cover $\cH^3_g\to \widetilde{\cU}$. We claim that this map is an isomorphism. In fact, suppose that we have two different covers $C\to \PP^1$ and $C'\to \PP^1$ inside $\cH^3_g$ that map to the same element in $\widetilde{\cU}$. This means that they have the same set of liftable homeomorphisms of $\PP^1$, hence they must have the same branch data. This in turn implies that they are the same cover.

\medskip

Moreover, every element of $\homeof_{0,2g+4}$ lifts uniquely to
$\homeo_g$, because the simple covering $f$ has no covering
transformations. 
Therefore the monodromy map of the family $\mathcal{C}^3_g$ of curves of genus $g$ factors as

\begin{equation}\label{monodromfact}
\xymatrix{\mu_{g}:\pi_1(\cH^3_g) \ar@{->>}^-{\tilde\mu\circ\mu'}[r] &
\pi_0(\homeof_{0,2g+4})\ar[r] & 
\pi_0(\homeo_{g})= \map_g},
\end{equation}

where we wrote $\tilde{\mu}$ to shorten $\tilde{\mu}_{0,2g+4}$. Finally, our surjectivity claim follows from the result of Hilden
\cite[Thm. 4, p.994]{hilden}, which states that the last two groups of the sequence are isomorphic.
To phrase it in the words of \cite[p. 24-25]{bw}: Hilden proved that, if $C\to S^2$ is a simple $3$-sheeted branched covering of a $2$-sphere, then every homeomorphism of $C$ is isotopic to a lifting of a homeomorphism of $S^2$.
\qed
\end{proof}

\begin{rmk}
Let $M$ be a Maroni invariant and $X_{M,g}$ the restriction of the vector bundle $X_g$ to the fixed Maroni invariant matrix locus $\Omega_{2,g+2}^M$. Let moreover $\mathcal{D}_{M,g}$ be the discriminant inside $X_{M,g}$ and $X'_{M,g}$ its complement. The corresponding Maroni stratum has a presentation as quotient stack $[X'_{M,g}/\Gamma_g]$.
\end{rmk}

\begin{rmk}
\label{surjmon}
By the upcoming Theorem \ref{isomaroni} in the next section,
the monodromy map $\mu^{\calx}_g$ and its restriction descend to surjective maps 

\begin{eqnarray*}
\mu_g^{\mathcal{T}}: \pi_1^{orb}([X'_g/\Gamma_g]) & \rightarrow & \map_g;\\
\mu_g^M: \pi_1^{orb}([X'_{M,g}/\Gamma_g]) & \rightarrow & \map_g.
\end{eqnarray*}

except for $\mu_g^{\maxi}$ for $g\equiv 1\ (\mod\ 3)$.

\end{rmk}

\section{Hirzebruch Surfaces, Discriminants and their topological invariants} 

Before some detailed arguments, let us state the result that we aim to show along the rest of this section.

The main goal of the present section is to prove the following theorem.

\begin{thm}
\label{isomaroni}
If $M<g/3-1$, the inclusion 
$[X'_{M,g}/\Gamma_g]\hookrightarrow [X'_g/\Gamma_g]$ induces a surjection

$$\pi_1^{orb}([X'_{M,g}/\Gamma_g]) \surj \pi_1^{orb}([X'_g/\Gamma_g])$$

with kernel a homomorphic image of $\ZZ/(M)$.
\end{thm}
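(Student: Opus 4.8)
The plan is to trivialize the family over each Maroni stratum by using the homogeneity of $\Omega^M_{2,g+2}$. First I would observe that $\Omega^M_{2,g+2}$ is a single $\Gamma_g$-orbit: by Grothendieck's theorem the splitting type $(m,n)$ with $|m-n|=M$ determines the Tschirnhausen module up to isomorphism, and $\Gamma_g$ acts on the presentation (\ref{universalseq}) by changes of basis on source and target together with automorphisms of the rank-two cokernel, hence transitively on matrices with cokernel of a fixed isomorphism type. Fixing $\omega_M\in\Omega^M_{2,g+2}$ with stabilizer $G_M=\mathrm{Stab}_{\Gamma_g}(\omega_M)$, restriction of the quotient presentation yields an equivalence $[X'_{M,g}/\Gamma_g]\simeq[\mathcal{W}_M/G_M]$, where $\mathcal{W}_M$ is the fibre of $X'_{M,g}$ over $\omega_M$, i.e.\ the complement of the discriminant in the linear system of smooth trisections of the Hirzebruch surface $\mathbb{F}_M$. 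Since $\Gamma_g$ and $G_M$ are connected, the Remark after Definition \ref{orbifundamentalgp} identifies $\pi_1^{orb}([X'_{M,g}/\Gamma_g])$ with $\pi_1(\mathcal{W}_M)/\mathrm{im}\,\pi_1(G_M)$, and likewise the generic stratum models $\pi_1^{orb}([X'_g/\Gamma_g])$ up to the surjection induced by the open dense inclusion $\Omega^{\mini}_{2,g+2}\hookrightarrow\Omega_{2,g+2}$.

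Next I would compute $\pi_1(\mathcal{W}_M)$ by the braid-monodromy method of \cite{loenne-duke,loenne-weierstrass}, obtaining a presentation on half-twist generators $t_1,\dots,t_{2g+2}$ subject to the diagram- and global-type relations recorded in the last theorem of the introduction. The essential combinatorial point is that for every $M$ in the admissible range the braid monodromy of a generic trisection of $\mathbb{F}_M$ is encoded by the same Hurwitz system, so the generators and the diagram/global relations do not depend on $M$. Consequently the inclusion $[X'_{M,g}/\Gamma_g]\hookrightarrow[X'_g/\Gamma_g]$ carries the meridian generators of the stratum to the corresponding meridians of the whole discriminant complement; since the latter already generate $\pi_1^{orb}([X'_g/\Gamma_g])$, the induced map is surjective. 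This gives the asserted surjection $\pi_1^{orb}([X'_{M,g}/\Gamma_g])\twoheadrightarrow\pi_1^{orb}([X'_g/\Gamma_g])$.

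It remains to identify the kernel. The two presentations can differ only in their quotient-type relations, which record the image of $\pi_1(G_M)$ in $\pi_1(\mathcal{W}_M)$. The generic stratum and the stratum of invariant $M$ differ precisely by the monodromy of the one-parameter subgroup $\mathbb{C}^*\subset G_M$ scaling the $\ko(M)$-summand of $E$, i.e.\ by the rotation of the $2g+4$ branch points lying over the $(-M)$-section of $\mathbb{F}_M$. I would show that the class $z$ of this loop generates the kernel, that it dies in $\pi_1^{orb}([X'_g/\Gamma_g])$ (where the special position with respect to the negative section is no longer imposed), and that the quotient-type relation carried by the $\delta_0$-power forces $z^M=1$ already inside $\pi_1^{orb}([X'_{M,g}/\Gamma_g])$. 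Together these show the kernel is generated by a single element of order dividing $M$, hence is a homomorphic image of $\ZZ/(M)$. The hypothesis $M<g/3-1$ enters here through the intersection number $C\cdot C_0=(g+2-3M)/2$: it keeps this number at least $3$, so the branch points move in general position and the uniform braid-monodromy description used above remains valid.

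The step I expect to be the real obstacle is this last one: pinning down the monodromy of the $\mathbb{C}^*$-loop and proving that the resulting extra relation is exactly a power of $\delta_0$ of order dividing $M$, rather than merely a cyclic group of uncontrolled order. This demands tracking the half-twist generators under the rotation of the branch locus on $\mathbb{F}_M$, checking that the induced permutation is the expected $M$-fold cyclic shift, and verifying that no relation beyond this one is lost in passing to the whole stack; the careful bookkeeping of the map $\pi_1(G_M)\to\pi_1(\mathcal{W}_M)$ is where the argument has to be made precise.
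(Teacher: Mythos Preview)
Your overall architecture matches the paper: reduce to a single fibre $\mathcal{W}_M = F'_{M,g}$ via the transitivity of the $\Gamma_g$-action on $\Omega^M_{2,g+2}$ (this is the paper's Lemma \ref{altquot} and Prop.\ \ref{isop1quot}), then compare the images of $\pi_1(G_M)$ and $\pi_1(\Gamma_g)$. But the two crucial technical steps are handled quite differently, and in one of them your proposal has a real gap.

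\textbf{The isomorphism $\pi_1(\mathcal W_M)\cong\pi_1(X'_g)$.} You propose to obtain this by computing an explicit braid-monodromy presentation of $\pi_1(\mathcal W_M)$ for each $M$ and observing that the diagram- and global-type relations coincide with those in the last theorem of the Introduction. But that presentation is established in the paper (via \cite{loenne-weierstrass}) \emph{only} for the maximal Maroni stratum $3M=g+2$, where the linear system is $|3\sigma_0|$; for smaller $M$ the linear system is $|3\sigma_0+c(g,M)f|$ with $c(g,M)\geq 3$, and no uniform braid-monodromy computation is carried out. The paper sidesteps this entirely: it never presents $\pi_1(\mathcal W_M)$ for non-maximal $M$. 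Instead it uses (i) a Zariski hyperplane-section argument to replace $\mathcal W_M$ by the complement of a generic plane section $\mathfrak D$ of the dual variety $\mathbb F_M^*$; (ii) the Pl\"ucker argument of Prop.\ \ref{plucker} (this is exactly where the hypothesis $c(g,M)\geq3$, equivalently $M<g/3-1$, enters) to show that the numerical invariants of $\mathfrak D$ depend only on $g$, not on $M$; and (iii) a one-parameter degeneration over a disc $W\subset\Omega_{2,g+2}$ meeting $\Omega^M$ only at its centre, combined with Shimada's exact sequence (Thm.\ \ref{thmshim}), to transport the isomorphism from the generic Maroni fibre to the special one. This yields $\pi_1(F'_{M,g})\cong\pi_1(X'_g)$ (Prop.\ \ref{genmaro}) without ever writing down generators and relations. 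Your assertion that ``the braid monodromy \dots\ is encoded by the same Hurwitz system'' for all $M$ may well be true, but proving it is at least as hard as the theorem itself; the Pl\"ucker/degeneration route is what actually makes the argument go through.

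\textbf{The kernel.} Here too the paper is more direct than your proposal. Rather than chasing a specific $\mathbb C^*$-orbit inside $\mathcal W_M$ and arguing that the resulting class has order dividing $M$ via a $\delta_0$-power relation, the paper works purely on the group side: it identifies the preimage of $\Gamma_M$ in $G$ with $\operatorname{Aut}(\mathcal O(m)\oplus\mathcal O(n))\times GL_2$, writes down the explicit matrix $\left(\begin{smallmatrix}-m&-n\\ m+1&n+1\end{smallmatrix}\right)$ for the induced map on $\pi_1$, and reads off that $\operatorname{coker}\big(\pi_1(\Gamma_M)\to\pi_1(\Gamma_g)\big)$ is a quotient of $\ZZ/(n-m)=\ZZ/(M)$ (Lemma \ref{idealM}). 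A short diagram chase with the two exact rows
\[
\pi_1(\Gamma_{M})\to\pi_1(F'_{M,g})\to\pi_1^{orb}([X'_{M,g}/\Gamma_g])\to1,
\qquad
\pi_1(\Gamma_{g})\to\pi_1(X'_g)\to\pi_1^{orb}([X'_g/\Gamma_g])\to1,
\]
together with the isomorphism $\pi_1(F'_{M,g})\cong\pi_1(X'_g)$ already in hand, shows the kernel of the map on orbifold $\pi_1$'s is an image of that cokernel. No tracking of half-twists is needed; the ``obstacle'' you flag at the end simply does not arise in the paper's argument.
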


It states that the orbifold fundamental groups $\pi_1^{orb}([\Xmg - \Dmg/ \Gamma_g])$ of the Maroni strata depend only very mildly on the Maroni invariant $M$, except in the cases excluded, when $M$ is maximal for its genus. In our argument, we will use a powerful theorem by Shimada (\cite[Cor 1.1]{shimada}, see Thm. \ref{thmshim} of this paper) that - under some hypotheses - puts the fundamental group of a fibration and the fundamental group of a single fiber into a short exact sequence. In other words, we can then exploit the fundamental group of the complement of the discriminant inside one single fiber of $X_g\to \Omega_{2,g+2}$ over a chosen matrix $\omega_0$ in $\omg$, that is with prescribed Maroni invariant.

As we have already stated in section \ref{sect-trigostack}, the fibers of $X_g$ parametrize the sections of the Tschirnhausen module that give rise to trigonal curves. Moreover, the projectivized vector bundle obtained from the Tschirnhausen module is a Hirzebruch surface $\mathbb{F}_M$. Hence, the projectivized space of each fiber of $X_g\to \Omega_{2,g+2}^M$ can be interpreted as a linear system $|T|$ on $\mathbb{F}_M$. Notably, if we consider trigonal curves of genus $g$ and Maroni invariant $M$ (which must meet the conditions $0\leq M \leq \frac13(g+2)$ and $M\equiv_2 g$), then $|T|$ corresponds to the linear system of type
$|3\sigma_0+c(g,M)f|$ on $\mathbb{F}_M$, where $\sigma_0$ is the movable section with $\sigma_0^2=M$, $f$ the ruling of $\mathbb{F}_M$ and $c(g,M)$ is the numerical function of $g$ and $M$ defined as $2c(g,M) = g + 2 - 3 M$. This in fact follows from the adjunction formula (here we denote the negative section by $\sigma_\infty:=\sigma_0 - M f$): 
\begin{eqnarray*}
2g-2 & = & C \cdot ( C + K) = (3 \sigma_0 + c(g,M)f) (2\sigma_0 - \sigma_\infty + (c(g,M)-2) f) \\
& = & 6 M + 2c(g,M) - c(g,M) + 3c(g,M) - 6,  
\end{eqnarray*}
hence $g - 1 = 3 M + 2c(g,M) - 3$. Our linear system $|T|$ has projective dimension $N:=2(m+n)+3$.

\newcommand{\Dz}{\mathcal{D}_0}
\newcommand{\fkd}{\mathfrak{D}}
\newcommand{\fkb}{\mathfrak{B}}

Consider now a fiber $F$ of $X_g$ over a matrix $\omega_0\in \Omega_{2,g+2}$, let $\cald_0$ be the restriction of the discriminant to $F$. Recall that $\PP(F)\cong|T|=\PP^N$. In fact, more precisely
the discriminant $\cald_0$ is exactly the cone over the projective dual variety $\mathbb{F}_M^* \subset \PP(F)$ of the Hirzebruch surface $\mathbb{F}_M \subset \PP(F)^*\cong |T|^*$. 
Let $\PP(\Dz)$ be its associated projectivized space inside $\PP(F)$. In order to understand the fundamental group of the complement of $\Dz$ it will be enough to consider the fundamental group of the complement of a generic plane section $\mathfrak{D}:= \PP^2 \cap \PP(\Dz)$, thanks to Zariski's theorem on the fundamental group of hyperplane sections of divisor complements \cite{zariski-poincare}. The dual operation of taking a plane section is projecting onto a plane and considering the branch divisor. Hence we project $\mathbb{F}_M$ onto a general plane $\PP^2\subset \PP(F)^*$, and consider the branch divisor $\mathfrak{B}$. This divisor is the dual curve of $\mathfrak{D}$ and it is not hard to compute its topological invariants. Once one assumes that $\PP(\Dz)$ has mild singularities, the 
curves $\fkb,\mathfrak D$ form a dual pair of Pl\"{u}cker curves. Via the Pl\"{u}cker formulas we get then the topological invariants of $\fkd$, which are needed to control the fundamental group of its complement. The claim will follow since the Pl\"{u}cker characteristics of $\fkb$ depend only on $g, K^2$ and $C\cdot C$ which are clearly independent of the Maroni invariant.

\subsection{Topological properties of the branch curve}

Let us start with the examination of the branch curve $\fkb$ of
a generic projection to $\PP^2$ of the image of $\mathbb{F}_M$ embedded in the projective space $\PP(F)^*\cong|T|^*$. 
The following claim is taken from \cite[Thm.1.1]{ciliberto-flamini}
and \cite[Prop.2.6]{cmt}.

\begin{prop}[\cite{ciliberto-flamini}]
\label{numerical}
Let $S$ be a smooth surface in $\PP^n$ of degree $d$
and $\rho:S\to \PP^2$ be a general projection.
Let $K$ and $H$ be the canonical and the hyperplane class
of $S$ and $K^2$, $e(S)$ its Chern numbers.
Then the branch curve of $\rho$ is an irreducible plane curve
with no singularities except for ordinary cusps and nodes
and with the following numerical characteristics:
\begin{enumerate}
\item
the degree of the branch curve is $b=3d+KH = 3H^2 + KH$,
\item
the number of ordinary nodes is
$e(S)-3K^2+24d+b(b-15)/2$,
\item
the number of ordinary cusps is
$2K^2-e(S)-15 d + 9b$.
\end{enumerate}
\end{prop}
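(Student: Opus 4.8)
The statement is the classical numerology of the branch curve of a generic projection, and I would prove it by the standard machinery of general projections, organised in four steps.

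The first and decisive input is \emph{genericity of the singularities}. I would invoke the general projection theorem (in characteristic zero via Thom--Boardman transversality, or its algebraic incarnation) to guarantee that for a sufficiently general projection center the finite morphism $\rho\colon S\to\PP^2$ of degree $d=H^2$ has a branch curve $B$ which is irreducible and carries no singularities other than ordinary nodes and cusps. Concretely, away from finitely many points $\rho$ is a simple fold ($\Sigma^1$), the cusps are precisely the images of the isolated $\Sigma^{1,1}$ points, and the nodes are the images of pairs of distinct fold points with the same value and transverse branches. I expect this to be the main obstacle: it requires controlling the codimensions of the higher Thom--Boardman strata and excluding tacnodes, triple points, and worse; once it is secured the three characteristics reduce to intersection theory on $S$.

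Granting this, the \emph{degree} is immediate from Riemann--Hurwitz: writing $K_S=\rho^*K_{\PP^2}+R=-3H+R$ shows the ramification divisor is $R=K_S+3H$, a smooth curve for general center, and $\rho|_R\colon R\to B$ is birational. Intersecting with a general line $\ell\subset\PP^2$ (which pulls back to a general hyperplane section) counts the branch points over $\ell$, so $b=\deg B=R\cdot H=(K_S+3H)\cdot H=3H^2+KH=3d+KH$. For the \emph{cusps}, I would identify them with the $\Sigma^{1,1}$ points, i.e.\ the points where $R$ is tangent to the fibre direction of $\rho$, equivalently the ramification points of $\rho|_R\colon R\to\PP^2$. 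These form the zero locus of a natural section of a line bundle on $R$ built from $d\rho$, and a Porteous / Thom-polynomial computation expresses their number as a Chern number of $S$, giving $\kappa=2K^2-e(S)-15d+9b$.

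Finally the \emph{nodes} follow formally. Since $R$ is smooth and $\rho|_R$ is the normalisation of $B$, the geometric genus of $B$ equals $g(R)$, which adjunction on $S$ computes via $2g(R)-2=R\cdot(R+K_S)=(K_S+3H)\cdot(2K_S+3H)$. On the other hand a plane curve of degree $b$ with $\delta$ nodes and $\kappa$ cusps has geometric genus $\binom{b-1}{2}-\delta-\kappa$. Equating the two and substituting the values of $b$ and $\kappa$ already obtained solves for $\delta$ in terms of $b$, $K^2$, $e(S)$ and $d$, producing the stated node count; equivalently one may count the double points of the immersed curve $R\to\PP^2$ directly via the double-point formula after discarding the cuspidal (ramified) points. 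Collecting the three steps yields all the characteristics of Proposition~\ref{numerical}.
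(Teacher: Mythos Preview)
The paper does not give its own proof of this proposition: it is quoted verbatim as a known result, attributed to \cite[Thm.~1.1]{ciliberto-flamini} and \cite[Prop.~2.6]{cmt}, and used as a black box. So there is no argument in the paper to compare your proposal against.

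That said, your sketch is the standard classical derivation and is essentially what one finds in the cited sources: genericity of the projection forces the branch curve to be nodal--cuspidal (this is indeed the only non-formal step), Riemann--Hurwitz gives $R\equiv K_S+3H$ and hence $b=R\cdot H=3d+KH$, the cusps are the $\Sigma^{1,1}$ points counted by a Porteous/Thom-polynomial computation, and the node count then drops out of the genus formula for $R$ compared with the Pl\"ucker formula for a plane curve of degree $b$. Nothing in your outline is wrong; it simply reproduces the argument that the paper is content to import.
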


\begin{cor}
\label{numerical2}
Let $S$ be a Hirzebruch surface embedded into $\PP(F)^*$
by the complete linear system $|T|$ of trigonal curves of
genus $g$, then the numerical characteristics of the branch curve of
a generic projection $\rho:S\to \PP^2$ depend only
on $g$.
\end{cor}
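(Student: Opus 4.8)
The plan is to reduce the statement to the four quantities $H^2$, $KH$, $K^2$ and $e(S)$ that enter the formulas of Proposition \ref{numerical}. Since the degree $b$, the number of nodes and the number of cusps are all explicit polynomial expressions in exactly these four invariants, it suffices to show that each of the four depends only on $g$. So the whole corollary will follow the moment I establish the $M$-independence of this short list.

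First I would recall the intersection theory on $\mathbb{F}_M$: the Picard group is generated by the movable section $\sigma_0$ and the ruling $f$, with $\sigma_0^2 = M$, $\sigma_0\cdot f = 1$ and $f^2 = 0$. The hyperplane class of the embedding by $|T|$ is $H = T = 3\sigma_0 + c(g,M)f$ with $2c(g,M) = g+2-3M$, while the adjunction computation carried out just above gives the canonical class $K = -2\sigma_0 + (M-2)f$. For the two purely topological invariants there is nothing to compute beyond standard facts: every Hirzebruch surface is a deformation of $\PP^1\times\PP^1$, so $K^2 = 8$, and, being a $\PP^1$-bundle over $\PP^1$, it has Euler number $e(S) = e(\PP^1)^2 = 4$; both are constant over the entire family, independent even of $g$.

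The remaining two invariants do a priori depend on $M$, through $\sigma_0^2 = M$ and through the $M$-dependent shape of both $H$ and $K$, so the real point is to watch the $M$-dependence cancel. A direct expansion gives
$$H^2 = 9M + 6c(g,M), \qquad KH = -3M - 2c(g,M) - 6,$$
and substituting $6c(g,M) = 3(g+2-3M)$ and $2c(g,M) = g+2-3M$ collapses both to $H^2 = 3(g+2)$ and $KH = -(g+8)$, free of $M$.

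The heart of the matter is precisely this cancellation: neither $H$ nor $K$ is $M$-independent on its own, and it is only the numerical constraint $2c(g,M)=g+2-3M$ tying $c(g,M)$ to $M$ that makes the offending terms disappear in the products. Granting it, I would substitute $d = H^2 = 3(g+2)$, $KH = -(g+8)$, $K^2 = 8$ and $e(S) = 4$ into Proposition \ref{numerical} and read off $b = 8g+10$ together with the node and cusp counts as explicit polynomials in $g$ alone, completing the argument. I do not expect a genuine obstacle here; the only care required is the bookkeeping in the intersection products, where the constraint on $c(g,M)$ must be invoked at exactly the right moment.
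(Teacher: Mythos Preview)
Your proof is correct and follows essentially the same approach as the paper: both reduce to showing that $K^2$, $e(S)$, $H^2$ and $KH$ depend only on $g$, then invoke Proposition~\ref{numerical}. The only cosmetic difference is that the paper, after computing $H^2=3g+6$, obtains $KH$ in one line from the adjunction formula $2g-2=H^2+KH$ rather than writing out $K=-2\sigma_0+(M-2)f$ and expanding the product directly as you do; the end result is the same.
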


\proof
The Chern numbers are invariantly equal to $K^2=8$ and
$e(S)=4$. By adjunction, $2g-2 = H^2 + KH$, and a quick
calculation shows
\[
H^2 \quad = \quad (3\sigma_0+c(g,M) f)^2 
\quad = \quad 3 g + 6.
\] 
So the claim follows from the preceding proposition.
\qed

Assuming that $\mathfrak D,\fkb$ form a dual pair of Plücker
curves, we could deduce that also the generic plane section
$\mathfrak D$ is a Plücker curve with numerical characteristic
invariant but for their dependence on $g$.

Alas so much can certainly not be assumed in case 
$M=(g+2)/3$ since the linear system in not very ample in
this case. The positive result we can give is the following.

\begin{prop}
\label{plucker}
Let $\bff_M$ be a Hirzebruch surface embedded into $\PP(F)^*\cong |T|^*$
by the complete linear system of trigonal curves of
genus $g$. Then a generic plane section $\mathfrak D$
of its dual $\bff_M^*$ is a Plücker curve with numerical
characteristics only depending on $g$, if $c(g,M)\geq3$.
\end{prop}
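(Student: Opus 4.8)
The plan is to exploit the projective duality, already set up in the text, between the generic plane section $\fkd$ of the dual variety and the branch curve $\fkb$ of a generic projection of $\bff_M$ to $\PP^2$, thereby reducing the claim to the facts about $\fkb$ recorded in Corollary~\ref{numerical2}. A general plane cuts the dual hypersurface $\bff_M^*=\PP(\Dz)\subset\PP(F)$ in the curve $\fkd$, and by biduality $\fkd$ is exactly the dual curve $\fkb^*$. Since the dual of a Pl\"ucker curve is again a Pl\"ucker curve---the nodes and cusps of $\fkd=\fkb^*$ being, respectively, the bitangents and the flexes of $\fkb$---it suffices to prove that $\fkb$ itself is a Pl\"ucker curve, after which the Pl\"ucker formulas recover the numerical invariants of $\fkd$ from those of $\fkb$.

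First I would record that the hypothesis $c(g,M)\geq 3$ makes the complete linear system $|T|=|3\sigma_0+c(g,M)f|$ very ample on $\bff_M$, so that $\bff_M$ embeds as a smooth surface $S\subset\PP(F)^*$; rewriting $|T|$ as $|3\sigma_\infty+(3M+c(g,M))f|$ and applying the standard very-ampleness criterion on a Hirzebruch surface, this holds already for $c(g,M)\geq 1$. Note that the ambient dimension $N=2(m+n)+3$ is the same for every $M$, so the distinction between strata is one of positivity of the embedding, not of its dimension. With $S$ smooth and nondegenerate I can then invoke the generic projection theorem of Ciliberto and Flamini (Proposition~\ref{numerical}): for a general projection $\rho:S\to\PP^2$ the branch curve $\fkb$ is irreducible and has no singularities beyond ordinary nodes and ordinary cusps.

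The heart of the matter---and the step where the stronger bound $c(g,M)\geq 3$ is genuinely needed---is to upgrade this to the assertion that $\fkb$ is a full Pl\"ucker curve: that besides having only nodes and cusps it has only simple flexes and only honest (non-degenerate) bitangents, equivalently that its dual $\fkd=\fkb^*$ again has only ordinary nodes and cusps. I would argue this by a dimension count on the space of projection centers: the positivity furnished by $c(g,M)\geq 3$ provides enough jets in $|T|$ that the Gauss map of $S$ is generically one-to-one with only the expected ramification, and the incidence loci of centers whose branch curve acquires a hyperflex, a flex at a node, a degenerate bitangent, or any worse singularity each have positive codimension. A general center therefore avoids them, so $\fkb$ is a Pl\"ucker curve and $(\fkd,\fkb)$ is a dual pair of Pl\"ucker curves. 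This transversality is precisely what breaks down when $M=(g+2)/3$, where $c(g,M)=0$ and $|T|$ fails to be very ample.

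Finally I would conclude the numerical claim. By Corollary~\ref{numerical2} the degree and the numbers of nodes and cusps of $\fkb$ depend only on $g$; since $(\fkd,\fkb)$ is a dual pair of Pl\"ucker curves, the Pl\"ucker formulas express the degree of $\fkd$ (the class of $\fkb$), its nodes (the bitangents of $\fkb$) and its cusps (the flexes of $\fkb$) through these data, so every Pl\"ucker characteristic of $\fkd$ depends only on $g$. The main obstacle is exactly this middle step: controlling the second-order projective geometry of the branch curve---its flexes and bitangents---rather than merely its nodes and cusps, and verifying that $c(g,M)\geq 3$ is the precise positivity threshold forcing the required genericity.
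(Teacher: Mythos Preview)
Your overall outline---dualize, invoke Ciliberto--Flamini for the branch curve, then transport numerical invariants via the Pl\"ucker formulas---matches the strategy the paper sets up before the proposition. But the step you yourself flag as ``the main obstacle'' is exactly where the proof lives, and you have not carried it out. Saying you ``would argue by a dimension count'' that a general projection center avoids hyperflexes, degenerate bitangents, etc., is not yet an argument; you do not identify which incidence loci to bound, nor do you explain where the threshold $c(g,M)\geq 3$ (rather than $\geq 1$, which already gives very ampleness) actually enters.

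The paper fills this gap differently and concretely. Rather than controlling the second-order geometry of $\fkb$, it works directly on $\fkd$: it stratifies the dual variety $\bff_M^*$ by the singularity type that a generic plane section acquires at each point, and argues that only codimension-one strata can contribute singularities to $\fkd$. For the stratum of degenerate tangent hyperplanes (producing the cusps of $\fkd$) it invokes an irreducibility criterion of Shimada, which asks that at every $p\in\bff_M$ the sections of $|T|$ together with $\mathfrak{m}_p^4$ generate $\ofami_p$. Writing the sections explicitly as $\mathrm{Span}(x^iy^j:\ j\leq 3,\ i\leq c(g,M)+jM)$ at a point $p\in\sigma_\infty$ shows that one needs the monomials $1,x,x^2,x^3$ in the $j=0$ row, and this is precisely the condition $c(g,M)\geq 3$. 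For the strata of multi-singular hyperplanes (producing the nodes of $\fkd$) the paper cites Tyomkin's irreducibility result for nodal curves not containing $\sigma_\infty$, and then disposes of the exceptional hyperplanes containing $\sigma_\infty$ by computing their codimension to be $c(g,M)+1\geq 4$, since $\sigma_\infty$ is embedded as a rational normal curve of degree $c(g,M)$. Those two concrete inputs are what your sketch is missing.
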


\proof
We have to show that $\mathfrak D$ has no singularities
except for ordinary nodes and cusps. 
Let us stratify $\bff_M^*$ according to singularity type of generic plane sections, i.e.\  two points belong to the same stratum
iff generic plane sections with $\bff_M^*$ through them
produce topologically equivalent plane curve germs. 
The only singularity types which can thus be present
for $\mathfrak D$ are those belonging to strata in $\bff_M^*$
of codimension $1$.

Such strata are open in the set of hyperplanes which have a
degenerate singular intersection with $\bff_M$ or in the set of
hyperplanes which have at least two singular intersections.
The first set is irreducible under the given hypotheses
by a result of Shimada \cite[Prop. 4.9]{shimada-dual}.
%
%
In fact his criterion is that at each point $p$ the sections corresponding to our linear system and the fourth power $\mathfrak m^4_p$ of the local ideal generate the local algebra $\ofami_p$. The former can be identified with

\[
Span(x^iy^j\!,\: j\leq 3, i\leq c(g,M) +(3-j)M)
\quad (\mbox{resp. } \:
Span(x^iy^j\!,\: j\leq 3, i\leq c(g,M) +jM))
\]

in case of $p\not\in\sigma_\infty$
(resp.\ $p\in\sigma_\infty$), so the criterion applies since $c(g,M)\geq 3$. By the
irreducibility thus established, this part gives rise exclusively to ordinary
cusp singularities of $\mathfrak D$. So other irreducible open parts can only come from sets of hyperplanes having
intersections with $S$ which are smooth except for ordinary
nodes of which there are at least two.

Now, thanks to a result of Tyomkin  \cite[Prop. 2.11]{tyom}, we know that the sets of such hyperplanes not containing $\sigma_\infty$ are irreducible of 
codimension in $\PP(F)$ equal to the number of nodes.
Hence it suffices to prove that the set of hyperplanes
containing $\sigma_\infty$ is of codimension at least $3$.

In fact $\sigma_\infty$ is embedded in $\PP(F)^*$ as the
rational normal curve of degree $c(g,M)$. The linear
subspace of $\PP(F)^*$ spanned by this image must be contained
in any hyperplane which contains $\sigma_\infty$. So
the codimension of the family of such hyperplanes is
$c(g,M)+1$, one more than the dimension of the
linear subspace containing $\sigma_\infty$.
\qed

\subsection{An input from Shimada}

As we have already anticipated, a key role will be played by Cor.1.1 of the paper \cite{shimada} by Shimada. Since we will use this result thoroughly, it seems worth to recall it here.  The framework is the following. Let $f:A\to B$ be a dominant morphism from a smooth variety $A$ to a smooth variety $B$, with a connected general fiber. Moreover we assume that there exists a nonempty Zariski open subset $B^\circ \subset B$ such that $f$ is locally trivial in the $\mathcal{C}^\infty$ category over $B^\circ$. Let us now choose a base point $b\in B^\circ$, put $F_b:=f^{-1}(b)$ and choose a base point $\tilde{b}\in F_b$. Then the inclusion $\imath:F_b \hookrightarrow A$ induces a homomorphism $\imath_*:\pi_1(F_b,\tilde{B})\to \pi_1(A,\tilde{b})$. Here is the statement from \cite{shimada}.

\begin{thm}\label{thmshim}
Suppose that the following three conditions hold true:

\begin{enumerate}
\item the locus $Sing(f)$ of critical points of $f$ is of codimension greater than 2 in $A$;
\item there exists a Zariski closed subset $\varXi$ of $B$ of codimension $\geq 2$ such that $F_y:=f^{-1}(y)$ is nonempty and irreducible for any $y\in B\setminus\varXi$,
\item there exists a subspace $C\subset B$ containing $b$ and a continuous cross-section $s_C:C\to f^{-1}(C)$ of $f$ over $C$ satisfying $s_C(C)\cap Sing(f)=\emptyset$ and $s_C(b)=\tilde{b}$ such that the inclusion $C\hookrightarrow B$ induces a surjection $\pi_2(C,b) \to \pi_2(B,b)$.
\end{enumerate}

Let $i_{A_*}: \pi_1(A^{\circ},\tilde{b})\to \pi_1(A,\tilde{b})$ be the homomorphism induced by the inclusion $\imath_A:A^\circ\hookrightarrow A$. Then $Ker(\imath_*)$ is equal to 

$$\mathcal{R}:=\langle \{g^{-1}g^{\mu(\gamma)}|g\in \pi_1(F_b,\tilde{b}),\gamma\in Ker(i_{A_*})\}\rangle,$$

and we have the exact sequence

\begin{equation}\label{seqshimada}
1 \to \pi_1(F_b,\tilde{b})\big/\!\big/ Ker(i_{A_*})\stackrel{\imath_*}{\tto} \pi_1(A,\tilde{b})\stackrel{f_*}{\tto} \pi_1(B,b) \to 1.
\end{equation}
where the first group is the factor group by $\mathcal R$.
\end{thm}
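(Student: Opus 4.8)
The plan is to derive the exact sequence (\ref{seqshimada}) from the long exact homotopy sequence of the $\mathcal{C}^\infty$ fibre bundle obtained by restricting $f$ over $B^\circ$, and then to control the passage to $A$ and $B$ by a meridian analysis governed by the three codimension/section hypotheses. Write $A^\circ=f^{-1}(B^\circ)$, so that $f$ restricts to a locally trivial bundle $F_b\hookrightarrow A^\circ\to B^\circ$ with connected fibre. Its homotopy sequence
\[
\pi_2(B^\circ,b)\xrightarrow{\;\partial\;}\pi_1(F_b,\tilde b)\xrightarrow{\;j\;}\pi_1(A^\circ,\tilde b)\xrightarrow{\;f^\circ_*\;}\pi_1(B^\circ,b)\to 1
\]
gives at once that $f^\circ_*$ is onto, $\ker f^\circ_*=\operatorname{im}j$ and $\ker j=\operatorname{im}\partial$; moreover the bundle monodromy furnishes the action $\mu$ of $\pi_1(B^\circ)$, and hence of $\pi_1(A^\circ)$ through $f^\circ_*$, on $\pi_1(F_b)$, realised inside $\pi_1(A^\circ)$ by conjugation, $\gamma\, j(g)\,\gamma^{-1}=j\!\left(g^{\mu(\gamma)}\right)$.

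Next I would compare this with the situation over all of $B$. Since $A\setminus A^\circ=f^{-1}(B\setminus B^\circ)$ and $B\setminus B^\circ$ are proper closed, hence real codimension $\geq 2$, subsets of the smooth varieties $A$ and $B$, both $i_{A_*}\colon\pi_1(A^\circ)\to\pi_1(A)$ and the induced map $\beta\colon\pi_1(B^\circ)\to\pi_1(B)$ are surjective, with kernels normally generated by meridians around the codimension-one components of the respective complements. The key geometric input, where conditions (1) and (2) enter, is that every meridian $m_Z$ around a divisorial component $Z\subset B\setminus B^\circ$ lifts to a meridian $\tilde m_Z\in\ker i_{A_*}$ with $f^\circ_*(\tilde m_Z)=m_Z$: since $\operatorname{codim}\varXi\geq 2$ the generic fibre over $Z$ is irreducible, and since $\operatorname{codim}\mathrm{Sing}(f)\geq 3$ there is, over a general point of $Z$, a reduced codimension-one component of $A\setminus A^\circ$ mapping isomorphically onto a transverse disc, producing the required lift. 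Surjectivity of the induced map $f_*\colon\pi_1(A)\to\pi_1(B)$ is then immediate from $f_*\circ i_{A_*}=\beta\circ f^\circ_*$, giving the right-hand map of (\ref{seqshimada}).

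A diagram chase now identifies $\ker f_*$ with $\operatorname{im}\imath_*$, where $\imath_*=i_{A_*}\circ j$. Given $x\in\ker f_*$, choose $\tilde x\in\pi_1(A^\circ)$ with $i_{A_*}(\tilde x)=x$; then $f^\circ_*(\tilde x)\in\ker\beta$ is a product of meridians $m_Z$, and correcting $\tilde x$ by the corresponding lifts $\tilde m_Z$, which die in $\pi_1(A)$, moves $\tilde x$ into $\ker f^\circ_*=\operatorname{im}j$ without changing $x$, so $x\in\operatorname{im}\imath_*$. Thus $\pi_1(A)/\operatorname{im}\imath_*\cong\pi_1(B)$, and it remains to compute $\ker\imath_*\subseteq\pi_1(F_b)$, the group by which the fibre is divided in (\ref{seqshimada}). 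An element $g$ lies in $\ker\imath_*$ iff $j(g)\in\ker i_{A_*}$, and using the conjugation formula, for $\gamma\in\ker i_{A_*}$ one has $j\!\left(g^{-1}g^{\mu(\gamma)}\right)=j(g)^{-1}\,\gamma\,j(g)\,\gamma^{-1}\in\ker i_{A_*}\cap\operatorname{im}j$, so that $\mathcal R\subseteq\ker\imath_*$; the content is the reverse inclusion, together with $\operatorname{im}\partial\subseteq\mathcal R$.

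The main obstacle is precisely this last point, and it is where hypothesis (3) is indispensable. I expect that the boundary contribution $\operatorname{im}\partial$ from $\pi_2(B^\circ)$ must be shown to lie already in the monodromy subgroup $\mathcal R$. The cross-section $s_C$ over $C$ splits off the genuinely spherical part: since $\pi_2(C,b)\twoheadrightarrow\pi_2(B,b)$, every $2$-sphere in $B$ is, up to the section, captured over $C$ and so contributes trivially to $\partial$, while the $2$-spheres of $B^\circ$ that become null-homotopic only after filling in $B\setminus B^\circ$ are exactly those that can be capped off across a divisorial component, and their $\partial$-image is a monodromy commutator of the form $g^{-1}g^{\mu(\tilde m_Z)}$ already in $\mathcal R$. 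Establishing this cleanly — tracking how a capped-off $2$-sphere degenerates into a meridian loop and reading its boundary as a monodromy relation, with the codimension bounds (1) and (2) guaranteeing that no lower-dimensional contributions intervene — is the delicate heart of the argument. Combining the reverse inclusion $\ker\imath_*\subseteq\mathcal R$ with the surjection $\pi_1(A)\twoheadrightarrow\pi_1(B)$ and the identification $\ker f_*=\operatorname{im}\imath_*\cong\pi_1(F_b)/\mathcal R$ then yields (\ref{seqshimada}).
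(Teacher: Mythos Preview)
The paper does not prove this theorem at all: it is quoted verbatim as Corollary~1.1 of Shimada's paper \cite{shimada}, with the authors stating explicitly that ``since we will use this result thoroughly, it seems worth to recall it here.'' So there is no proof in the present paper to compare your attempt against.

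That said, your outline is the natural strategy and the first half is essentially correct: the homotopy sequence of the bundle over $B^\circ$, surjectivity of $i_{A_*}$ and $\beta$, the meridian-lifting to get surjectivity of $f_*$, and the diagram chase for $\ker f_*=\operatorname{im}\imath_*$ are all standard and sound (your justification for lifting meridians is slightly garbled---what you really use is that away from $\mathrm{Sing}(f)$ the map is a submersion, so a transverse disc to $Z$ at a generic point lifts via a local section to a disc in $A$ whose boundary is the desired lift in $\ker i_{A_*}$---but the conclusion is right).

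The genuine gap is exactly where you flag it yourself: you do not prove the inclusion $\ker\imath_*\subseteq\mathcal R$, nor that $\operatorname{im}\partial\subseteq\mathcal R$. You write ``I expect that\ldots'' and describe the mechanism only heuristically. This is precisely the nontrivial content of Shimada's result, and it requires a careful argument: one must show that any relation in $\pi_1(F_b)$ that dies in $\pi_1(A)$ can be accounted for by monodromy relations coming from $\ker i_{A_*}$, and the role of the section $s_C$ with $\pi_2(C)\twoheadrightarrow\pi_2(B)$ is to kill the boundary map $\partial$ on the part of $\pi_2(B^\circ)$ that survives to $\pi_2(B)$, leaving only the ``tube'' classes around the divisorial components, whose boundary images are indeed of the form $g^{-1}g^{\mu(\gamma)}$. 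Turning this into a proof involves a genuine homotopy-theoretic construction (in Shimada's paper, an analysis of how null-homotopies in $A$ interact with the stratification), and your sketch does not supply it. As written, your argument establishes the exact sequence only modulo the unproved identification $\ker\imath_*=\mathcal R$.
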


We remark that the natural projection map $\Xmg ' \to \Omega_{2,g+2}$ is smooth and has no critical points. Hence the hypothesis of Thm \ref{thmshim} are true and we can apply it.

\subsection{Families with non-constant Maroni invariant}

Let $F_{M,g}$ be one fiber of the vector bundle $X_g\to\Omega_{2,g+2}$ with Maroni invariant $M$. We will denote by $F'_{M,g}$ the complement of the discriminant inside $F_{M,g}$. We recall that we denote by $\mini$ and $\maxi$ the minimal and the maximal Maroni invariant.

Since $\Omega_{\mini}$ is dense in $\Omega_{2,g+2}$, we can
find a disc $W\subset \Omega_{2,g+2}$ with center $O$ such that $W\cap \Omega_M=\{O\}$ and $W - \{O\} \subset \Omega_{\mini}$. Without loss of generality, shrinking $W$ if necessary, we have a fibration in generic (with respect to $\mathcal{D}$) $\CC^3$-subspaces over $W$ contained in $X_g$. This means that we get a flat family of cones over projective curves obtained by intersecting the discriminants with the $\CC^3$-fibration.  We obtain a diagram of inclusions

\begin{equation}\label{triviallocalsystem}
\xymatrix{ F_M'  \ar@{^{(}->}[r] & X_g' & \ar@{_{(}->}[l] F_{\mini}' \\
F'_M \cap \CC^3 \ar@{^{(}->}[u]\ar@{^{(}->}[r]  & X_g'\cap \CC^3\times W \ar@{^{(}->}[u]
&  F_{\mini}'\cap \CC^3 \ar@{_{(}->}[l] \ar@{^{(}->}[u] \\ }
\end{equation}
where $F'_{\mini}$ is the fiber of $X_g' \to \Omega_{2,g+2}$ over some $\omega_1 \neq O\in W\cap \Omega_{\mini}$ and $F'_M$ is the one over $\omega_0=O\in W$.

\begin{prop}
\label{minmaro}
If $g\neq4$
there is an isomorphism of fundamental groups

$$\pi_1(F'_{\mini}) \stackrel{\sim}{\tto} \pi_1(X'_g).$$

\end{prop}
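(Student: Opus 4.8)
The plan is to apply Shimada's Theorem~\ref{thmshim} to the projection $f\colon X'_g\to\Omega_{2,g+2}$, whose fiber over a point of the dense minimal stratum $\Omega_{\mini}$ is exactly $F'_{\mini}$, and then to read the isomorphism off the exact sequence~\eqref{seqshimada} once the base is shown to be simply connected and the ``defect group'' $\mathcal R$ is shown to vanish. So I take $A=X'_g$, $B=\Omega_{2,g+2}$, $B^\circ=\Omega_{\mini}$ (over which the fiber type is constant and $f$ is $\mathcal C^\infty$-locally trivial) and $A^\circ=f^{-1}(\Omega_{\mini})\cap X'_g$, with $F_b=F'_{\mini}$.

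First I would verify the three hypotheses of Theorem~\ref{thmshim}. Condition (i) is immediate: $f$ is the restriction of a vector-bundle projection, hence smooth with $\ON{Sing}(f)=\emptyset$, as already remarked after Theorem~\ref{thmshim}. Condition (ii) holds with $\varXi=\emptyset$, since every fiber $F'_y$ is the complement of the discriminant hypersurface inside the affine fiber of $X_g$, hence nonempty and irreducible. For condition (iii) the key point is that $\Omega_{2,g+2}$ is obtained from the affine space $\Mat_{2,g+2}$ by deleting a closed subset of codimension $\geq 2$ (Lemma~\ref{purecod}, together with the observation that the further conditions defining $\Omega_{2,g+2}$, in particular $m,n\geq\frac{g+2}{3}$, fail only in higher codimension). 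Removing a subset of complex codimension $\geq 2$ from an affine space leaves $\pi_1$ and $\pi_2$ unchanged, so $\pi_1(\Omega_{2,g+2})=\pi_2(\Omega_{2,g+2})=1$; condition (iii) then holds trivially by taking $C=\{b\}$ with constant section $s_C(b)=\tilde b$.

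With the hypotheses in place, \eqref{seqshimada} reads
$$1\to\pi_1(F'_{\mini})\big/\mathcal R\stackrel{\imath_*}{\tto}\pi_1(X'_g)\stackrel{f_*}{\tto}\pi_1(\Omega_{2,g+2})\to1,$$
and since $\pi_1(\Omega_{2,g+2})=1$ the map $\imath_*$ is surjective with kernel $\mathcal R$. It remains to show $\mathcal R=1$, i.e.\ that $\ON{Ker}(i_{A_*})$ acts trivially on $\pi_1(F'_{\mini})$ through the generators $g^{-1}g^{\mu(\gamma)}$. Now $\ON{Ker}(i_{A_*})$ is normally generated by the meridians of the codimension-one components of $f^{-1}(\Omega_{2,g+2}-\Omega_{\mini})$, and the stratum $\Omega^M_{2,g+2}$ with $M>\mini$ has codimension $M-1$ (it is the locus of unbalanced bundles of splitting defect $M$). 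When $g$ is odd the smallest non-minimal invariant is $M=3$, so the whole boundary has codimension $\geq 2$; hence $i_{A_*}$ is an isomorphism, $\ON{Ker}(i_{A_*})=1$, $\mathcal R=1$, and the claim follows at once.

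The hard part will be the even case, where $\Omega^2_{2,g+2}$ has codimension exactly $1$ and contributes a single meridian $\gamma$ to $\ON{Ker}(i_{A_*})$: one must show that the monodromy $\mu(\gamma)$ of the fibration around $\Omega^2_{2,g+2}$ acts trivially on $\pi_1(F'_{\mini})$ precisely when $g\neq 4$. Here I would use the local model~\eqref{triviallocalsystem}: choosing a disc $W$ meeting $\Omega^2_{2,g+2}$ transversally at its center $O$ and passing to a generic $\CC^3$-slice, $\mu(\gamma)$ becomes the monodromy of a one-parameter family of plane-curve complements whose generic member is the complement of the generic plane section $\mathfrak D$. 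The decisive input is Corollary~\ref{numerical2}: the Pl\"ucker characteristics of $\mathfrak D$ and of its dual branch curve $\mathfrak B$ depend only on $g$ and not on the Maroni invariant, so the $\mathbb F_0\rightsquigarrow\mathbb F_2$ degeneration across $\Omega^2_{2,g+2}$ should be equisingular and carry no vanishing cycle, forcing $\mu(\gamma)=\ON{id}$. This argument breaks down exactly at $g=4$, the unique even genus for which the relevant stratum is maximal with $c(g,\maxi)=0$ (equivalently $g\equiv 1\ (\mathrm{mod}\ 3)$ and $M=(g+2)/3$): there $|T|$ fails to be very ample, Proposition~\ref{plucker} no longer applies, and the monodromy is genuinely nontrivial, generating the $\ZZ/2$ that reappears as the kernel in Theorem~\ref{isomaroni}. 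Controlling these vanishing cycles cleanly for all even $g\geq 6$ is the technical heart of the proof, and the exclusion of $g=4$ is exactly what it forces.
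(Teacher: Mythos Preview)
Your overall strategy coincides with the paper's: apply Shimada's Theorem~\ref{thmshim} to $X'_g\to\Omega_{2,g+2}$, use Lemma~\ref{purecod} to get $\pi_1=\pi_2=1$ on the base, and reduce to showing that the monodromy subgroup $\mathcal R$ is trivial. Your treatment of the hypotheses and of the odd-$g$ case (complement of $\Omega_{\mini}$ has codimension $\geq 2$, so $\ker(i_{A_*})=1$) is correct and matches the paper.

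The gap is in your handling of even $g$. You pass to the $\CC^3$-slice and argue that the family of plane sections $\mathfrak D_t$ over the disc $W$ is equisingular because the Pl\"ucker invariants are constant, invoking Corollary~\ref{numerical2} and Proposition~\ref{plucker}. But Proposition~\ref{plucker} at the central fibre $M=2$ requires $c(g,2)=(g-4)/2\geq 3$, i.e.\ $g\geq 10$; for $g=6,8$ you do not know that the central $\mathfrak D$ has only nodes and cusps, and your equisingularity claim is unsupported there. (Also, the inference ``same Pl\"ucker invariants $\Rightarrow$ equisingular family over $W$'' needs an argument that nodes and cusps do not collide over the centre; this is plausible for a generic $\CC^3$, but you have not said it.) So the exclusion of $g=4$ alone is not what your route forces.

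The paper avoids this by dropping one dimension further and using a $\CC^2$-slice (diagram~\eqref{triviallocalsystem2}): the fibrewise intersection with the discriminant is then a cone over finitely many points, i.e.\ $\CC^2$ minus $d$ lines through the origin with $d=\deg\mathbb F_M^*$. For $g>4$ the linear system is still very ample at $M=2$, so $d$ is constant and a generic $\CC^2$ meets the discriminant transversally over all of $W$; the resulting family over the contractible disc is therefore trivial, hence the meridian acts trivially on $\pi_1$ of the $\CC^2$-slice, and by Zariski surjectivity also on $\pi_1(F'_{\mini})$. This only needs $c(g,2)\geq 1$, which is exactly $g>4$. Finally, your remark that for $g=4$ the monodromy is ``genuinely nontrivial, generating the $\ZZ/2$ in Theorem~\ref{isomaroni}'' is not established here (and that theorem does not even apply in that range); the paper simply excludes $g=4$.
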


\begin{proof}
The strategy is to apply Thm.\ \ref{thmshim} to the sequence of maps $F'_{\mini} \to X'_g \to \Omega_{2,g+2}$. In order to do this, we need to verify some properties of the projection $X'_g\to \Omega_{2,g+2}$. First, $\pi_1(\Omega_{2,g+2}) =\pi_2(\Omega_{2,g+2})=1$ since the complement of $\Omega_{2,g+2}$ in $\Mat_{g+2,g+4}$ has codimension at least 2 by Lemma \ref{purecod}.
Moreover, all fibers of $X'_g\to \Omega_{2,g+2}$ are irreducible, the map has no critical points and the minimal Maroni stratum $X'_{\mini}$ is contained in the part where the fibration is locally trivial in the differentiable category.
Thus we can apply Thm.\ \ref{thmshim} and we get
\[
1 \to \pi_1(F'_{\mini})\big/\!\big/ Ker(i_{X'_{g}*})\stackrel{\imath_*}{\tto} \pi_1(X'_g) \to 1.
\]
To get the claim we need to show that the subgroup $\mathcal R$ of Thm.\ref{thmshim} is trivial.
If $g$ is odd,  the minimal Maroni stratum $X'_{\mini} \subset X'_g$ has complement of codimension at least 2, since in this case on the minimal Maroni strata $n$ and $m$ are not equal. Hence it is clear that $Ker(i_{X'_{g}*})$ is trivial.

In case $g$ is even, $n=m$ on the minimal Maroni strata, hence the complement in $X'_g$ is codimension one. However, we can consider a diagram in the spirit of (\ref{triviallocalsystem}) with
fibrewise restriction to one dimension less:
\begin{equation}\label{triviallocalsystem2}
\xymatrix{ F_M'  \ar@{^{(}->}[r] & X_g' & \ar@{_{(}->}[l] F_{\mini}' \\
F'_M \cap \CC^2 \ar@{^{(}->}[u]\ar@{^{(}->}[r]  & X_g'\cap \CC^2\times W \ar@{^{(}->}[u]
&  F_{\mini}'\cap \CC^2 \ar@{_{(}->}[l] \ar@{^{(}->}[u] \\ }
\end{equation}
On the bottom row we get a fibration with each fibre a
complement of a cone over finitely many points.
In case $M=\mini-2$ and $g>4$ this fibration is trivial,
accordingly the action on fundamental groups is trivial.
We infer that the action of $Ker(i_{X'_{g}*})$ is trivial, so
$\mathcal R$ is trivial and we get our claim.
\qed
\end{proof}

The following proposition is needed in order to relate the fundamental group of a fiber with given Maroni invariant to the fundamental group of the total space.

\begin{prop}\label{genmaro}
Let $j_M:F'_{M,g} \hookrightarrow X'_g$ be the embedding of the complement of the discriminant inside a fiber $F_{M,g}$ with any given Maroni invariant $M$. The induced map $j_{M*}$ of fundamental groups is an isomorphism $\pi_1(F'_{M,g}) \stackrel{\sim}{\tto} \pi_1(X_g')$.
\end{prop}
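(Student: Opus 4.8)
The plan is to reduce the general Maroni invariant case to the already-established minimal case of Proposition \ref{minmaro}. The key observation is that Proposition \ref{minmaro} proves $\pi_1(F'_{\mini}) \stackrel{\sim}{\tto} \pi_1(X'_g)$, so it suffices to show that for any Maroni invariant $M$, the inclusion $j_M: F'_{M,g} \hookrightarrow X'_g$ induces the same isomorphism on fundamental groups. Both inclusions factor through the total space $X'_g$, and the strategy is again to exploit Theorem \ref{thmshim} applied to the sequence $F'_{M,g} \to X'_g \to \Omega_{2,g+2}$, exactly as in the proof of Proposition \ref{minmaro}.

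First I would verify that the hypotheses of Theorem \ref{thmshim} hold for the fibration $X'_g \to \Omega_{2,g+2}$ with the fiber taken over a matrix $\omega_0$ of arbitrary Maroni invariant $M$. The conditions on $\Omega_{2,g+2}$, namely $\pi_1(\Omega_{2,g+2}) = \pi_2(\Omega_{2,g+2}) = 1$ and the smoothness and irreducibility of the fibration, were already established in the proof of Proposition \ref{minmaro} and do not depend on the choice of Maroni stratum; they follow from Lemma \ref{purecod}. Applying Theorem \ref{thmshim} then yields the short exact sequence collapsing to
\[
1 \to \pi_1(F'_{M,g})\big/\!\big/ Ker(i_{X'_{g}*})\stackrel{\imath_*}{\tto} \pi_1(X'_g) \to 1,
\]
since $\pi_1(\Omega_{2,g+2})$ is trivial. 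As before, the content reduces to showing that the relation subgroup $\mathcal{R}$ of Theorem \ref{thmshim} is trivial, equivalently that $Ker(i_{X'_{g}*})$ acts trivially, where $i_{X'_g}$ is the inclusion of the Maroni stratum into $X'_g$.

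The main obstacle, and the crux of the argument, is controlling $Ker(i_{X'_{g}*})$ for an \emph{arbitrary} stratum rather than the minimal one. For the minimal stratum this kernel is trivial (in the odd genus case) or its action is trivial by a local-triviality argument (in the even case). For a general stratum $X'_{M,g}$, the complement inside $X'_g$ can have codimension one, and the stratum need not lie in the locally trivial part of the fibration, so the kernel need not be trivial. The plan here is to use the local picture of diagram (\ref{triviallocalsystem}): choose a disc $W$ transverse to the stratum $\Omega_M$ so that a fibrewise restriction to low-dimensional generic subspaces yields a fibration whose fibres are complements of cones over finitely many points. The key point is that the vanishing cycle data, and hence the conjugation action on $\pi_1(F'_{M,g})$, is governed by the monodromy of this local fibration around the stratum. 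One shows this local monodromy acts trivially on fundamental groups, using that the generic plane-section curves are Pl\"ucker curves with numerical characteristics depending only on $g$ (Proposition \ref{plucker} and Corollary \ref{numerical2}), so the topology of the fibre complement is independent of $M$. This forces $\mathcal{R}$ to be trivial and gives the desired isomorphism. I would expect the delicate part to be the verification that the relevant local system is trivial across \emph{all} intermediate strata simultaneously, which is where the Maroni-independence of the Pl\"ucker data does the real work.
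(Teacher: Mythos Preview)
Your approach has a genuine gap. You propose to apply Theorem~\ref{thmshim} with the base point $\omega_0$ taken in an \emph{arbitrary} Maroni stratum $\Omega^M_{2,g+2}$. But Shimada's theorem, as stated, requires the base point $b$ to lie in the open set $B^\circ$ over which the fibration is locally trivial in the $C^\infty$ category. For the map $X'_g\to\Omega_{2,g+2}$ this locus is (contained in) the minimal Maroni stratum; a point $\omega_0$ with $M>\mini$ lies in its complement. So the exact sequence you write down with $F'_{M,g}$ as the fibre is not available from Theorem~\ref{thmshim} as formulated. You acknowledge that ``the stratum need not lie in the locally trivial part'', but then proceed to use the sequence anyway and try to control $\mathcal R$; the problem is one step earlier --- the sequence itself is not yet justified.

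The paper avoids this obstacle by \emph{not} re-running Shimada at the non-minimal stratum. Instead it uses diagram~(\ref{triviallocalsystem}) directly as a commutative square of inclusions and argues on $\pi_1$: the two outer vertical arrows are isomorphisms by Zariski's hyperplane theorem, the bottom horizontal arrows are isomorphisms because Proposition~\ref{plucker} guarantees the generic plane sections $\mathfrak D$ for Maroni invariants $M$ and $\mini$ are Pl\"ucker curves with identical numerical invariants (hence the cone complements in $\CC^3$ have isomorphic $\pi_1$), and the top-right map is an isomorphism by Proposition~\ref{minmaro}. A diagram chase then forces the top-left map $\pi_1(F'_M)\to\pi_1(X'_g)$ to be an isomorphism. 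The Pl\"ucker data you invoke is indeed the crucial input, but it is used to compare the $\CC^3$-slices of two different fibres directly, not to analyse a monodromy action inside a Shimada sequence.
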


\begin{proof}
The claim is true for the minimal Maroni stratum by Prop.\ \ref{minmaro}. Now let us consider once again diagram (\ref{triviallocalsystem}) and the $\pi_1$ of the spaces involved. We observe that the horizontal arrows in the bottom row induce isomorphisms thanks to Prop.\ref{plucker}, whereas the vertical arrows (left and right) induce isomorphisms by Zariski Theorem on generic sections \cite{zariski-poincare}. Finally, the map $\pi_1(F_{\mini}')\to \pi_1(X_g')$ is an isomorphism thanks to Prop.\ \ref{minmaro}, thus we conclude that all arrows need to induce isomorphisms. \qed
\end{proof}

\subsection{Some alternative quotient presentations}

Let us start with a few set-theoretical observations on the presentation of the  quotient stack.

We know we have a surjection $G \ra \Gamma_g$. We want to compare the subgroups in $G$ and in $\Gamma_g$ that stabilize matrices belonging to different strata of the moduli space. Let $x_0$ be a point in the fiber of the vector bundle $X_g\to \Omega_{2,g+2}$ over $\omega_0\in \Omega_{2,g+2}$. The point $\omega_0$ is naturally identified with a matrix. We can associate to $\omega_0$ the Maroni invariant $M$ since $\omega_0$ completely defines the splitting type of the Tschirnhausen module. As before, $\Xmg$ (respectively $\omg$) will denote the locus inside $X_g$ (resp.\ inside $\Omega_{2,g+2}$) that corresponds to that Maroni invariant. In the following, $G_{M,g}$ will be the stabilizer inside $G$ of $\omega_0\in \Omega_{2,g+2}^M$ and $\Gamma_{M,g}$ the respective stabilizer inside $\Gamma_g$. Let us denote by $F_{M,g}$ the fiber over $\omega_0$, that contains $x_0$. Then $G_{M,g}\neq \Gamma_{M,g}$ but their orbits inside $F_{M,g}$ are the same, since the kernel of the natural map $G_{M,g} \to \Gamma_{M,g}$ acts trivially. 
Since every $\Gamma_g$-orbit on $X'_{M,g}$ intersects with
$F'_{M,g}$ in a $\Gamma_{M,g}$-orbit we have set-theoretical bijections between

\begin{equation}\label{quotstacks}
[X'_{M,g}/G],\quad[X'_{M,g}/\Gamma_g],\quad[F'_{M,g}/\Gamma_{M,g}]\quad\text{and}\quad[F'_{M,g}/G_{M,g}].
\end{equation}

More is true in fact:

\begin{lemma}\label{altquot}
Let $F'_{M,g}\subset X'_{M,g}$ be as above and $\Gamma_*\subset \Gamma$ groups which act respectively on $F'_{M,g}$ and
$X'_{M,g}$. If the injection as a subspace induces a bijection between the respective orbits,
then it induces an isomorphism
\[
\pi_1^{orb}([F'_{M,g}/\Gamma_*] \quad\cong\quad
\pi_1^{orb}([X'_{M,g}/\Gamma])
\]
\end{lemma}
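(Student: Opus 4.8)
The plan is to realize $\Xmgp$ as a space \emph{induced} from $F'_{M,g}$ and then transport this structure through the Borel construction underlying Definition \ref{orbifundamentalgp}. Recall that $\pi_1^{orb}([Y/H],H\cdot y)$ is by definition $\pi_1([Y\times EH/H])$, and that for the subgroup $\Gamma_*\subset\Gamma$ one may take $E\Gamma$ itself as a model for $E\Gamma_*$, since $\Gamma_*$ acts freely on the contractible space $E\Gamma$. With this choice the inclusions $F'_{M,g}\hookrightarrow\Xmgp$ and $\Gamma_*\hookrightarrow\Gamma$ induce the comparison map
$$\Phi:\ [F'_{M,g}\times E\Gamma/\Gamma_*]\ \tto\ [\Xmgp\times E\Gamma/\Gamma]$$
whose effect on $\pi_1$ is exactly the homomorphism of the statement. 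It therefore suffices to prove that $\Phi$ is a homotopy equivalence.

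The key step, and what I expect to be the main obstacle, is to show that the natural $\Gamma$-equivariant map
$$\Psi:\ \Gamma\times_{\Gamma_*}F'_{M,g}\ \tto\ \Xmgp,\qquad [\gamma,f]\mapsto\gamma\cdot f,$$
is a homeomorphism. Surjectivity is the surjectivity half of the orbit bijection: every $\Gamma$-orbit in $\Xmgp$ meets $F'_{M,g}$. For injectivity, suppose $\gamma_1 f_1=\gamma_2 f_2$ with $f_i\in F'_{M,g}$; then $f_1,f_2$ lie in a single $\Gamma$-orbit, so by the injectivity half of the orbit bijection they lie in a single $\Gamma_*$-orbit, say $f_2=\gamma_* f_1$. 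It then remains to absorb the resulting ambiguity $\gamma_1^{-1}\gamma_2\gamma_*\in\mathrm{Stab}_\Gamma(f_1)$ into $\Gamma_*$, and here I would use that the projection $X_g\to\Omega_{2,g+2}$ is $\Gamma$-equivariant: since $f_1$ lies over $\omega_0$, one has $\mathrm{Stab}_\Gamma(f_1)\subseteq\mathrm{Stab}_\Gamma(\omega_0)=\Gamma_{M,g}=\Gamma_*$, which yields $[\gamma_1,f_1]=[\gamma_2,f_2]$. It is this stabilizer containment, rather than the orbit bijection by itself, that forces $\Psi$ to be injective, and it is supplied precisely by the discussion preceding the lemma. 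That the continuous equivariant bijection $\Psi$ is moreover a homeomorphism I would deduce from the local sections of the principal bundle $\Gamma\to\Gamma/\Gamma_*$, which trivialize both sides over the base orbit and exhibit $\Psi$ fibrewise as the identity on $F'_{M,g}$.

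Granting the homeomorphism $\Psi$, the conclusion is formal. Applying the Borel construction to $\Psi$ and invoking the standard identification
$$\left(\Gamma\times_{\Gamma_*}F'_{M,g}\right)\times_\Gamma E\Gamma\ \cong\ F'_{M,g}\times_{\Gamma_*}E\Gamma,$$
which is again valid because $E\Gamma$ is a model for $E\Gamma_*$, shows that $\Phi$ is a homeomorphism of the two spaces computing the orbifold fundamental groups. Taking $\pi_1$ then gives the asserted isomorphism
$$\pi_1^{orb}([F'_{M,g}/\Gamma_*])\ \cong\ \pi_1^{orb}([\Xmgp/\Gamma]).$$
I note that this route handles the group components automatically, so that no connectedness hypothesis on $\Gamma$ or $\Gamma_*$ is required; the entire content is concentrated in the identification of $\Xmgp$ with the induced space $\Gamma\times_{\Gamma_*}F'_{M,g}$.
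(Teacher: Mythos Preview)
Your argument is correct and follows the same skeleton as the paper's proof: both pass to the Borel construction $F'_{M,g}\times_{\Gamma_*}E\Gamma \to X'_{M,g}\times_{\Gamma}E\Gamma$ and assert this map is a homotopy equivalence. The paper, however, dispatches this in one line (``the bijection of orbits \dots\ induces a homotopy equivalence''), while you actually justify it by identifying $X'_{M,g}$ with the induced space $\Gamma\times_{\Gamma_*}F'_{M,g}$ and then invoking the standard isomorphism $(\Gamma\times_{\Gamma_*}F)\times_\Gamma E\Gamma\cong F\times_{\Gamma_*}E\Gamma$.

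Your proof is in fact sharper than the paper's on one point. You correctly observe that the orbit bijection by itself does \emph{not} force $\Psi$ to be injective (equivalently, does not force the Borel map to be a weak equivalence); one needs in addition $\mathrm{Stab}_\Gamma(f)\subseteq\Gamma_*$ for $f\in F'_{M,g}$. A simple counterexample to the bare orbit hypothesis is $\Gamma=\ZZ/2$ acting trivially on a point $X=F=\{*\}$ with $\Gamma_*=\{1\}$: the orbit map is a bijection but the Borel spaces are $E\Gamma$ and $B\Gamma$. You supply the missing stabilizer condition from the equivariance of the projection $X_g\to\Omega_{2,g+2}$ together with $\Gamma_*=\Gamma_{M,g}=\mathrm{Stab}_\Gamma(\omega_0)$, exactly as intended by the phrase ``as above'' in the lemma. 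This is a genuine clarification of the paper's argument.

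The only place to be slightly more careful is the passage from the continuous equivariant bijection $\Psi$ to a homeomorphism: your appeal to local sections of $\Gamma\to\Gamma/\Gamma_*$ is fine, but it is worth noting explicitly that $\Gamma_g$ acts transitively on $\Omega^M_{2,g+2}$ (this is what the paper asserts just before the lemma), so that the base of both bundles really is $\Gamma/\Gamma_*$.
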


\begin{proof}
Let us use the shorthand notation $F$ and $X$ for $F'_{M,g}$
and $X'_{M,g}$.
The bijection of orbits is induced by the embedding $F\subset X$
and induces a homotopy equivalence from which we conclude
our claim:
\begin{eqnarray*}
F\times E\Gamma\big/\raisebox{-1mm}{$\Gamma_*$} \:\simeq\:
X\times E\Gamma\big/\raisebox{-1mm}{$\Gamma$}
& \implies &
\pi_1\left(F\times E\Gamma\big/\raisebox{-1mm}{$\Gamma_*$} \right)\:\cong\:
\pi_1\left(X\times E\Gamma\big/\raisebox{-1mm}{$\Gamma$}\right)\\
& \implies & \,\,\,\quad\pi_1^{orb}([F/\Gamma_*] \quad\cong\quad
\pi_1^{orb}([X/\Gamma])
\end{eqnarray*}
\qed\end{proof} 

\begin{prop}\label{isop1quot}

There is a sequence of isomorphisms

$$\pi_1^{orb}([X'_{M,g}/G])\cong\pi_1^{orb}([X'_{M,g}/\Gamma_g])\cong\pi_1^{orb}([F'_{M,g}/\Gamma_{M,g}])\cong\pi_1^{orb}( [F'_{M,g}/G_{M,g}]).$$

\end{prop}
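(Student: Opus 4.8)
The plan is to prove the three displayed isomorphisms one at a time, sorting them into two kinds. The middle isomorphism
\[
\pi_1^{orb}([X'_{M,g}/\Gamma_g])\cong\pi_1^{orb}([F'_{M,g}/\Gamma_{M,g}])
\]
compares quotients of \emph{different spaces} by a \emph{subgroup}, and is precisely the content of Lemma \ref{altquot}. The two outer isomorphisms
\[
\pi_1^{orb}([X'_{M,g}/G])\cong\pi_1^{orb}([X'_{M,g}/\Gamma_g]),\qquad
\pi_1^{orb}([F'_{M,g}/\Gamma_{M,g}])\cong\pi_1^{orb}([F'_{M,g}/G_{M,g}])
\]
instead fix the space and change the acting group along the surjection $G\twoheadrightarrow\Gamma_g$ (respectively $G_{M,g}\twoheadrightarrow\Gamma_{M,g}$), whose kernel is the central $\CC^*=\eta(\CC^*)$, which acts trivially; these I would treat by a gerbe argument. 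Composing the three then yields Proposition \ref{isop1quot}.

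For the middle isomorphism I would simply invoke Lemma \ref{altquot} with $\Gamma=\Gamma_g$, $\Gamma_*=\Gamma_{M,g}$, $X=X'_{M,g}$ and subspace $F=F'_{M,g}$. The only hypothesis to verify is that the inclusion induces a bijection on orbits, and this has already been recorded in the discussion preceding (\ref{quotstacks}): the kernel of $G_{M,g}\to\Gamma_{M,g}$ acts trivially, and every $\Gamma_g$-orbit on $X'_{M,g}$ meets $F'_{M,g}$ in a single $\Gamma_{M,g}$-orbit. Thus Lemma \ref{altquot} applies verbatim, the possible disconnectedness of $\Gamma_{M,g}$ causing no trouble, since the lemma is proved through a homotopy equivalence of Borel constructions.

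For each outer isomorphism I would argue as follows. The kernel of $G\to\Gamma_g$ is $\eta(\CC^*)\cong\CC^*$, which is central and acts trivially on $X'_{M,g}$; hence the induced map of Borel constructions $X'_{M,g}\times_G EG \to X'_{M,g}\times_{\Gamma_g}E\Gamma_g$ is the identity on the fibres $X'_{M,g}$ and therefore fits into a fibration sequence whose homotopy fibre equals the homotopy fibre of $BG\to B\Gamma_g$, namely $B\CC^*\simeq\CC P^\infty$ (it is just the Borel model of a $\CC^*$-gerbe). Since $\CC P^\infty$ is connected and simply connected, the long exact sequence of homotopy groups collapses to an isomorphism on $\pi_1$, that is, on $\pi_1^{orb}$. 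The identical reasoning, now applied to $G_{M,g}\to\Gamma_{M,g}$ whose kernel is again $\CC^*=\eta(\CC^*)$ (it lies in $G_{M,g}$ because it stabilizes $\omega_0$) and acts trivially on $F'_{M,g}$, gives the remaining isomorphism. This homotopy-fibre formulation is exactly what lets us ignore the possible disconnectedness of the stabilizers $G_{M,g}$ and $\Gamma_{M,g}$: only $\pi_0$ and $\pi_1$ of the fibre $B\CC^*$ enter the exact sequence, and both vanish.

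The step to get right—more a point of care than a genuine obstacle—is the identification of the two outer comparison morphisms as $\CC^*$-gerbes and the consequent simple-connectivity of the homotopy fibre $B\CC^*$; once this is secured, the $\pi_1$-isomorphisms are forced, and the middle isomorphism is a direct citation of Lemma \ref{altquot}.
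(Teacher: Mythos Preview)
Your argument is correct and close in spirit to the paper's, though the bookkeeping is organised a little differently. You use Lemma \ref{altquot} once (for the middle isomorphism) and the $\CC^*$-gerbe/fibration argument twice (for both outer isomorphisms). The paper instead applies Lemma \ref{altquot} twice --- once with $\Gamma_{M,g}\subset\Gamma_g$ and once with $G_{M,g}\subset G$, the latter yielding directly $\pi_1^{orb}([F'_{M,g}/G_{M,g}])\cong\pi_1^{orb}([X'_{M,g}/G])$ --- and then handles the remaining comparison $\Gamma_{M,g}$ vs.\ $G_{M,g}$ by the same long-exact-sequence diagram chase that underlies your gerbe argument. One small point: you hedge about possible disconnectedness of $G_{M,g}$ and $\Gamma_{M,g}$, but the paper simply observes that all four acting groups are connected, which streamlines the exact sequences; your fibre argument with $B\CC^*$ is of course insensitive to this either way.
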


\begin{proof}
In all quotients considered in (\ref{quotstacks}),
we remark that the acting group is connected, thus $\pi_0(\Gamma_g)=\pi_0(\Gamma_{M,g})=\pi_0(G_{M,g})=1$. The upshot is that the orbifold fundamental group of these spaces does not change if their presentations change. Two isomorphisms follow from Lemma \ref{altquot}, the last one from the following diagram:

\[\xymatrix{ \pi_2([X'_g/\Gamma_{M,g}]) \ar[r]  & \pi_1(\Gamma_{M,g}) \ar[r] & \pi_1(X'_g) \ar@{->>}[r] & \pi_1([X'_g/\Gamma_{M,g}]) \ar[r] & 1 \\
\pi_2([X'_g/G_{M,g}]) \ar[r]  & \pi_1(G_{M,g}) \ar@{->>}[u] \ar[r] & \pi_1(X'_g) \ar@{=}[u] \ar@{->>}[r] & \pi_1([X'_g/G_{M,g}]) \ar[u] \ar[r] & 1\\ }\]
In fact the $\pi_1$ of the quotients on the right are seen to be isomorphic by diagram chase. 
\qed\end{proof}

\medskip

The next aim of this subsection is to understand the natural map
\[
\pi_1(\Gamma_M) \to \pi_1(\Gamma_g),
\]
for $M$ as usual a Maroni invariant. 
The corresponding rank 2 vector bundle has a splitting type 
$(m,n)$, with $n-m=M$ and $n+m=g+2$. 
We identify the preimage of $\Gamma_M$ under the
map $G\to \Gamma_g$ with the group
$Aut(\ko(m)\oplus\ko(n))\times GL_2$.
Let us observe that under this identification the natural projection
of $G$ onto its last factor $GL_2$ gives
\begin{enumerate}
\item
the trivial map, when restricted to the first factor of
$Aut(\ko(m)\oplus\ko(n))\times GL_2$,
\item
an isomorphism, when restricted to the second factor
of $Aut(\ko(m)\oplus\ko(n))\times GL_2$, even though
this second factor is not identified with the last factor of $G$.
\end{enumerate}

Moreover, the chosen identification restricts to the map

\begin{eqnarray*}
Aut(\ko(m)) \times Aut (\ko(n)) & \to & Aut H^0(\ko(m-1)\oplus \ko(n-1))^*  \times  Aut H^0(\ko(m) \oplus \ko(n))\\ & & \cong GL_{n+m} \times GL_{n+m+2}; \\
&&\\
\lambda, \mu & \mapsto & \left( \begin{array}{cc} \lambda^{-1}Id_m & 0 \\ 0 & \mu^{-1}Id_n \end{array} \right),\ \left( \begin{array}{cc} \lambda Id_{m+1} & 0 \\ 0 & \mu Id_{m+1} \end{array} \right).
\end{eqnarray*}

Hence, we get a commutative (with non exact rows) diagram of groups


\begin{equation}\label{pi-coset1}
\xymatrix{
Aut(\ko(m)) \times Aut(\ko(n)) \ar@{^{(}->}[r] \ar@{^{(}->}[d]  & Aut(\ko(n)\times \ko(m))\times GL_2 \ar@{^{(}->}[d] \ar@{->>}[r] & \Gamma_M \ar[d]  \\
GL_{g+2} \times GL_{g+4} \ar@{^{(}->}[r] & GL_{g+2}\times GL_{g+4}\times GL_2 \ar[r] &  \Gamma_g \\
 }
\end{equation}

that in turn induces the following diagram of fundamental groups

\begin{equation}\label{groups}
\xymatrix{
\ZZ \times \ZZ \ar[r] \ar@{^{(}->}[d]_v  & \pi_1(Aut(\ko(n)\times \ko(m)))\times \ZZ \ar[d]_w \ar[r] & \pi_1(\Gamma_M) \ar[d]^{\rho}  \\
\ZZ \times \ZZ \ar[r] & \ZZ \times \ZZ \times \ZZ \ar[r] &  \pi_1(\Gamma_g),  \\
 }
\end{equation}

where the maps are named for further use.

\begin{lemma}\label{idealM}
The cokernel of the RHS vertical map $\rho: \pi_1(\Gamma_M) \to \pi_1(\Gamma)$ is a homomorphic image of $\ZZ\big/\raisebox{-1mm}{\!(M)}$.
\end{lemma}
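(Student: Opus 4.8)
The plan is to carry out the whole argument at the level of diagram (\ref{groups}) and to exhibit $\ON{coker}(\rho)$ as a quotient of a single cyclic group. First I would pin down the target. From the central fibration $\CC^*\xrightarrow{\eta}G\to\Gamma_g$ and the vanishing of $\pi_2$ of a connected reductive group, one obtains a short exact sequence $0\to\ZZ\xrightarrow{\eta_*}\pi_1(G)\to\pi_1(\Gamma_g)\to 0$. Using determinants to identify $\pi_1(G)=\pi_1(GL_{g+4}\times GL_{g+2}\times GL_2)\cong\ZZ^3$, the winding numbers of the determinants of $\eta(t)=(Id_{g+4},tId_{g+2},t^{-1}Id_2)$ give $\eta_*(1)=(0,g+2,-2)$, so $\pi_1(\Gamma_g)\cong\ZZ^3/\langle(0,g+2,-2)\rangle$ (recall $g+2=m+n$).

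Next I would describe $\im(\rho)$. The left vertical maps of (\ref{groups}) are surjective, since they arise from the same kind of central fibration for $\Gamma_M$; hence $\rho$ factors through $w$ and $\im(\rho)$ is exactly the image of $w(\ZZ^3)$ in $\pi_1(\Gamma_g)$, where $\ZZ^3=\pi_1(Aut(\ko(n)\times\ko(m)))\times\ZZ$ is the source of $w$. Therefore
\[
\ON{coker}(\rho)\;\cong\;\ZZ^3\big/\bigl(w(\ZZ^3)+\ZZ\cdot(0,g+2,-2)\bigr).
\]
It remains to evaluate $w$ on the three standard generators: $a,b$, the loops in the two scalars $\lambda,\mu$ of the diagonal torus of $Aut(\ko(n)\times\ko(m))$, and $c$, a generator of $\pi_1(GL_2)$. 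From the explicit block description recalled just before (\ref{pi-coset1}), combined with property (i), the loops $a,b$ are trivial on the last $GL_2$ factor, and computing winding numbers of determinants on $H^0(\ko(m-1)\oplus\ko(n-1))^*$ (dimensions $m,n$, scalar weights $-1$) and on $H^0(\ko(m)\oplus\ko(n))$ (dimensions $m+1,n+1$, scalar weights $+1$) yields $w(a)=(m+1,-m,0)$ and $w(b)=(n+1,-n,0)$. For $c$ the only feature I need is its last coordinate: by property (ii) the projection onto the final factor $GL_2$ is an isomorphism, so $w(c)=(\ast,\ast,\pm1)$ with unit last entry.

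With these values the computation collapses. Since $w(c)$ has a unit third coordinate, it can be used to eliminate the third coordinate of $\ZZ^3$: projection to the first two coordinates induces $\ZZ^3/\langle w(c)\rangle\cong\ZZ^2$, under which $w(a),w(b)$ map to $(m+1,-m),(n+1,-n)$ and $(0,g+2,-2)$ maps to some element $(\alpha,\beta)$. Thus $\ON{coker}(\rho)$ is a quotient of $\ZZ^2/\langle(m+1,-m),(n+1,-n)\rangle$. The matrix $\left(\begin{smallmatrix}m+1 & n+1\\-m & -n\end{smallmatrix}\right)$ has determinant $m-n=-M$, and its entries have gcd $1$ because $\gcd(m,m+1)=1$; hence its Smith normal form is $\mathrm{diag}(1,M)$ and the quotient is cyclic of order $M$, i.e.\ $\cong\ZZ/(M)$. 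Quotienting further by the image $(\alpha,\beta)$ of $\eta_*$ can only shrink it, so $\ON{coker}(\rho)$ is a homomorphic image of $\ZZ/(M)$, as claimed.

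I expect the main obstacle to be the bookkeeping in the third step: getting $w(a)=(m+1,-m,0)$ and $w(b)=(n+1,-n,0)$ correct requires keeping straight which section space carries which scalar weight and dimension, and hence which $GL$-factor records $\pm m$ versus $\pm(m+1)$. Everything downstream is forced once the key determinant equals $\pm M$; in particular the first two coordinates of $w(c)$ never enter the final answer, which is precisely what keeps the argument clean.
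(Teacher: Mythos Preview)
Your proof is correct and follows essentially the same approach as the paper: both compute the image of $w$ via the explicit $3\times 3$ matrix whose $2\times 2$ block has determinant $\pm M$, conclude $\ON{coker}(w)\cong\ZZ/(M)$, and then deduce that $\ON{coker}(\rho)$ is a quotient of this (the paper does the last step via diagram~(\ref{pi-coset2}), you via the explicit identification $\pi_1(\Gamma_g)\cong\ZZ^3/\langle\eta_*\rangle$). One small slip of labeling: the surjectivity you need in order to identify $\im(\rho)$ with the image of $w(\ZZ^3)$ in $\pi_1(\Gamma_g)$ is that of the \emph{top-right horizontal} arrow of~(\ref{groups}) (coming from the fibration $\CC^*\to G_M\to\Gamma_M$), not of the left vertical map $v$.
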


\proof
Let us first notice that the vertical map $v: \ZZ \times \ZZ \to \ZZ \times \ZZ$ is given by the matrix $\left(\begin{smallmatrix} -m & -n \\ m+1 & n+1 
\end{smallmatrix} \right)$. Thus the image of the vertical map $w$ is generated by the columns of the matrix $\left(\begin{smallmatrix} -m & -n & * \\ m+1 & n+1 & * \\ 0 & 0 & 1 \end{smallmatrix}\right)$, where $*$ stands for possibly any integer value. The 1 in the bottom right entry is a consequence of our observation ii) here above.

Accordingly, the cokernel of $w$ is isomorphic to $\ZZ/(n-m)\cong \ZZ\big/\raisebox{-1mm}{\!(M)}$. It is now straightforward to see that we can plug the RHS commutative square from Diagram \ref{groups} into the following commutative diagram, with exact rows and columns.



\begin{equation}\label{pi-coset2}
\xymatrix{
1 \ar[r] & \pi_1(\CC^*) \ar@{=}[d] \ar[r] & \pi_1(G_M) \ar@{^{(}->}[d] \ar[r] & \pi_1 (\Gamma_M) \ar[d] \ar[r] & 1 \ar@{=}[d] \\
1 \ar[r] & \pi_1(\CC^*) \ar[r] & \pi_1(G) \ar@{->>}[d] \ar[r] & \pi_1 (\Gamma) \ar@{->>}[d] \ar[r] & 1 \\
 & & \ZZ\big/\raisebox{-1mm}{\!(M)} \ar@{->>}[r] & coker(\rho) \\ 
 }
\end{equation}
\qed

\subsection{Proof of Theorem \ref{isomaroni}}

Now we are ready to give a proof of our main Thm. \ref{isomaroni}.

\medskip

\proof
We have the following commutative diagram.

\begin{equation}
\xymatrix{\pi_1 (\Gamma_{M,g}) \ar[r] \ar[d] & \pi_1 (F_{M,g}) \ar[d] \ar[r] & \pi_1^{orb} ([F_{M,g}/\Gamma_{M,g}] )\ar[r]\ar[d] & 1\\
\pi_1 (\Gamma_g) \ar@{=}[d] \ar[r] & \pi_1 (X_{M,g}') \ar[d] \ar[r] & \pi_1^{orb} ([X'_{M,g}/\Gamma_g]) \ar[d] \ar[r] & 1 \\
\pi_1 (\Gamma_g) \ar[r] & \pi_1 (X_g') \ar[r] & \pi_1^{orb} ([X_g'/\Gamma_g]) \ar[r] & 1 \\ }
\end{equation}

The exact rows stem from the long exact homotopy sequence
associated to the respective group actions. The commutativity
of the squares on the left follows from the commutativity of
the underlying continuous maps, while on the right the maps
between the orbifold fundamental groups are defined in exactly
the way to make the diagram commutative.

Next we add more information to the diagram. Let us note that by
Prop.\ \ref{genmaro} the composition of the vertical maps
in the middle is an isomorphism, hence the first factor is injective
the second is surjective. Hence in the bottom right square all maps are surjective. Finally the map at the right top
of the diagram is an isomorphism thanks to Lemma \ref{altquot}.

Discarding the middle row and using the isomorphism at the
right top we get a new commutative diagram, where the
central map is an isomorphism by Prop.\ \ref{genmaro} and 
thus can be transversed in both directions.
\begin{equation}
\xymatrix{\pi_1 (\Gamma_{M,g}) \ar[r] \ar[d] & \pi_1 (F_{M,g}) \ar@{<->}^{\cong}[d] \ar@{->>}[r] & \pi_1^{orb} ([X'_{M,g}/\Gamma_g]) \ar@{->>}[d] \ar[r] & 1 \\
\pi_1 (\Gamma_g) \ar[r] & \pi_1 (X_g') \ar@{->>}[r] & \pi_1^{orb} ([X_g'/\Gamma_g]) \ar[r] & 1 \\ }
\end{equation}
Thus the kernel we are interested in is the homomorphic image
of $\pi_1(\Gamma_g)$, since all its elements come from
elements in the kernel of the surjection in the bottom row.

Moreover the image of $\pi_1(\Gamma_{M,g})$ in $\pi_1(\Gamma_g)$ maps to the zero of 
$\pi_1^{orb} ([X'_{M,g}/\Gamma_g])$.
We conclude with the help of Lemma \ref{idealM} that our kernel
is the homomorphic image of $\ZZ/(M)$.
\qed

\section{Presentation of the fundamental group}\label{sect-presentation}

The goal of the present section is to give a complete presentation of the orbifold fundamental group of the Maroni stratum in case 
$3M=g+2$. This is the case where the upper bound for the
Maroni invariant $\frac13(g+2)$ is attained. Our main theorem is the following.

\begin{thm}\label{presentgroup}
The orbifold fundamental group 
$\pi_1^{orb}[X'_{\maxi,g}/\Gamma_g])$ of the maximal Maroni stratum, in the case $g \equiv 1\ (\mod\ 3)$ has a presentation in terms of generators $t_1,\dots, t_{2g+2}$, and relations:

\begin{itemize}
\item of "diagram type"
\begin{eqnarray*}
t_it_jt_i = t_jt_it_j & \text{if }\ j=i+1\ \text{ or }\ j=i+2;\\
t_it_j=t_jt_i & otherwise, 
\end{eqnarray*}

and

\begin{equation*}
(t_it_jt_i^{-1})t_k=t_k(t_it_jt_i^{-1})\ \text{ if }\ i+1=j=k-1. 
\end{equation*}




\item of "global type":
denote $\delta_0=t_1t_2\: t_3t_4 \cdots t_{2g+1}t_{2g+2}$

\begin{eqnarray*}
\delta_0 & \text{centralizes} & t_{2g+1} t_{2g-1} \cdots t_3 t_1, \\
&& t_{2g+2} t_{2g} \cdots t_4 t_2.
\end{eqnarray*}

\item of "quotient type".

let us denote $\delta_1=t_{2g+1}t_{2g+2}\: 
t_{2g-1} t_{2g} \cdots t_1t_2.$

\begin{eqnarray*}
(\delta_0\delta_1)^3 & = & 1\\
\delta_0^{g+2} &= &1. 
\end{eqnarray*}

\end{itemize}
\end{thm}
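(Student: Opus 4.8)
The plan is to realize $\pi_1^{orb}([X'_{\maxi,g}/\Gamma_g])$ as an extension of $\pi_1(\Omega^{\maxi}_{2,g+2})$ by the fundamental group of the discriminant complement in a single fibre, and to produce generators and relations from each of these two pieces. By Proposition \ref{isop1quot} I may replace the quotient $[X'_{\maxi,g}/\Gamma_g]$ by $[F'_{\maxi,g}/\Gamma_{\maxi,g}]$, so the computation takes place inside one fibre $F_{\maxi,g}$, whose projectivization is the linear system $|T|=|3\sigma_0+c\,f|$ on the Hirzebruch surface $\bff_{\maxi}$. First I would identify the generators $t_1,\dots,t_{2g+2}$: these are the Dehn-twist-type / meridian generators attached to the $2g+2=\deg\fkb$ branch points of a generic plane section, i.e.\ they come from the presentation of $\pi_1(\PP^2\setminus\fkb)$ for the dual Pl\"ucker curve $\fkb$ furnished by Proposition \ref{plucker} together with Zariski's theorem on generic sections. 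The ``diagram type'' relations then encode the local structure of $\fkb$: the braid relations $t_it_jt_i=t_jt_it_j$ record the ordinary cusps (where two branch points collide in a triangular pattern, giving adjacent indices $j=i+1$), the relation with $j=i+2$ records the specific node/cusp incidences dictated by the trigonal Hurwitz structure, the commutation relations record pairs of branch points in ``general position'', and the final commutator relation $(t_it_jt_i^{-1})t_k=t_k(t_it_jt_i^{-1})$ records the node configurations. This is exactly the Artin-type presentation appearing in \cite{loenne-duke} and \cite{loenne-weierstrass}, and I would invoke those results to read off the fibre group $\pi_1(F'_{\maxi,g})$.

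Next I would pass from the fibre group to the orbifold group. The ``global type'' and ``quotient type'' relations arise from the quotient by $\Gamma_{\maxi,g}$, i.e.\ from $\pi_1(\Gamma_{\maxi,g})$ mapping into $\pi_1(F'_{\maxi,g})$ and being killed in the orbifold group, together with the monodromy action coming from $\pi_1(\Omega^{\maxi}_{2,g+2})$. Concretely, in the maximal Maroni case the base stratum $\Omega^{\maxi}_{2,g+2}$ is itself a quotient of a space whose fundamental group contributes the rotational symmetry of the Hirzebruch surface $\bff_{\maxi}=\PP(\ko(m)\oplus\ko(m))$ when $m=n$ (which is the $g\equiv1\,(\mathrm{mod}\,3)$ case). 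The element $\delta_0=t_1t_2\cdots t_{2g+1}t_{2g+2}$ is the total monodromy, the product of all meridians, which in Zariski's presentation corresponds to a loop around the line at infinity; the relation $\delta_0^{g+2}=1$ records that the degree of the projection, and hence the order of the central element of the braid group of the sphere, is $g+2$ (this is the sphere-braid-group relation of Zariski's original computation on $\PP^1$). The centralizing relations for $\delta_0$ single out the two ``halves'' $t_{2g+1}t_{2g-1}\cdots t_1$ and $t_{2g+2}t_{2g}\cdots t_2$ of the odd- and even-indexed generators, reflecting the two sections $\sigma_0$ of the ruling that get exchanged/preserved under the monodromy; these come from the $\Gamma$-action identifying different coordinate systems on $\bff_{\maxi}$. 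The relation $(\delta_0\delta_1)^3=1$ encodes the order-three structure intrinsic to trigonal (as opposed to hyperelliptic) covers.

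The implementation I would follow is: (i) write down Zariski's presentation for $\pi_1(\PP^2\setminus\fkb)$ using the Pl\"ucker data ($\deg\fkb$, number of nodes, number of cusps) supplied by Corollary \ref{numerical2} and Proposition \ref{plucker}, which are $g$-dependent only; (ii) use the van Kampen / braid monodromy description of the relations to obtain the diagram-type relations, matching the local normal forms of nodes and cusps to the word relations; (iii) identify $\pi_1(F'_{\maxi,g})$ with this discriminant-complement group via the cone-versus-dual-curve correspondence established before Subsection~3.1; (iv) compute the image of $\pi_1(\Gamma_{\maxi,g})\to\pi_1(F'_{\maxi,g})$ explicitly in terms of $\delta_0,\delta_1$, using the description of $\Gamma_{\maxi,g}=Aut(\ko(m)\oplus\ko(m))\times GL_2$ and Diagram \ref{groups}, and quotient by it to obtain the global- and quotient-type relations; (v) verify that no further relations appear by a rank/Euler-characteristic count or by appeal to the explicit presentations of \cite{loenne-duke,loenne-weierstrass}.

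The main obstacle I anticipate is step (ii)--(iv), namely correctly matching the braid-monodromy relations of the branch curve $\fkb$ to the stated word relations and, above all, pinning down precisely which loops in $\Gamma_{\maxi,g}$ produce $\delta_0^{g+2}=1$ and $(\delta_0\delta_1)^3=1$. The orders $g+2$ and $3$ must be derived from the geometry of $\bff_{\maxi}$ in the split case $\ko(m)\oplus\ko(m)$, where extra automorphisms (the full $GL_2$ acting on the two summands) enlarge the stabilizer and are responsible for the ``quotient type'' relations; disentangling the contribution of these automorphisms from the contribution of the base monodromy $\pi_1(\Omega^{\maxi}_{2,g+2})$, and checking that together they yield exactly the two quotient relations with no redundancy, is the delicate point. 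I expect this to require the careful bookkeeping of \cite{loenne-weierstrass}, transported to the trigonal setting via the explicit identification of the Tschirnhausen data, and this is where the case hypothesis $g\equiv1\,(\mathrm{mod}\,3)$ (equivalently $m=n$) enters essentially.
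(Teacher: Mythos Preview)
Your proposal contains several genuine errors that would prevent the argument from going through.

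\textbf{First, the dual-curve approach is unavailable here.} You invoke Proposition~\ref{plucker} to read off a Zariski--van Kampen presentation from the branch curve $\fkb$, but that proposition requires $c(g,M)\geq 3$, whereas the maximal Maroni case $3M=g+2$ has $c(g,M)=0$. The paper is explicit that the Pl\"ucker-curve comparison of Section~3 breaks down exactly here; that section is used only to show the fibre groups are \emph{independent of $M$} for $M$ non-maximal, not to compute any presentation. Moreover you write $2g+2=\deg\fkb$, but Corollary~\ref{numerical2} gives $\deg\fkb=3H^2+KH=8g+10$. The curve whose complement carries the fibre group is the plane section $\mathfrak D$ of the dual variety, not its Pl\"ucker dual $\fkb$; but even that is moot, since neither curve is available to you in the maximal case.

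\textbf{Second, you have the splitting type backwards.} The hypothesis $g\equiv 1\pmod 3$ with $M$ maximal means $M=(g+2)/3$ and splitting type $(m,n)=(M,2M)$, so the Tschirnhausen bundle is $\ko(M)\oplus\ko(2M)$, not $\ko(m)\oplus\ko(m)$. The equal-summand case $m=n$ is Maroni invariant $0$, the \emph{minimal} stratum for even $g$. Your explanation of the extra automorphisms and of the orders $3$ and $g+2$ is built on this misidentification and does not survive the correction.

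\textbf{Third, the relations are misattributed.} In the paper the ``global type'' relations are already part of the presentation of $\pi_1(\PP V'_\max)$ itself (Theorem~\ref{theorem2} and Corollary~\ref{recast}, quoted from \cite{loenne-weierstrass}); they do not come from the group action. Only the ``quotient type'' relations $(\delta_0\delta_1)^3=1$ and $\delta_0^{g+2}=1$ arise from dividing by $\CC^*\times GL_2$.

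\textbf{What the paper actually does.} After the reduction of Proposition~\ref{linsysquo} to $\pi_1^{orb}\big((\PP V_\max\setminus\dfami)/\CC^*\times GL_2\big)$, the presentation of $\pi_1(\PP V'_\max)$ is taken wholesale from \cite{loenne-weierstrass} and rewritten in the $t_i$ via an explicit change of free generators (Corollary~\ref{recast}); this gives the diagram and global relations. The quotient relations are then obtained by writing down four concrete $S^1$-orbits of the base point $y^3+x^{g+2}+1$ under $\CC^*\times\CC^*\times GL_2$, recognising their free homotopy classes as $\delta_0^3\delta_1^3$, $\delta_0^{2g+2}$, $\delta_1^{2g+2}$ via Proposition~\ref{freehomotopic}, and observing that killing these (and noting $\delta_0^{2g+2}=\delta_1^{2g+2}$) yields exactly $(\delta_0\delta_1)^3=1$ and $\delta_0^{g+2}=1$. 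None of this goes through Diagram~\ref{groups} or a van~Kampen computation on $\fkb$.
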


Here the relations of "diagram type" are encoded by the graph of
figure \ref{graphfig}.
\unitlength=1.6mm
\begin{figure}[h]
\begin{picture}(30,15)(-30,-3)

\put(0,0){\circle*{.3}}
\put(0,10){\circle*{.3}}
\put(10,0){\circle*{.3}}
\put(10,10){\circle*{.3}}
\put(20,0){\circle*{.3}}
\put(20,10){\circle*{.3}}
\put(30,0){\circle*{.3}}
\put(30,10){\circle*{.3}}
\put(40,0){\circle*{.3}}
\put(40,10){\circle*{.3}}

\put(1.5,1.5){\line(1,1){7}}
\put(11.5,1.5){\line(1,1){7}}
\put(31.5,1.5){\line(1,1){7}}

\put(1.5,0){\line(1,0){7}}
\put(1.5,10){\line(1,0){7}}
\put(11.5,0){\line(1,0){7}}
\put(11.5,10){\line(1,0){7}}
\put(31.5,0){\line(1,0){7}}
\put(31.5,10){\line(1,0){7}}

\put(0,1.5){\line(0,1){7}}
\put(10,1.5){\line(0,1){7}}
\put(20,1.5){\line(0,1){7}}
\put(30,1.5){\line(0,1){7}}
\put(40,1.5){\line(0,1){7}}

\put(22,5){\circle*{.5}}
\put(25,5){\circle*{.5}}
\put(28,5){\circle*{.5}}

\put(-1,-3){$2$}
\put(9,-3){$4$}
\put(19,-3){$\cdots$}
\put(29,-3){$\cdots$}
\put(39,-3){$2g+2$}

\put(-1,12){$1$}
\put(9,12){$3$}
\put(17,12){$\cdots$}
\put(29,12){$\cdots$}
\put(39,12){$2g+1$}

\end{picture}
\caption{}
\label{graphfig}
\end{figure}


\subsection{Comparison with Weierstrass parameter space}\label{compweier}

\renewcommand{\max}{M}
\newcommand{\Tgm}{\mathcal T_{g,m}}
\newcommand{\Aut}{\operatorname{Aut}}
\newcommand{\Hom}{\operatorname{Hom}}
\newcommand{\Cstartimes}{\CC^{^{\scriptstyle*}}\!\!\times\!}
\newcommand{\ibold}{i}
\newcommand{\jbold}{j}
\newcommand{\kbold}{k}


We are going to give a presentation of the orbifold fundamental
group of the trigonal stratum $[X'_{\max}/\Gamma_g]$ of Maroni invariant $\max$ in the
moduli space of curves of genus $g= 3\max -2$.

First we give an identification with the orbifold fundamental
group of another quotient.
To this end we give a concrete description of the linear system
$\PP V_\max$ associated to the divisor $3\s_0$ on $\FF_\max$.
Consider the isomorphism
\[
\begin{matrix}
\CC \:\times&\!\!\!\! \CC[x_1,x_0]_{\max} \:\times&\!\!\!\!   \CC[x_1,x_0]_{2\max} \:\times&\!\!\!\!   \CC[x_1,x_0]_{3\max}
& \, \tto \, & V_\max = H^0(\FF_\max,\ko_{\FF_\max}(3\s_0)) \\
u_0 ,& u_1(x_1,x_0) ,& u_2(x_1,x_0) ,& u_3(x_1,x_0) & \mapsto & 
u_0 y^3 + u_1 y^2 + u_2 y + u_3
\end{matrix}
\]

which uses the homogeneous coordinates $x_1,x_0$ 
on the base $\PP^1$ and the inhomogeneous coordinate $y$
on the fibre. 
The left hand side is acted on by $\CC^*\times\CC^*\times GL_2$ 
where

\begin{enumerate}
\item
the first factor $\CC^*$ acts diagonally by homotheties,
\item
the second factor $\CC^*$ acts on the polynomials 
$u_{\nu}$ by $\lambda^\nu$,
\item 
$GL_2$ acts on the coordinate vector $(x_1,x_0)$.
\end{enumerate}
The set of sections with singular zero-locus is preserved under
this action.

\begin{prop}
\label{linsysquo}
Let $\dfami_{V_\max} \subset \PP V_\max$ be the discriminant corresponding
to singular divisors, then

\[
\pi_1^{orb} ([X'_{M}/\Gamma_g]) \quad = \quad 
\pi_1^{orb} \big( (\PP V_\max \setminus \dfami_{V_\max} ) \big/
\raisebox{-1mm}{$\Cstartimes GL_2$} \big)
\]

with respect to the action of $\{ 1 \} \times \CC^*\times GL_2$
induced on the linear system.
\end{prop}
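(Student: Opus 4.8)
The plan is to begin from the identifications established in Proposition~\ref{isop1quot}, which already give $\pi_1^{orb}([X'_{M,g}/\Gamma_g])\cong\pi_1^{orb}([F'_{M,g}/G_{M,g}])$, where $F'_{M,g}$ is the discriminant complement inside a single fibre over $\omega_0$ and $G_{M,g}$ is the full stabilizer of $\omega_0$ in $G$. By the identification recorded just above diagram~(\ref{pi-coset1}), this stabilizer is $G_{M,g}=\Aut(\ko(m)\oplus\ko(n))\times GL_2$, with splitting type $m=\max$, $n=2\max$ in the maximal case $3\max=g+2$. The point to record first is that the fibre $F_{M,g}$ is exactly the space of sections $V_\max=H^0(\FF_\max,\ko(3\s_0))$ of the statement, its discriminant being the cone of singular sections whose projectivization is $\dfami_{V_\max}$; so everything reduces to comparing the two group actions.

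Next I would do the group bookkeeping. The group $\Aut(\ko(m)\oplus\ko(n))$ is the semidirect product $T\ltimes U$ of its diagonal torus $T=\CC^\ast\times\CC^\ast$ with unipotent radical $U=\Hom(\ko(m),\ko(n))\cong H^0(\ko(\max))\cong\CC^{\max+1}$. Writing a section as $u_0y^3+u_1y^2+u_2y+u_3$ and letting $(a,d)\in T$ scale the two summands of $E$, a direct weight computation shows that $(a,d)$ scales the block $u_k$ by $a^{2-k}d^{k-1}$. The substitution $(\mu,\lambda)=(a^2d^{-1},\,da^{-1})$ has determinant one, hence is an isomorphism of $T$ onto the torus $\CC^\ast\times\CC^\ast$ of the statement: the central scalars $\{a=d\}$ act by the homothety $s\mapsto\mu s$ (the first factor), while the complementary $\CC^\ast$ acts by $u_\nu\mapsto\lambda^\nu u_\nu$ (the second factor). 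Thus $G_{M,g}=(T\ltimes U)\times GL_2$ has Levi subgroup $L=T\times GL_2=\CC^\ast\times\CC^\ast\times GL_2$, realizing precisely the action $(1),(2),(3)$.

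The single genuinely delicate step, and the place I expect the only real obstacle, is discarding the unipotent radical $U$. Since $U$ acts nontrivially on $V_\max$ (it translates sections in the fibre direction) one cannot simply pass to a quotient group; the argument must instead be about the homotopy type of the acting group. As $G_{M,g}$ is connected, the description of the orbifold fundamental group in Section~\ref{sect-orbifund} gives $\pi_1^{orb}([F'_{M,g}/G_{M,g}])=\pi_1(F'_{M,g})/\,\mathrm{im}\,\pi_1(G_{M,g})$, the homomorphism being induced by the orbit map. Because $U$ is contractible, the inclusion $L\hookrightarrow G_{M,g}$ is a homotopy equivalence, so $\pi_1(L)\to\pi_1(G_{M,g})$ is an isomorphism and the orbit maps of $L$ and of $G_{M,g}$ have the same image in $\pi_1(F'_{M,g})$; as $L$ is connected too, this gives $\pi_1^{orb}([F'_{M,g}/G_{M,g}])=\pi_1^{orb}([F'_{M,g}/L])$.

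Finally I would divide out the homothety. The factor $\CC^\ast=\{a=d\}$ of $L$ is central and acts freely on $F'_{M,g}$ (the complement in $V_\max$ of the cone of singular sections), so the projection onto $\PP V_\max\setminus\dfami_{V_\max}$ is a principal $\CC^\ast$-bundle and the homotopy quotient may be taken in stages. The residual group is $\CC^\ast\times GL_2$, acting as $\{1\}\times\CC^\ast\times GL_2$ on the linear system, whence $\pi_1^{orb}([F'_{M,g}/L])=\pi_1^{orb}\big((\PP V_\max\setminus\dfami_{V_\max})/(\CC^\ast\times GL_2)\big)$, which is the asserted identity.
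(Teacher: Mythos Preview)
Your proof is correct and follows essentially the same strategy as the paper: reduce to a single fibre via Proposition~\ref{isop1quot}, identify the torus action with the $\CC^*\times\CC^*\times GL_2$ action on $V_\max$ (your weight computation matches the paper's fact~(iii) exactly), and pass to the projectivization by dividing out the free homothety $\CC^*$. The only organizational difference is that you work with the $G$-stabilizer $G_{M,g}$ and discard its contractible unipotent radical $U=\Hom(\ko(\max),\ko(2\max))$ directly via the homotopy equivalence $L\hookrightarrow G_{M,g}$, whereas the paper passes first to $\Gamma_\max$, projectivizes both sides, and then invokes the five-lemma on diagram~(\ref{loops}); your route is a little more transparent about where the unipotent part goes, but the substance is the same.
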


\proof
From Prop.\ref{isop1quot} the orbifold fundamental group on
the left hand side is isomorphic to
\[
\pi_1^{orb} ([F'_\max/\Gamma_\max]) .
\]
On the other hand $F_\max$ is identified with $H^0(\FF_\max, \ofami_{\FF_\max}(3\s_0))=V_\max$. Of course, also the discriminants are identified so we infer $\PP V'_\max = \PP F_\max'$. To take the group actions into account we notice the following three facts,

\begin{enumerate}
\item
the action of $\Gamma_\max$ is induced by the action of $G_\max$ on
$\FF_M$, which in turn is induced by the action of

\medskip

\begin{equation*}
\Aut \ofami(\max) \times \Aut\ofami (2\max) \rtimes 
\Hom(\ofami(\max),\ofami(2\max))\times GL_2
\end{equation*}

on the corresponding rank $2$ bundle
$\ofami(\max) \oplus\ofami (2\max)$ over $\PP^1$.

\item
under the identification $\PP V'_\max = \PP F_\max'$ the action
of $\CC^*\times\CC^*\times GL_2$ factors through the isomorphism
\begin{equation}
\label{autiso}
\CC^*\times\CC^*\times GL_2
\quad \cong \quad
\Aut \ofami(\max) \times \Aut\ofami (2\max) \times GL_2.
\end{equation}

\item
Under the identification $V_\max= H^0 \ofami
+ H^0 \ofami(\max) + H^0 \ofami(2\max) + H^0 \ofami(3\max)$
the action induces the map

\[
\begin{array}{c@{}c@{}ccc@{}c@{}c@{}c@{}c@{}c@{}c}
\Aut \ofami(\max) &\times &\Aut\ofami (2\max)
& \tto  &
\Aut \ofami &\times &\Aut\ofami (\max) &\times &\Aut\ofami (2\max) 
&\times &\Aut\ofami (3\max)
\\
\lambda, &&\eta & \mapsto & 
\lambda^2\eta\inv\negthickspace\negthickspace,
&& \lambda, &&\eta, &&\lambda\inv \eta^2
\end{array}
\]

\end{enumerate}

We can therefore argue with the following commutative diagram 
\begin{equation}\label{loops}
\xymatrix{
\pi_1 (\Cstartimes\Cstartimes GL_2) \ar[r] \ar[r]\ar^{\sim}[d] & 
\pi_1 (\PP V_\max') \ar@{=}[d] \ar[r] & 
\pi_1^{orb} ([\PP V_\max'/\Cstartimes GL_2]) \ar[r]\ar^{\sim}[d] & 1
\\
\pi_1 (\Gamma_\max)  \ar[r] & 
\pi_1 (\PP F_\max')  \ar[r] & 
\pi_1^{orb} ([F'_\max/\Gamma_\max])  \ar[r] & 1 
}
\end{equation}

First we note that the top row is exact, even though we divide out
by a subgroup of the group on the left hand side. But this does not
matter since the additional factor acts transitively on the fibres
of $V_\max' \to \PP V_\max'$.

Similarly the bottom row is exact since the quotient map
$F_\max' \to \PP F_\max'$ is obtained by a free action of a subgroup
of $\Gamma_\max$. The first two vertical maps then follow from 
equation (\ref{autiso}) and the identity
$\PP V'_\max = \PP F_\max'$. 
With the five-lemma we get the final isomorphism to complete
our proof.
\qed  

Very much in the spirit of \cite{loenne-duke} a presentation of 
the knot group of $\dfami_{V_\max}\subset \PP V_\max$
has been obtained: 

\begin{thm}[\cite{loenne-weierstrass}, thm.2]
\label{theorem2}
Let $\PP V_\max'$ be the discriminant complement in the linear 
system $|3 \sigma_0|$ of trigonal curves of genus $g$ on the ruled surface 
$\PP(\ofami(\max)\oplus\ofami(2\max))$ with $3\max=g+2$.
Then $\pi_1(\PP V_\max')$ is generated by elements

$$ 
T_1, \dots , T_{2g+2}
$$

with a complete set of relations provided in terms
of the edges $E_\max$ of the graph below:
\\

\unitlength=1.6mm
\begin{figure}[h]
\begin{picture}(30,15)(-30,-3)

\put(0,0){\circle*{.3}}
\put(0,10){\circle*{.3}}
\put(10,0){\circle*{.3}}
\put(10,10){\circle*{.3}}
\put(20,0){\circle*{.3}}
\put(20,10){\circle*{.3}}
\put(30,0){\circle*{.3}}
\put(30,10){\circle*{.3}}
\put(40,0){\circle*{.3}}
\put(40,10){\circle*{.3}}

\put(1.5,1.5){\line(1,1){7}}
\put(11.5,1.5){\line(1,1){7}}
\put(31.5,1.5){\line(1,1){7}}

\put(1.5,0){\line(1,0){7}}
\put(1.5,10){\line(1,0){7}}
\put(11.5,0){\line(1,0){7}}
\put(11.5,10){\line(1,0){7}}
\put(31.5,0){\line(1,0){7}}
\put(31.5,10){\line(1,0){7}}

\put(0,1.5){\line(0,1){7}}
\put(10,1.5){\line(0,1){7}}
\put(20,1.5){\line(0,1){7}}
\put(30,1.5){\line(0,1){7}}
\put(40,1.5){\line(0,1){7}}

\put(22,5){\circle*{.5}}
\put(25,5){\circle*{.5}}
\put(28,5){\circle*{.5}}

\put(-1,-3){$1$}
\put(9,-3){$2$}
\put(19,-3){$\cdots$}
\put(29,-3){$\cdots$}
\put(39,-3){$g+1$}

\put(-5,12){$g+2$}
\put(7,12){$g+3$}
\put(17,12){$\cdots$}
\put(29,12){$\cdots$}
\put(39,12){$2g+2$}

\end{picture}
\caption{}
\label{graph}
\end{figure}

\begin{enumerate}
\item
$ T_\ibold T_\jbold= T_\jbold T_\ibold$
for all $(\ibold,\jbold)\not\in E_{\max}$,
\item
$ T_\ibold T_\jbold T_\ibold= T_\jbold T_\ibold T_\jbold$
for all $(\ibold,\jbold)\in E_{\max}$,
\item
$ T_\ibold T_\jbold T_\kbold T_\ibold= T_\jbold T_\kbold T_\ibold T_\jbold$
for $\ibold<\jbold<\kbold$
such that
$(\ibold,\jbold),(\ibold,\kbold),(\jbold,\kbold)\in E_{\max}$,
\item
for all $\jbold$
\\[-2mm]
\[
\bigg( T_\jbold\inv \Big( T_{2g+2} T_{2g+1} \cdots T_2 T_1
\Big)\bigg)^{g+1}
\,=\quad 
\bigg(\Big( T_{2g+2} T_{2g+1} \cdots T_2 T_1
\Big) T_\jbold\inv \bigg)^{g+1}
\]
\end{enumerate}
\end{thm}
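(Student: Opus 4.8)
The plan is to reconstruct the braid-monodromy computation of \cite{loenne-weierstrass}, which follows the general method developed in \cite{loenne-duke}. Recall that $\dfami_{V_\max}\subset\PP V_\max$ is the discriminant of the family of trigonal curves in $|3\s_0|$ on $\FF_\max$, so $\PP V_\max'$ is the parameter space of the \emph{smooth} members. First I would cut down the dimension exactly as in the reduction preceding Theorem~\ref{isomaroni}: by Zariski's theorem on hyperplane sections \cite{zariski-poincare}, $\pi_1(\PP V_\max')$ is isomorphic to $\pi_1(\PP^2\setminus\fkd)$ for a generic plane section $\fkd$ of $\dfami_{V_\max}$, turning the problem into the computation of the fundamental group of a plane-curve complement. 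The needed control on $\fkd$ — that it is a Pl\"ucker curve whose only singularities are nodes and cusps, with numerical invariants depending only on $g$ — is supplied by Proposition~\ref{plucker} and Corollary~\ref{numerical2}, so in a generic pencil of lines the special fibres (lines tangent to $\fkd$ or through its singular points) are completely understood.

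The essential step is the choice of a distinguished geometric basis and the explicit braid monodromy of such a pencil. Here I would use the triple-cover structure of the members of $|3\s_0|$: a smooth member is a simple $3$-sheeted cover of $\PP^1$ branched at $2g+4$ points with $S_3$-monodromy given by transpositions. Degenerating a generic member of the pencil, the vanishing cycles organise into two interleaved families, guided by the two classes of transpositions, each family a chain of $g+1$ cycles; this produces the $2g+2$ distinguished vanishing cycles that furnish the generators $T_1,\dots,T_{2g+2}$. Two of these cycles meet transversally in a single point precisely when the corresponding vertices are joined by an edge of the ladder graph of Figure~\ref{graph}, and are disjoint otherwise.

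Granting this, the relations are read off in the usual fashion of the Zariski--van Kampen method. Meridians of disjoint vanishing cycles commute, yielding the relations~(i); meridians of cycles meeting once satisfy a braid relation, yielding~(ii); the triple overlaps of three pairwise-adjacent cycles, which are responsible for the cusps of $\fkd$, produce the triangle relation~(iii). Relation~(iv) of ``global type'' is the manifestation of the monodromy at infinity of the pencil: the ordered product of the local braid-monodromy factors equals the central monodromy around the line at infinity, and expressing this identity in the chosen basis gives the $(g+1)$-st power relation, which is the characteristic shape of a chain relation. Assembling these via Zariski--van Kampen presents $\pi_1(\PP V_\max')$ on the $T_i$ with relations (i)--(iv).

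The hard part is the braid-monodromy computation together with the \emph{completeness} of the relation list. Concretely one must (a) exhibit a sufficiently good degeneration of the trigonal pencil in which $\dfami_{V_\max}$ acquires only the expected codimension-one strata transverse to the pencil, so that the local monodromies really are the elementary half-twists claimed; and (b) verify, through the bookkeeping of Hurwitz moves on the resulting braids, that the $2g+2$ chosen cycles already generate $\pi_1$ and that every monodromy relation is a consequence of (i)--(iv), with no surviving extra global relation. This topological-combinatorial core — abstract in \cite{loenne-duke} and specialised to the Weierstrass/trigonal setting in \cite{loenne-weierstrass} — is where the real work lies; by comparison the two reduction steps above are formal.
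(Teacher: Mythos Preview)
The paper does not actually prove this theorem: it is quoted verbatim from \cite{loenne-weierstrass} (with only a relabelling of indices, as the Remark following the statement explains). So there is no ``paper's own proof'' to compare against; the relevant comparison is with the argument in \cite{loenne-weierstrass}, which is indeed of Zariski--van Kampen type and built on the machinery of \cite{loenne-duke}.

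That said, your sketch contains a genuine gap at the very first reduction step. You invoke Proposition~\ref{plucker} and Corollary~\ref{numerical2} to control the generic plane section $\fkd$ of $\dfami_{V_\max}$. But Proposition~\ref{plucker} requires $c(g,M)\geq 3$, whereas in the maximal Maroni case $3M=g+2$ one has $c(g,M)=\tfrac12(g+2-3M)=0$. The paper itself flags this explicitly just before Proposition~\ref{plucker}: ``Alas so much can certainly not be assumed in case $M=(g+2)/3$ since the linear system is not very ample in this case.'' In particular $\FF_M$ is not embedded by $|3\s_0|$ (the section $\s_\infty$ is contracted), so the branch-curve/dual-variety argument that underlies Proposition~\ref{plucker} and Corollary~\ref{numerical2} is simply unavailable here, and you have no right to conclude that $\fkd$ is a Pl\"ucker curve with only nodes and cusps.

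This is not a cosmetic problem: the whole point of isolating the case $3M=g+2$ in Section~\ref{sect-presentation} is precisely that the generic-plane-section machinery of Section~3 breaks down. The actual computation in \cite{loenne-weierstrass} does not proceed via a generic plane section of an embedded projective dual, but rather by a direct analysis of a carefully chosen pencil through the Brieskorn--Pham point $y^3+x^{g+2}$ (cf.\ Proposition~\ref{freehomotopic}), exploiting the singularity-theoretic description of vanishing cycles for that quasi-homogeneous singularity. Your paragraphs on vanishing cycles, the ladder graph, and the global relation~(iv) are in the right spirit, but they need to be grounded in that degeneration rather than in Proposition~\ref{plucker}.
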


\paragraph{Remark}
The claim is changed from the given source in so far as we restrict
to the case $n=1$ and give the indices in the usual linear order,
$1$ replaces $(1,1)$, $2$ replaces $(1,2)$ and so on till $2g+2$ replaces $(2,g+1)$.
\\

We need to take a little bit more from that paper:

\begin{prop}
\label{freehomotopic}
Let $\delta_0$ and $\delta_1$ be the homotopy classes
given by
\begin{equation}
\label{products}
\delta_0 = T_{2g+2} T_{2g+1} \cdots T_2 T_1
,\qquad
\delta_1 = T_{g+2} T_1\: T_{g+3} T_2
\:\cdots\: T_{2g+3} T_{g+1},
\end{equation}
then the following paths represent their \emph{free} homotopy
classes:
\[
y^3 + x^{g+2} + e^{2\pi i t}, \qquad
y^3 + e^{2\pi i t} x^{g+2} + 1.
\]
\end{prop}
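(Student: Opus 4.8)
The plan is to recognise each of the two loops as an orbit of a one-parameter subgroup of the torus $\CC^*\times\CC^*\times GL_2$ acting on $V_\max$, and then to read off its free homotopy class from the monodromy computations underlying Theorem \ref{theorem2}. Since free homotopy classes of loops in the connected space $\PP V_\max'$ correspond bijectively to conjugacy classes in $\pi_1(\PP V_\max')$, it is enough to determine the conjugacy class of each of $\delta_0$ and $\delta_1$; in particular no base-point paths need to be tracked.

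First I would treat $\delta_0$. The point $[\,y^3+x^{g+2}+1\,]$ lies in $\PP V_\max'$, and $y^3+x^{g+2}$ is the Brieskorn--Pham polynomial whose Milnor number $(g+1)(3-1)=2g+2$ matches the number of generators $T_1,\dots,T_{2g+2}$. With the quasi-homogeneous weights $w_y=1/3$, $w_x=1/(g+2)$ the scaling $(x,y)\mapsto(s^{w_x}x,\,s^{w_y}y)$ multiplies both $y^3$ and $x^{g+2}$ by $s$, hence sends $[\,y^3+x^{g+2}+1\,]$ to $[\,y^3+x^{g+2}+s^{-1}\,]$; this one-parameter subgroup sits inside $\CC^*\times\CC^*\times GL_2$, and its orbit through the base point is exactly the circle $y^3+x^{g+2}+e^{2\pi i t}$. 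By the construction of the distinguished generators in \cite{loenne-duke,loenne-weierstrass} the monodromy of this quasi-homogeneous $\CC^*$-action is freely homotopic to the product of all of them, that is the Coxeter-type element $T_{2g+2}\cdots T_1=\delta_0$ of the graph of Figure \ref{graph}. This settles the first family.

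For $\delta_1$ I would run the same argument with the base scaling $x\mapsto s\,x$ given by the diagonal $\CC^*\subset GL_2$: it fixes $[\,y^3+x^{g+2}+1\,]$ except for replacing $x^{g+2}$ by $s^{g+2}x^{g+2}$, so the orbit through the base point is the circle $y^3+e^{2\pi i t}x^{g+2}+1$. It then remains to identify the braid monodromy of this rotation of the base $\PP^1$ with the interleaved product $\delta_1$, the point being that a full turn of the base permutes the distinguished vanishing data according to the bipartite (top row against bottom row) structure of the graph, producing exactly the alternating word recorded in \cite{loenne-weierstrass}.

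The main obstacle is this last identification. Whereas the statement for $\delta_0$ is essentially forced, because the product of all generators is the monodromy of the ambient quasi-homogeneous action and its conjugacy class is thereby determined, matching the base-rotation monodromy with $\delta_1$ requires tracking the rotation of the branch configuration against the precise system of paths that defines the $T_i$ in \cite{loenne-weierstrass}, and checking that the top- and bottom-row generators interleave in the stated order rather than in some conjugate rearrangement. This bookkeeping is the ``little bit more'' that we extract from that source.
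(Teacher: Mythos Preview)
The paper does not prove this proposition at all: it is introduced with ``We need to take a little bit more from that paper'' and is simply imported from \cite{loenne-weierstrass}. So there is no in-paper argument to compare against; what you have written is a sketch of how such a proof might go.

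Your idea for $\delta_0$ is the correct one in substance: the loop $y^3+x^{g+2}+e^{2\pi i t}$ is the classical one-parameter smoothing of the Brieskorn--Pham singularity, and its monodromy is the Coxeter-type product of all the distinguished vanishing twists. That part is standard and matches what \cite{loenne-weierstrass} records. However, your framing ``recognise each loop as an orbit of a one-parameter subgroup of $\CC^*\times\CC^*\times GL_2$'' is off-target here. The weighted scaling $(x,y)\mapsto(s^{1/(g+2)}x,\,s^{1/3}y)$ is not a closed loop in the torus (neither $s^{1/3}$ nor $s^{1/(g+2)}$ returns to $1$ when $s$ goes once around the unit circle); only its image in $\PP V_\max'$ closes up. More to the point, the torus-orbit viewpoint is what the paper invokes \emph{after} this proposition, in Lemma~\ref{homclasses}, where genuine circle subgroups produce the \emph{powers} $\delta_0^{2g+2}$, $\delta_1^{2g+2}$, $(\delta_0\delta_1)^3$ via Proposition~\ref{freehomotopic}. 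You have partly conflated the proposition with that later lemma.

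For $\delta_1$ you are right that the identification of the base-rotation loop with the interleaved word $T_{g+2}T_1\,T_{g+3}T_2\cdots$ is exactly the nontrivial input, and that it rests on the explicit distinguished system of paths constructed in \cite{loenne-weierstrass}. Your description of what needs to be checked (that the rotation permutes the top/bottom rows in the stated interleaving, not merely some conjugate of it) is accurate; this is precisely the content borrowed from that source and is not reproved in the present paper.
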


In fact we may deduce, with the commutation relations from
$i)$ above, that
\begin{equation}
\label{reorder}
\delta_0 = T_{2g+2} T_{2g+1} \cdots T_2 T_1
\quad = \quad
T_{2g+2} T_{g+1} \cdots 
T_{g+3} T_2\:T_{g+2} T_1
\end{equation}

To get a concise presentation of the given group, it is possible
to exploit the following isomorphism of free groups of free rank 
$2g+2=6\max-2$.
\[
\begin{array}{ccclcl}
t_i & = & \s_{j} & j= 6\max - ( i + 3 )/2 & \text{if} & i \equiv_6 1
\\
t_i & = & \s_{j} & j = 3\max - i/2  & \text{if} & i \equiv_6 2
\\
t_i & = & \s_{j} & j = 3\max-(i+1)/2 & \text{if} & i \equiv_6 3
\\
t_i & = & t_{i-1}\inv\s_j t_{i-1} & j= 6\max - ( i + 2 )/2 
& \text{if} & i \equiv_6 4
\\
t_i & = & t_{i+1}\s_j t_{i+1}\inv & j= 3\max - ( i + 1 )/2
& \text{if} & i \equiv_6 5
\\
t_i & = & \s_{j} & j = 6\max - (i+2)/2  & \text{if} & i \equiv_6 0
\end{array}
\]


Thanks to $t_{2i-1} t_{2i} = T_{2j}T_{j}$ for
$j = 3M - i$
the special elements are given in the new generators as
\begin{eqnarray*}
\delta_0 & = & t_1 t_2\: t_3 t_4\:\cdots\: t_{2g+1} t_{2g+2}
\\
\delta_1 & = & t_{2g+1} t_{2g+2}\: t_{2g-1} t_{2g} \: \cdots \: t_{1}t_{2}
\end{eqnarray*}

This isomorphism lies at the heart of the
following corollary.

\begin{cor}
\label{recast}
The fundamental group $\pi_1(\PP V_\max')$ has a finite presentation 
in terms of generators $t_1,\dots, t_{2g+2}$, and relations\\
$i)$ of "diagram type"
\begin{eqnarray*}
t_it_jt_i = t_jt_it_j & \text{ if }\ j=i+1,i+2;\\
t_it_j=t_jt_i & \text{otherwise}, 
\end{eqnarray*}
and
\begin{equation*}
(t_it_jt_i^{-1})t_k=t_k(t_it_jt_i^{-1})\ \text{when}\ i+1=j=k-1. \\
\end{equation*}
$ii)$ of "global type"
\begin{eqnarray*}
\label{asymp}
\delta_0 & \text{centralizes} & t_{2g+1} t_{2g-1} \cdots t_3 t_1, \\
&& t_{2g+2} t_{2g} \cdots t_4 t_2. \notag
\end{eqnarray*}
where we denote $\delta_0=t_1 \cdots t_{2g+2}$.
\end{cor}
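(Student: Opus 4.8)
The plan is to read Corollary \ref{recast} off Theorem \ref{theorem2} by a change of generators followed by a Tietze simplification of the transported relations. The displayed formulas express each $t_i$ as a word in the generators $\s_j=T_j$ of Theorem \ref{theorem2}, and the first step is to confirm that they define an isomorphism of free groups of rank $2g+2$. This is immediate from their triangular shape: for $i\equiv_6 1,2,3,0$ the word $t_i$ is a single generator $\s_j$ under a bijective relabelling of indices, whereas for $i\equiv_6 4,5$ it is a conjugate $t_{i-1}^{-1}\s_j t_{i-1}$ (respectively $t_{i+1}\s_j t_{i+1}^{-1}$) by a generator that has already been expressed. Hence both the substitution and its inverse are given by explicit words, and it suffices to rewrite the relations of Theorem \ref{theorem2} in the generators $t_i$.

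For the relations of diagram type, note that the relabelling $j\leftrightarrow i$ carries the graph of Figure \ref{graph} onto the graph of Figure \ref{graphfig}, whose edges are precisely the pairs $\{i,i+1\}$ and $\{i,i+2\}$. The commutation relations (i) and braid relations (ii) thus transport verbatim to the commutation and braid relations of diagram type. The triangle relations (iii), attached to the triangles $\{i,i+1,i+2\}$, transport to the asymmetric relations $(t_it_jt_i^{-1})t_k=t_k(t_it_jt_i^{-1})$ with $i+1=j=k-1$: substituting the conjugate expressions for the generators indexed $\equiv_6 4,5$ converts the length-four identity $T_iT_jT_kT_i=T_jT_kT_iT_j$ into this commutation. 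This is a finite substitution check, carried out residue class by residue class.

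The relations of global type are where the real work lies. Using the identity $(\delta_0 T_j^{-1})^{g+1}=\delta_0(T_j^{-1}\delta_0)^{g+1}\delta_0^{-1}$, each relation (iv) is equivalent to the assertion that $(T_j^{-1}\delta_0)^{g+1}$ commutes with $\delta_0$. The structural fact to be extracted from the diagram relations and the reordering (\ref{reorder}) is that conjugation by $\delta_0$ permutes the generators up to conjugacy while preserving the two rows of the graph, each row being a single orbit of length $g+1$. Granting this, a telescoping computation gives $(T_j^{-1}\delta_0)^{g+1}=Q_j^{-1}\delta_0^{g+1}$, where $Q_j$ is the ordered product over the $\delta_0$-orbit of $T_j$, namely the product of one full row; since $\delta_0^{g+1}$ commutes with $\delta_0$, relation (iv) for the index $j$ is then equivalent to the statement that $\delta_0$ centralizes $Q_j$. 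As $j$ ranges over all indices, $Q_j$ assumes only the two row products, which under the change of generators become $t_{2g+1}t_{2g-1}\cdots t_3t_1$ and $t_{2g+2}t_{2g}\cdots t_4t_2$. This produces exactly the two relations of global type and exhibits the remaining $(2g+2)$ relations (iv) as consequences.

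The step I expect to be the main obstacle is establishing this shift property of $\delta_0$ together with the precise bookkeeping of the orbit product $Q_j$. Identifying $(T_j^{-1}\delta_0)^{g+1}$ with a clean row product up to the factor $\delta_0^{g+1}$ requires controlling the boundary behaviour of the shift near the ends of the index range, where the conjugating corrections coming from the $\equiv_6 4,5$ generators accumulate; one must check that these corrections cancel, so that $Q_j$ is honestly the ordered product $t_{2g+1}t_{2g-1}\cdots t_3t_1$ (respectively the even one) and not merely a conjugate of it. Once this identification is secured, the diagram- and global-type relations are exactly the transported relations of Theorem \ref{theorem2}, and the corollary follows.
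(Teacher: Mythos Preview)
Your approach is exactly the one the paper takes: transport Theorem \ref{theorem2} through the explicit change of free generators displayed immediately before the corollary. The paper, however, gives no proof at all beyond the sentence ``This isomorphism lies at the heart of the following corollary'' and the subsequent \emph{Aside} conceding that extracting the relations of type~(iv) from the two global relations ``needs quite an effort''. So your write-up is already more detailed than what the paper supplies.

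Your handling of the diagram-type relations (graph isomorphism plus a residue-class substitution check for the triangle relations) matches the intended mechanism. For the global relations your reduction via the identity $(\delta_0 T_j^{-1})^{g+1}=\delta_0(T_j^{-1}\delta_0)^{g+1}\delta_0^{-1}$ and a shift property of $\delta_0$ on the two rows is a natural strategy, and you correctly isolate the genuine difficulty: the telescoping identity $(T_j^{-1}\delta_0)^{g+1}=Q_j^{-1}\delta_0^{g+1}$ needs a \emph{literal} shift $\delta_0 T_j\delta_0^{-1}=T_{\psi(j)}$, not merely one up to conjugacy, and the boundary terms coming from the conjugated generators with $i\equiv_6 4,5$ must be shown to cancel so that $Q_j$ is exactly the ordered row product. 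The paper does not indicate any alternative route, so there is nothing further to compare; your flagged obstacle is precisely the ``effort'' the paper alludes to.
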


\paragraph{Aside} It needs quite an effort to get all the relations
of part $iv)$ of the theorem essentially out of the two relations
of (\ref{asymp}).
\\

\subsection{The group action and free loops}\label{groupaction}

With Prop.\ref{linsysquo} in mind we want to understand the
left hand map of the exact sequence $(\ref{loops})$.
\[
\pi_1 (\Cstartimes\Cstartimes GL_2) \quad \tto \quad
\pi_1 (\PP V_\max') 
\]
Elements in the domain can be represented by loops
based at the identity which are real $1$-parameter subgroups. An image is then represented by the corresponding orbit of a chosen basepoint. 

We choose four $S^1$-subgroups of $\Cstartimes\Cstartimes GL_2$, each parametrized by complex numbers $\lambda$ of unit length:
\[
(\lambda, \lambda, \big(\begin{smallmatrix} 1&0\\0 & 1 \end{smallmatrix}\big) ),\quad
(\lambda, \lambda^2, \big(\begin{smallmatrix} 1&0\\0 & 1 \end{smallmatrix}\big) ),\quad
(1,1, \big(\begin{smallmatrix} \lambda&0\\0 & 1 \end{smallmatrix}\big) ),\quad
(1,1, \big(\begin{smallmatrix} 1&0\\0 & \lambda\end{smallmatrix}\big) ).
\]

As we noted before, the first subgroup acts trivially on the projective space $\PP V_\max$, since it coincides with the circle action on the coordinates $y$ and $y_0$.
\begin{eqnarray*}
y & \mapsto & \lambda y \\
y_0 & \mapsto & \lambda y_0
\end{eqnarray*}

The action of the second subgroup on coordinates is 
\begin{eqnarray*}
y_0& \mapsto & \lambda y_0 \\
x,x_0,y & \mapsto & x,x_0,y
\end{eqnarray*}
The remaining two act by multiplication by $\lambda$ solely on $x$, respectively $x_0$.

In $\PP V_\max$ let us take now the point represented by $y^3+x^{2g+2} + x_0^{2g+2}$. We neglect the first constant orbit, the other three orbits are represented by
\[
y^3+\lambda^3x^{2g+2} + \lambda^3x_0^{2g+2},\quad
y^3+\lambda^{2g+2}x^{2g+2} + x_0^{2g+2},\quad
y^3+x^{2g+2} + \lambda^{2g+2}x_0^{2g+2}.
\]

With these pieces of information we can obtain the image classes in $\pi_1(\PP V_\max')$ of the three essential subgroups.

\begin{lemma}
\label{homclasses}
The images of the three essential subgroups are respectively
\[
\delta_0^3\delta_1^3,\quad
\delta_0^{2g+2}\!, \quad \text{and}\quad
\delta_1^{2g+2}.
\]
\end{lemma}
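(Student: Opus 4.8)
The plan is to express each of the three subgroup images as the free homotopy class of a concrete loop in $\PP V_\max'$, namely the orbit of the base point $p_0 := [\,y^3 + x^{2g+2} + x_0^{2g+2}\,]$ under the given $S^1$-subgroup, and then to evaluate that class by comparison with the explicit representatives of $\delta_0$ and $\delta_1$ furnished by Prop \ref{freehomotopic}. Since $\pi_0$ of each acting group is trivial, the orbit data determines exactly a conjugacy (free homotopy) class in $\pi_1(\PP V_\max')$, which is precisely the information recorded by the three $\delta$-words.

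I would package all three computations into a single auxiliary map. Consider
\[
\Phi\colon\ (\CC^*)^2\ \tto\ \PP V_\max',\qquad
(c_1,c_0)\ \longmapsto\ [\,y^3 + c_1\, x^{2g+2} + c_0\, x_0^{2g+2}\,],
\]
which lands in the discriminant complement because the associated cyclic trigonal curve stays smooth precisely when its $y^0$-coefficient has distinct roots, i.e.\ when $c_1,c_0\neq0$; note $\Phi(1,1)=p_0$. The three orbits written above are the $\Phi$-images of the loops $\lambda\mapsto(\lambda^3,\lambda^3)$, $(\lambda^{2g+2},1)$ and $(1,\lambda^{2g+2})$, hence they represent the classes $(3,3)$, $(2g+2,0)$ and $(0,2g+2)$ in $\pi_1((\CC^*)^2)=\ZZ\times\ZZ$. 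On the two standard generators $(1,0)$ and $(0,1)$ — the single full rotations of the leading and of the trailing coefficient — Prop \ref{freehomotopic} identifies $\Phi_*$ with the homotopy classes $\delta_0$ and $\delta_1$ (matching the two single-coefficient rotations to $\delta_0,\delta_1$ as in that statement).

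Now $\Phi_*\colon \ZZ\times\ZZ\to\pi_1(\PP V_\max')$ is a homomorphism out of an abelian group, so its image is abelian; in particular $\delta_0$ and $\delta_1$ commute there, and the three distinguished loops are sent to $\delta_0^{3}\delta_1^{3}$, $\delta_0^{2g+2}$ and $\delta_1^{2g+2}$ respectively, which are exactly the asserted values. The only smoothness input required along the way is the distinctness of the roots of the $y^0$-coefficient, which holds over the entire unit torus, so each loop and each homotopy stays inside $\PP V_\max'$.

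I expect the crux to be the bookkeeping that turns the three chosen one-parameter subgroups into the stated coefficient rotations with the correct winding numbers ($3$ for the diagonal $\CC^*$-factor and $2g+2$ for each $GL_2$-factor). Here one must track not only the action on the base coordinates $x,x_0$ but also the induced action on the fibre coordinate $y$, and verify that after projectively normalising the coefficient of $y^3$ the net effect really is a rotation of the extreme coefficients alone. A second, purely formal point is the passage between based and free homotopy: the subgroup images are honest elements of $\pi_1(\PP V_\max',p_0)$, whereas Prop \ref{freehomotopic} pins down only conjugacy classes, so for the product $\delta_0^3\delta_1^3$ one should base all loops at $p_0$ (equivalently, choose compatible representatives), or — as already suffices for the quotient presentation that follows, where one passes to a normal closure — simply read the three answers as free homotopy classes.
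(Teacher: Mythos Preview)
Your approach is essentially the one taken in the paper. The auxiliary map $\Phi:(\CC^*)^2\to\PP V_\max'$ is precisely the family $y^3+ex^{2g+2}+e_0x_0^{2g+2}$ over $E'\cong\CC^*\times\CC^*$ that the paper introduces, and both arguments read off the three orbits as the classes $(3,3)$, $(2g+2,0)$, $(0,2g+2)$ in $\pi_1(E')\cong\ZZ\times\ZZ$ and then invoke Prop.~\ref{freehomotopic} together with the abelianness of the source to conclude.

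The one point where the paper goes further than you is the passage from free to based homotopy. Prop.~\ref{freehomotopic} only pins down the \emph{free} homotopy classes of the generating loops, so a priori $\Phi_*(1,0)$ and $\Phi_*(0,1)$ are merely conjugate to $\delta_1,\delta_0$; consequently $\Phi_*(2g+2,0)$ is conjugate to $\delta_1^{2g+2}$, and for the mixed class $\Phi_*(3,3)$ even the conjugacy class is not automatic from the separate conjugacy classes of the factors. The paper resolves this by asserting that $\delta_0^{2g+2}$, $\delta_1^{2g+2}$ and $\delta_0^3\delta_1^3$ are each \emph{central} in $\pi_1(\PP V_\max')$ (a fact one checks from the relations of Cor.~\ref{recast}), so that the conjugacy classes are singletons and the equalities hold on the nose. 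You flag the issue and observe, correctly, that for the subsequent use---adding these elements as relators, hence passing to normal closures---free homotopy classes would suffice; but note that this shortcut works cleanly only for the two pure powers, while for $\delta_0^3\delta_1^3$ one still needs something (e.g.\ centrality, or a more careful choice of path to the presentation's base point) to identify even the conjugacy class. The paper's centrality claim is the tidier fix.
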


\begin{proof}
By Prop.\ref{freehomotopic} the last orbit is freely homotopic
to $\delta_1^{2g+2}$, hence the claim is true up to conjugation.
It is in fact true, since $\delta_1^{2g+2}$ can be shown to be central in $\pi_1(\PP V_\max')$. The same argument applies
to the second orbit.

Now let us go back to $y^3+x^{2g+2}+x_0^{2g+2}$. We use it again as a base point but look at the family
\[
y^3+ex^{2g+2}+e_0x_0^{2g+2}.
\]
All members define smooth trigonal curves except for $e=0$ or $e_0=0$. Let us denote by $E=\{e,e_0\}$ the parameter space of this family. The discriminant is a normal crossing divisor in $\CC^2$, the union of the axes. This means that $E'\cong \CC^*\times \CC^*$ and $\pi_1(E')\cong \ZZ \times \ZZ$.  We can thus conclude that $\delta_0$ and $\delta_1$ commute.

Moreover the first orbit is now identified with $\delta_0^3 \delta_1^3$ up to conjugacy. Also in this last case the claim follows, since this element can be shown to be central, too.
\qed
\end{proof}

In this way the argument of the section is almost completed. 
It only remains to combine Cor.\ref{recast} and add the elements in the claim of Lemma \ref{homclasses} to get the claim of
Theorem \ref{presentgroup}. 
This procedure is justified by Prop.\ref{linsysquo}.
As a last improvement we note that the last two elements in
Lemma \ref{homclasses} coincide.

\subsection{The symplectic action on the first homology group}

Let us conclude with the implication to the monodromy map in the case $M=\frac{g+2}{3}$. Instead of the mapping class group of genus $g$ itself we consider the representation on the first homology group with $\ZZ/2\ZZ$ coefficients of the complex curve $C_g$ of genus $g$. We will denote by $\underline{c}$ the class in $H_1(C_g,\ZZ_2)$ of a closed (real) curve $c\subset C_g$. Let $t_i$ be the generators of the orbifold fundamental group as in Section \ref{compweier}. Since they map to Dehn twists along simple closed curves $c_i$ on $C_g$, the induced maps on the first homology are Picard-Lefchetz transvections

\begin{eqnarray*}
\tau_i: H_1(C_g,\ZZ_2) & \to & H_1(C_g,\ZZ_2);\\
\underline{c} & \mapsto & \underline{c} + \langle c, c_i \rangle \underline{c_i}
\end{eqnarray*}

where $\langle c, c_i \rangle$ is the parity of the transversal intersection of $c_i$ with a representative $c$ of the class $\underline{c}$.

\begin{lemma}\label{transvect}
The generators $t_1, \dots , t_{2g+2}$ map to transvections $\tau_1, \dots , \tau_{2g+2} \in Sp H_1 (\Sigma_g, \ZZ_2)$ on elements $c_1, \dots, c_{2g+2} \in H_1(\Sigma_g,\ZZ_2)$ on which the intersection pairing is given by the diagram below.
\end{lemma}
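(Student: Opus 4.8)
The plan is to prove the two halves of the statement separately: that each $t_i$ acts on $H_1(\Sigma_g,\ZZ_2)$ as a symplectic transvection, and that the transvection classes $c_1,\dots,c_{2g+2}$ realize exactly the incidences of Figure~\ref{graphfig}. The first half is essentially already in hand. By the discussion preceding the lemma, the monodromy $\mu_g^{\mathcal T}$ sends $t_i$ to the Dehn twist $T_{c_i}$ along a simple closed curve $c_i$, and the Picard--Lefschetz formula gives its action on first homology as $\underline c\mapsto \underline c+\langle c,c_i\rangle\,\underline{c_i}$. Since every mapping class preserves the algebraic intersection form on $H_1(\Sigma_g,\ZZ)$, reduction mod $2$ places this action in $Sp\,H_1(\Sigma_g,\ZZ_2)$ and realizes $\tau_i$ as the transvection along $\underline{c_i}$. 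Hence the whole content of the lemma is the determination of the mod $2$ intersection numbers $\langle c_i,c_j\rangle$.

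For this I would use that $\mu_g^{\mathcal T}$ is a homomorphism, so each relation among the $t_i$ in the presentation of Theorem~\ref{presentgroup} descends to an identity among the Dehn twists $T_{c_i}$ in $\map_g$ (this is not circular, as the presentation itself is obtained independently of the monodromy, via the comparison of Proposition~\ref{linsysquo} and Theorem~\ref{theorem2}). The geometric input is the classical dictionary between Dehn-twist relations and the configuration of their defining curves: for distinct essential simple closed curves $a,b$ one has $T_aT_b=T_bT_a$ exactly when $a$ and $b$ can be isotoped to be disjoint, and $T_aT_bT_a=T_bT_aT_b$ exactly when $a$ and $b$ meet transversally in a single point. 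Feeding the relations of ``diagram type'' into this dictionary fixes the configuration of every pair $c_i,c_j$; and since a single transverse intersection gives homological intersection $\pm1\equiv1$ while disjointness gives $0$, it fixes the mod $2$ pairing $\langle c_i,c_j\rangle$.

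Reading off the presentation, a braid relation is recorded precisely for the pairs with $j=i+1$ or $j=i+2$ and a commutation relation for every remaining pair, so $\langle c_i,c_j\rangle=1$ exactly when $|i-j|\in\{1,2\}$ and $\langle c_i,c_j\rangle=0$ otherwise. This is precisely the incidence given by the vertical, diagonal and horizontal edges of Figure~\ref{graphfig}, which proves the claim. The remaining ``diagram type'' relation $(t_it_jt_i^{-1})t_k=t_k(t_it_jt_i^{-1})$ for the triple $(i,i+1,i+2)$ imposes no further pairwise condition: it merely records that the curve $T_{c_i}(c_j)$ is disjoint from $c_k$, and is automatically compatible with the pattern just found. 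I expect the only delicate point to be ensuring the geometric dictionary applies, namely that the $c_i$ are pairwise distinct essential simple closed curves; this is guaranteed by their nature as vanishing cycles of the degeneration underlying the monodromy, with the braid relations already ruling out coincident or inessential cycles for neighbouring indices.
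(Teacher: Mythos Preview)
Your argument is correct and follows essentially the same logical skeleton as the paper's: both deduce the intersection pattern of the $c_i$ from the diagram-type relations, using that commutation forces $\langle c_i,c_j\rangle=0$ and the braid relation forces $\langle c_i,c_j\rangle=1$. The only difference is the level at which the implication is run. The paper works directly with the $\ZZ_2$-transvections $\tau_i$ in $Sp\,H_1$ and uses the elementary facts that, for linearly independent $c_i,c_j$, one has $\tau_i\tau_j=\tau_j\tau_i\Rightarrow\langle c_i,c_j\rangle=0$ and $\tau_i\tau_j\tau_i=\tau_j\tau_i\tau_j\Rightarrow\langle c_i,c_j\rangle=1$; the required linear independence is supplied by identifying the $c_i$ with a basis of vanishing cycles in the Milnor fibre of $y^3+x^{g+2}$. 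You instead lift the argument to $\map_g$ and invoke the Dehn-twist dictionary, which needs the $c_i$ to be pairwise non-isotopic essential simple closed curves rather than linearly independent homology classes. Both hypotheses come from the same source (the $c_i$ are the vanishing cycles of the Brieskorn--Pham degeneration), so the two routes are equivalent; the paper's is marginally more self-contained because the transvection implications are a two-line computation, whereas the full converse for Dehn twists (braid relation $\Rightarrow i(c_i,c_j)=1$) is a nontrivial theorem you are quoting.
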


\unitlength=1.9mm
\begin{picture}(15,18)
\put(2,3){\includegraphics[scale=.25]{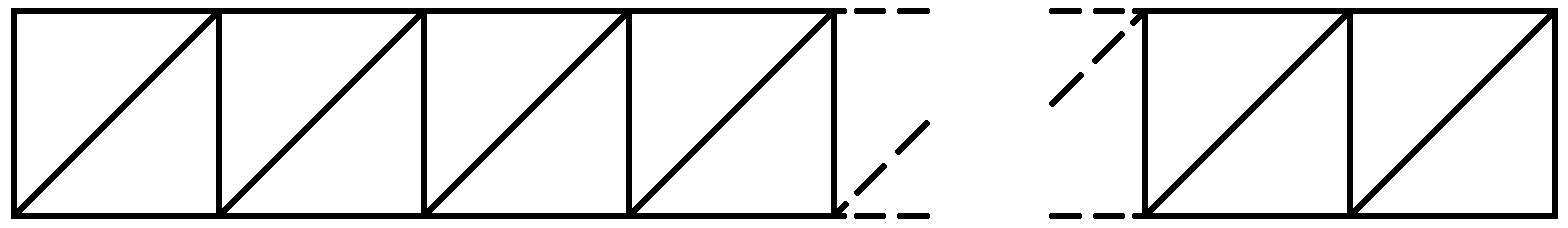}}
\end{picture}

\newcommand{\Fm}{F_{Mil}}

\proof
If we consider $y^3+x^{g+2}$ as a singular curve in the trigonal linear system, then the Milnor fibre of its isolated singularity is naturally identified with the intersection of a small ball with any sufficiently close smooth curve $C_g$ in our trigonal linear system.
In fact, the elements $c_i$ are linearly independent in the $\ZZ/2\ZZ$ first homology of the Milnor fiber $\Fm$ of the function $y^3+x^{g+2}$.
It is then readily checked that for $i\neq j$
\begin{enumerate}
\item
$\tau_i \tau_j = \tau_j\tau_i$ implies $\langle c_i,c_j\rangle =0$,
\item
$\tau_i \tau_j\tau_i = \tau_j\tau_i\tau_j$ implies $\langle c_i,c_j\rangle =1$.
\end{enumerate}
Hence the intersection diagram of the $c_i$ coincides with the diagram of figure \ref{graphfig} encoding the commutation and braid relations of pairs of $t_i$.
\qed

By our choice of the generators $t_i$, the diagram
gives generators $c_i$ of the $\ZZ/2\ZZ$ first homology of the Milnor fiber $\Fm$ of the function $y^3+x^{g+2}$, as computed by Pham \cite{ph} and Hefez-Lazzeri \cite{hl}.

The isolated singularity given by  $y^3+x^{g+2}$ is of type $J_{M,0}$ according to the second table of \cite[p.248]{avgs}, where again $M = \frac{g+2}3$. According to \cite[Tabelle 3, p.484]{eb-quad-form} its integral intersection lattice is isomorphic to
\begin{eqnarray*}
& \displaystyle \bigoplus^{l} E_8 \oplus \bigoplus^{2l} 
\big(\begin{smallmatrix} 0 & 1 \\ 1 & 0
\end{smallmatrix} \big) \oplus D_4
&\text{if } M=2l+1 \text{ is odd},
\\
\text{respectively}
& \displaystyle \bigoplus^{l} E_8 \oplus \bigoplus^{2l-2} 
\big(\begin{smallmatrix} 0 & 1 \\ 1 & 0
\end{smallmatrix} \big) \oplus 0 \oplus 0
&\text{if } M=2l \text{ is even}.
\end{eqnarray*}

After reduction$\pmod 2$ the corresponding $\ZZ_2$-vector space of dimension $2g+2$
is seen to have a radical of rank $2$, coming from the 
reduction of the summand $D_4$ in case $M$ is odd.
This radical is generated by the elements
\begin{eqnarray*}
&&
c_1+ c_4 \: \:+ c_7 + c_{10}  \: \:+ \cdots + c_{2g-1} + c_{2g+2},\\
&&
c_1+c_2+c_3\: \:+ c_7+c_8+c_9 \: \:+\cdots 
+ c_{2g-1}+ c_{2g}+c_{2g+1}.
\end{eqnarray*}
The support of the corresponding elements can be given on the
Dynkin-diagram as

\unitlength=1.9mm
\begin{picture}(15,20)
\put(10,12){\includegraphics[scale=.16]{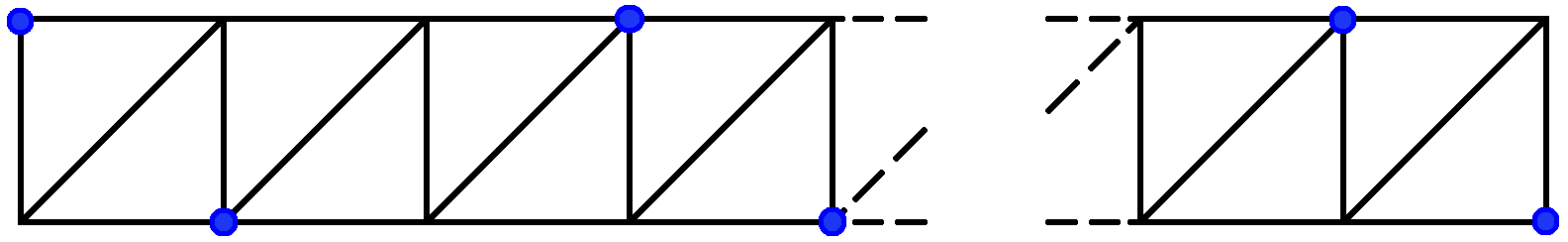}}
\put(10,2){\includegraphics[scale=.16]{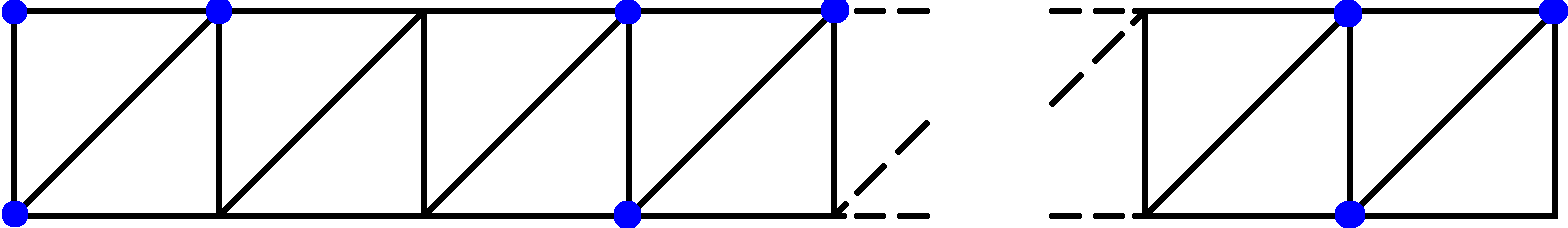}}
\end{picture}

We infer that the embedding of $\Fm$ into $C_g$ induces
a surjection on $H_1$ with $\ZZ_2$ coefficients and kernel
generated by the two elements above.

The quadratic form on the Milnor lattice is given by $1$ on the
generators. Its value on the elements of the radical can be
read off the diagrams, since this value is the $\mod 2$ Euler
number of the corresponding full subgraph.
In the first diagram above the subgraph
consists of an even number of isolated
vertices, in the second it consists of $d$ cycles.
Thus the quadratic form takes value $0$ on the radical.
We conclude that the monodromy group acts
on first homology of the trigonal fibre preserving the induced
quadratic form.

\paragraph{Remark}
The last claim can be deduced from \cite[thm.\ 11.1]{bh}.
Consider the basis of the first homology consisting of the
$c_1,\dots,c_{2g}$ (except for $c_{2g+1},c_{2g+2}$). 
Using the two elements
of the radical above, we can express $c_{2g+1}$,
$c_{2g+2}$ in this basis as
\begin{eqnarray*}
&&
c_1+c_2+c_3\: \:+ c_7+c_8+c_9 \: \:+\cdots 
+ c_{2g-1}+ c_{2g},\\
&&
c_1+ c_4 \: \:+ c_7 + c_{10}  \: \:+ \cdots + c_{2g-1}.
\end{eqnarray*}
The quadratic form defined on the basis elements to be $1$
has value $1$ also on these two elements, hence $(c)$ 
of  \cite[thm.\ 11.1]{bh} is
not given and the transvections on the monodromy
does not generate the full group of symplectic transformations.
Using \cite{j1,j2} it is then immediate that monodromy
acts by the full group of transformations respecting the 
quadratic form.

\section*{Appendix A: The construction of the monodromy map}

We include here a preliminary discussion of the monodromy map as this notion needs to be defined carefully to get our statements right.
\\

The families of trigonal curves are locally trivial in the complex topology.
Hence in our topological analysis we want to associate some
topological datum to it.
\\

Suppose $p:E\to B$ is a $G$-bundle with respect to an action of the group
$G$ on the fibre $F$. So $B$ is covered by open trivialization
patches $U$ with chart diagrams

\[\xymatrix{  F \times U \ar[r]^{\phi_U}_{\sim} \ar[d]_{pr_2} & E_{|U} \ar[d] \\
U \ar@{=}[r] & U } \]


such that a change of trivialization is given by a diagram

\[\xymatrix{ F \times U \ar[r]^{\phi^{-1}_V\phi_U} \ar[d] & F\times V \ar[d] \\
U & V } \]


where the map on top is given as

\[
(e,u) \mapsto (g_{U\cap V}(u)\cdot e, u)
\]
for $u\in U\cap V$ and some continuous $g:U\cap V \to G$. 
\\

Let $I$ be the unit interval. Since $I$ is contractible, a map $\gamma: I\to B$ can be lifted
to a morphism $\tilde\gamma: F\times I \to E$ of $G$-bundles. Let $V\subset I$ and $U\subset B$ two open sets such that $\gamma(V)\subset U$. Then
 $\tilde\gamma$ is trivialized locally on the base as follows

\[\xymatrix{ F \times V \ar[d] \ar[r]^{\tilde{\gamma}_{|F\times I}} & F \times U \ar[d] \\
V \ar[r]^{\gamma_{|V}} & U }\]

where the map on top is given by 

\[
(e,v) \mapsto ( g_{\phi_V,\phi_U}(u)\cdot e, \gamma(v))
\]

for some continuous $g:V \to G$.
\\

\begin{lemma}\label{nondep}
Given a closed path $\gamma$, i.e.\ $\gamma(0)=\gamma(1)$
the element
\[
g_{\phi_V,\phi_U}(0) g_{\phi_W,\phi_U}(1)\inv
\]
does not depend on the choice of a trivialization $\phi_U$ of 
$E$ at $\gamma(0)$.
\end{lemma}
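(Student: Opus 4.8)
The plan is to isolate the single quantity whose invariance is asserted and to track how it responds to a change of the chart $\phi_U$. Everything else is held fixed: the lift $\tilde\gamma$ of the path, the patches $V\ni 0$ and $W\ni 1$ in $I$, and the condition $\gamma(V),\gamma(W)\subset U$; only the trivialization $\phi_U$ of $E$ over the patch $U$ containing the common point $b_0:=\gamma(0)=\gamma(1)$ is allowed to vary. So I would take a second trivialization $\phi_U'$ of $E$ over the same $U$ and compare the two. Any two such trivializations differ by a gauge transformation, i.e.\ there is a continuous map $h\colon U\to G$ relating $\phi_U'$ and $\phi_U$ through the change-of-chart diagram recalled above, with $h$ acting fibrewise on the $F$-factor.

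The first key step is to derive a transformation rule for the comparison function. Substituting the relation between $\phi_U'$ and $\phi_U$ into the defining identity of $g_{\phi_V,\phi_U}$ (the identity expressing $\phi_U^{-1}\circ\tilde\gamma$ on $F\times V$ as $(e,v)\mapsto(g_{\phi_V,\phi_U}(v)\cdot e,\gamma(v))$), and using the chart conventions fixed above, one reads off $g_{\phi_V,\phi_U'}(v)=g_{\phi_V,\phi_U}(v)\,h(\gamma(v))$ for $v\in V$, and likewise $g_{\phi_W,\phi_U'}(v)=g_{\phi_W,\phi_U}(v)\,h(\gamma(v))$ for $v\in W$. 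In words: replacing $\phi_U$ by $\phi_U'$ multiplies the local comparison function by the transition value $h$ evaluated along the path, on the side dictated by the conventions. This is the only computation in the argument, and it is routine once the diagrams are unwound.

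The decisive step is then to evaluate at the two endpoints and to exploit that the path is closed. Because $\gamma(0)=\gamma(1)=b_0$ lies in the single patch $U$, the same $\phi_U$, and hence the same $h$, governs both ends, so both endpoint values of the comparison function acquire the identical group element $h(\gamma(0))=h(b_0)=h(\gamma(1))$. Forming the combination of the lemma, $g_{\phi_V,\phi_U'}(0)\,g_{\phi_W,\phi_U'}(1)^{-1}=g_{\phi_V,\phi_U}(0)\,h(b_0)\,h(b_0)^{-1}\,g_{\phi_W,\phi_U}(1)^{-1}=g_{\phi_V,\phi_U}(0)\,g_{\phi_W,\phi_U}(1)^{-1}$, so the extra factors cancel and the element is unchanged.

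I expect the main obstacle to be bookkeeping rather than anything conceptual: one must get the sidedness of the transition factor exactly right, since the analogous combination taken in the opposite order would transform by conjugation by $h(b_0)$ instead of being invariant. It is precisely the closedness of the path, forcing the same transition value at $0$ and at $1$, together with the correct order of the two factors in $g_{\phi_V,\phi_U}(0)\,g_{\phi_W,\phi_U}(1)^{-1}$, that converts a would-be conjugation into an outright cancellation; making this bookkeeping match the conventions of the defining diagrams is the heart of the lemma.
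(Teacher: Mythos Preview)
Your argument is correct and is precisely the paper's approach, only spelled out in more detail. The paper's entire proof is the single sentence ``A change of chart only introduces an element of $G$ and its inverse between the two factors,'' which is exactly the cancellation $h(b_0)\,h(b_0)^{-1}$ you exhibit after deriving the transformation rule for $g_{\phi_V,\phi_U}$ and using $\gamma(0)=\gamma(1)$.
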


\proof
A change of chart only introduces an element of $G$ and its
inverse between the two factors.
\qed

The natural consequence of Lemma \ref{nondep} that we care about is the following Proposition.

\begin{prop}
The following map given on representatives is well-defined
\[
\begin{matrix}
\pi_1(X, x_0 ) & \tto & \pi_0 ( G, id) \\
[\gamma] & \mapsto & 
[g_{\phi_V,\phi_U}(0) g_{\phi_W,\phi_U}(1)\inv]
\end{matrix}
\]
and is called the $G$-monodromy of the $G$-bundle $p:E\to X$.
\end{prop}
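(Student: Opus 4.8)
The plan is to read ``well-defined'' as two separate claims and to dispatch them in turn: first, that for a \emph{fixed} closed path $\gamma$ the prescribed element $g_{\phi_V,\phi_U}(0)\,g_{\phi_W,\phi_U}(1)\inv$ determines a class in $\pi_0(G)$ that is independent of every auxiliary choice entering its construction; and second, that this class depends only on the based homotopy class $[\gamma]$, so that the assignment descends from paths to $\pi_1(X,x_0)$.

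For the first claim I would observe that Lemma \ref{nondep} already settles the choice of the base trivialization $\phi_U$ at $\gamma(0)$, and in fact it does so at the level of $G$ itself, not merely in $\pi_0(G)$. The only remaining freedom is in the lift $\tilde\gamma\colon F\times I\to E$ and in the local endpoint charts $\phi_V,\phi_W$. Since $I$ is contractible, any two lifts of $\gamma$ differ by a gauge transformation, that is, a continuous map $h\colon I\to G$, and this modifies the endpoint comparison data by $h(0)$ and $h(1)$. Because $h$ is a path in $G$, the combination $h(0)h(1)\inv$ lies in the identity component of $G$, so the net effect on the product is multiplication by an element that is trivial in $\pi_0(G)$; hence the class is unchanged. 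A change of the local charts $\phi_V,\phi_W$ is handled by the same computation that proves Lemma \ref{nondep}, again introducing only a continuously varying element of $G$.

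The substance of the argument is homotopy invariance. Given two based loops $\gamma_0,\gamma_1$ that are homotopic rel endpoints, I would choose a homotopy $H\colon I\times I\to X$ with $H(\cdot,0)=\gamma_0$, $H(\cdot,1)=\gamma_1$ and $H(0,s)=H(1,s)=x_0$ for all $s$. As $I\times I$ is contractible, the pulled-back bundle $H^*E$ is trivial, and I would fix a global trivialization of it that restricts to $\phi_U$ along the two constant edges $\{0\}\times I$ and $\{1\}\times I$. For each $s$ the restriction of this trivialization to $I\times\{s\}$ trivializes $\gamma_s^*E$, and reading off the endpoint discrepancy against $\phi_U$ produces an element $g(s)\in G$ that realizes the monodromy of $\gamma_s$. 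Since $g(s)$ is assembled from the transition functions and the chosen charts, the map $s\mapsto g(s)$ is continuous, so it is a path in $G$ joining $g(0)$ to $g(1)$; therefore $[g(0)]=[g(1)]$ in $\pi_0(G)$, which is exactly homotopy invariance.

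The step I expect to be the main obstacle is precisely the continuity of $s\mapsto g(s)$. One must organise the cover of $I\times I$ by trivialization patches, together with the associated transition functions, so that the endpoint comparison varies continuously with the homotopy parameter, while simultaneously pinning the basepoint trivialization $\phi_U$ along the constant edges so that the rel-endpoint hypothesis is genuinely used. Once the trivialization of $H^*E$ over the contractible square is set up coherently, the continuity is formal and the passage to $\pi_0(G)$ is immediate; the real care lies in constructing that coherent trivialization rather than merely invoking contractibility.
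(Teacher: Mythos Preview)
Your argument is correct and in fact considerably more detailed than what the paper offers: the paper gives no proof of this proposition at all, merely introducing it as ``the natural consequence of Lemma \ref{nondep} that we care about'' and then stating it. In other words, the authors treat well-definedness as essentially immediate once independence from the base chart $\phi_U$ is established.

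Your proposal goes further in two respects. First, you explicitly isolate and handle the dependence on the choice of lift $\tilde\gamma$ via a gauge-transformation argument, which the paper does not mention. Second, you supply a genuine proof of homotopy invariance by trivializing over the contractible square $I\times I$ and tracking the continuous variation $s\mapsto g(s)$; the paper is silent on this point as well. Both steps are standard and your treatment of them is sound; the only caveat is that your concern about the ``main obstacle'' (continuity of $s\mapsto g(s)$) is perhaps overstated, since once a global trivialization of $H^*E$ is fixed the endpoint comparison is manifestly continuous in $s$. So your proof is correct, and the difference from the paper is simply that you have written out what the authors left implicit.
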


\bibliography{bibtrigo}

\begin{thebibliography}{10}

\bibitem{avgs}
V.~I. Arnold, S.~M. Gusein-Zade, and A.~N. Varchenko.
\newblock {\em Singularities of differentiable maps. {V}olume 1}.
\newblock Modern Birkh\"auser Classics. Birkh\"auser/Springer, New York, 2012.
\newblock Classification of critical points, caustics and wave fronts,
  Translated from the Russian by Ian Porteous based on a previous translation
  by Mark Reynolds, Reprint of the 1985 edition.

\bibitem{arsie-vistoli}
A.~Arsie and A.~Vistoli.
\newblock Stacks of cyclic covers of projective spaces.
\newblock {\em Compos. Math.}, 140(3):647--666, 2004.

\bibitem{bw}
J.~S. Birman and B.~Wajnryb.
\newblock {$3$}-fold branched coverings and the mapping class group of a
  surface.
\newblock In {\em Geometry and topology ({C}ollege {P}ark, {M}d., 1983/84)},
  volume 1167 of {\em Lecture Notes in Math.}, pages 24--46. Springer, Berlin,
  1985.

\bibitem{bolognesi-vistoli}
M.~Bolognesi and A.~Vistoli.
\newblock Stacks of trigonal curves.
\newblock {\em Trans. Amer. Math. Soc.}, 364(7):3365--3393, 2012.

\bibitem{bh}
R.~Brown and S.~P. Humphries.
\newblock Orbits under symplectic transvections. {II}. {T}he case {$K={\bf
  F}_2$}.
\newblock {\em Proc. London Math. Soc. (3)}, 52(3):532--556, 1986.

\bibitem{ciliberto-flamini}
C.~Ciliberto and F.~Flamini.
\newblock On the branch curve of a general projection of a surface to a plane.
\newblock {\em Trans. Amer. Math. Soc.}, 363(7):3457--3471, 2011.

\bibitem{cmt}
C.~Ciliberto, R.~Miranda, and M.~Teicher.
\newblock Pillow degenerations of {$K3$} surfaces.
\newblock In {\em Applications of algebraic geometry to coding theory, physics
  and computation ({E}ilat, 2001)}, volume~36 of {\em NATO Sci. Ser. II Math.
  Phys. Chem.}, pages 53--63. Kluwer Acad. Publ., Dordrecht, 2001.

\bibitem{dolga-ligbo}
I.~Dolgachev and A.~Libgober.
\newblock On the fundamental group of the complement to a discriminant variety.
\newblock In {\em Algebraic geometry ({C}hicago, {I}ll., 1980)}, volume 862 of
  {\em Lecture Notes in Math.}, pages 1--25. Springer, Berlin-New York, 1981.

\bibitem{eb-quad-form}
W.~Ebeling.
\newblock Quadratische {F}ormen und {M}onodromiegruppen von {S}ingularit\"aten.
\newblock {\em Math. Ann.}, 255(4):463--498, 1981.

\bibitem{edidin-graham}
D.~Edidin and W.~Graham.
\newblock Equivariant intersection theory.
\newblock {\em Invent. Math.}, 131(3):595--634, 1998.

\bibitem{fm}
B.~Farb and D.~Margalit.
\newblock {\em A primer on mapping class groups}, volume~49 of {\em Princeton
  Mathematical Series}.
\newblock Princeton University Press, Princeton, NJ, 2012.

\bibitem{viviani-fulghesu}
D.~Fulghesu and F.~Viviani.
\newblock The {C}how ring of the stack of cyclic covers of the projective line.
\newblock {\em Ann. Inst. Fourier (Grenoble)}, 61(6):2249--2275 (2012), 2011.

\bibitem{harris-morrison}
J.~Harris and I.~Morrison.
\newblock {\em Moduli of curves}, volume 187 of {\em Graduate Texts in
  Mathematics}.
\newblock Springer-Verlag, New York, 1998.

\bibitem{hl}
A.~Hefez and F.~Lazzeri.
\newblock The intersection matrix of {B}rieskorn singularities.
\newblock {\em Invent. Math.}, 25:143--157, 1974.

\bibitem{hilden}
H.~M. Hilden.
\newblock Three-fold branched coverings of {$S^{3}$}.
\newblock {\em Amer. J. Math.}, 98(4):989--997, 1976.

\bibitem{j1}
W.~A.~M. Janssen.
\newblock Skew-symmetric vanishing lattices and their monodromy groups.
\newblock {\em Math. Ann.}, 266(1):115--133, 1983.

\bibitem{j2}
W.~A.~M. Janssen.
\newblock Skew-symmetric vanishing lattices and their monodromy groups. {II}.
\newblock {\em Math. Ann.}, 272(1):17--22, 1985.

\bibitem{lib-cubic}
A.~Libgober.
\newblock On the fundamental group of the space of cubic surfaces.
\newblock {\em Math. Z.}, 162(1):63--67, 1978.

\bibitem{loenne-duke}
M.~L{\"o}nne.
\newblock Fundamental groups of projective discriminant complements.
\newblock {\em Duke Math. J.}, 150(2):357--405, 2009.

\bibitem{loenne-weierstrass}
M.~L\"{o}nne.
\newblock Fundamental groups of moduli stacks of smooth weierstrass fibrations.
\newblock {\em (preprint arXiv:0712.3374)}, pages 1--38, (2012).

\bibitem{loo-artin}
E.~Looijenga.
\newblock Artin groups and the fundamental groups of some moduli spaces.
\newblock {\em J. Topol.}, 1(1):187--216, 2008.

\bibitem{maroni}
A.~Maroni.
\newblock Le serie lineari speciali sulle curve trigonali.
\newblock {\em Ann. Mat. Pura Appl. (4)}, 25:343--354, 1946.

\bibitem{miranda}
R.~Miranda.
\newblock Triple covers in algebraic geometry.
\newblock {\em Amer. J. Math.}, 107(5):1123--1158, 1985.

\bibitem{ph}
F.~Pham.
\newblock Formules de {P}icard-{L}efschetz g\'en\'eralis\'ees et ramification
  des int\'egrales.
\newblock {\em Bull. Soc. Math. France}, 93:333--367, 1965.

\bibitem{shimada-dual}
I.~Shimada.
\newblock Singularities of dual varieties in characteristic 3.
\newblock {\em Geom. Dedicata}, 120:141--177, 2006.

\bibitem{shimada}
I.~Shimada.
\newblock Generalized {Z}ariski-van {K}ampen theorem and its application to
  {G}rassmannian dual varieties.
\newblock {\em Internat. J. Math.}, 21(5):591--637, 2010.

\bibitem{tyom}
I.~Tyomkin.
\newblock On {S}everi varieties on {H}irzebruch surfaces.
\newblock {\em Int. Math. Res. Not. IMRN}, (23):Art. ID rnm109, 31, 2007.

\bibitem{vistoli-m2}
A.~Vistoli.
\newblock The chow ring of $\mathcal{M}_2$.
\newblock {\em Invent. Math}, 131(3):635--644, 1998.
\newblock (Appendix to "Equivariant intersection theory'' by D. Edidin and W.
  Graham. ).

\bibitem{zariski-poincare}
O.~Zariski.
\newblock On the {P}oincar\'e {G}roup of {R}ational {P}lane {C}urves.
\newblock {\em Amer. J. Math.}, 58(3):607--619, 1936.

\end{thebibliography}

Michele Bolognesi, Institut de Recherche Math\'ematique de Rennes, 
Universit\'e de Rennes 1 \\ %
263 Avenue du G\'en\'eral Leclerc, 35042 Rennes Cedex, FRANCE.\\
 \hfill \texttt{\it E-mail: \rm michele.bolognesi@univ-rennes1.fr}

\smallskip

Michael L\"onne, Institut f\"ur Algebraische Geometrie, 
Gottfried Wilhelm Leibniz Universit\"at Hannover \\ 
Welfengarten 1, 30167 Hannover, GERMANY.\\
\hfill \texttt{\it E-mail: \rm loenne@math.uni-hannover.de}

\end{document}